\documentclass{article}
\usepackage{arxiv}

\usepackage{graphicx} % Required for inserting images
\usepackage{quiver}
\usepackage{bussproofs}
\usepackage{tikz-cd}
\usepackage{amsmath}
\usepackage{mathrsfs}
\usepackage{bm}
\usepackage{amsthm}
\usepackage{mathtools}
\usepackage{xcolor}
\usepackage[utf8]{inputenc}

\usepackage{hyperref}
\hypersetup{
    colorlinks,
    linkcolor={red!50!black},
    citecolor={blue!50!black},
    urlcolor={blue!80!black}
}
\usepackage[capitalize,nameinlink]{cleveref}
\Crefname{ex}{Example}{Examples}
\Crefname{pr}{Proof}{Proofs}

\theoremstyle{definition}
\newtheorem{definition}{Definition}[section]

\newtheorem{lemma}{Lemma}[section]

\newtheorem{proposition}{Proposition}[section]

\DeclareMathOperator{\parr}{\rotatebox[origin=c]{180}{\&}}

\newcommand{\B}{\mathfrak{B}}

\newcommand{\N}{\mathbb{N}}

\newcommand{\R}{\mathbb{R}}
\newcommand{\C}{\mathbb{C}}

\newcommand{\cP}{\mathcal{P}}

\newcommand{\cQ}{\mathcal{Q}}

\newcommand{\cX}{\mathcal{X}}
\newcommand{\cY}{\mathcal{Y}}
\newcommand{\cZ}{\mathcal{Z}}

\newcommand{\cat}[1]{\mathsf{#1}}

\newcommand{\dom}{\operatorname{dom}}

\newcommand{\cod}{\operatorname{cod}}
\newcommand{\id}{\operatorname{id}}

\newcommand{\RSR}{\operatorname{RSR}}
\newcommand{\im}{\operatorname{im}}
\newcommand{\bbrack}[1]{[\![ #1 ]\!]}
\DeclareMathAlphabet{\mathcal}{OMS}{cmsy}{m}{n}
\newcommand{\adj}{\underset{\leftarrow}{\overset{\rightarrow}{\scriptstyle \bot}}}

\date{}
\renewcommand{\headeright}{}
\renewcommand{\undertitle}{}

\title{A Category-theoretic Reconstruction of Logical Expressivism}
\author{ \href{https://orcid.org/0000-0002-9374-9138}{\includegraphics[scale=0.06]{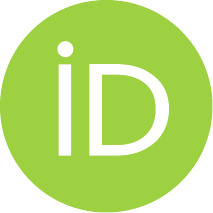}\hspace{1mm}Kristopher Brown} \\
	Topos Institute\\
	\texttt{kris@topos.institute} \\
}% \date{\vspace{-5ex}}

\begin{document}

\maketitle

% \tableofcontents

\vspace{-3mm}

\begin{abstract}
In the philosophical tradition of `analytic pragmatism', which attempts to account for linguistic meanings in terms of their practices of use, logical expressivism is a theory which offers a distinct perspective on logic. 
We shed light on Brandom and Hlobil's recent formalization of logical expressivism by reconstructing its formal semantics through the use of universal constructions. 
This reveals similarities with categorical logic: there is a focus on internalizing judgment structure, and connectives arise from adjunctions.
\end{abstract}

\noindent {\bf Notation:} We denote multisets of elements from some set $X$ with $\N[X]$, whereas the power set of $X$ is $\cP[X]$. Singletons and unions of multisets are, as with sets, denoted with $\{-\}$ and $\cup$ respectively.
We liberally apply the bijections $\N[X+X]\cong \N[X]\times \N[X]$ and $\cP[X+X]\cong\cP[X]\times \cP[X]$ implicitly. 
Functor compositions like $GF$ are shorthand for $G \circ F$, though functors (and morphisms in general) are sometimes composed in diagrammatic order, written $F\cdot G$. 
At times, pairs will be written $\langle -,-\rangle$ rather than $(-,-)$, especially if the elements inside contain further pairs. $\sigma$ denotes the swap operation on pairs. All mentioned monoids and quantales will be commutative and unital. The categories of quantales, monoids, and preordered monoids are respectively denoted $\cat{Quant}$, $\cat{CMon}$, and $\cat{PreOrdCMon}$. Given a preorder $\cX:=(X,\leq)$, let $\cX^\downarrow$ be the lattice of lower sets of $\cX$ ordered by $\subseteq$ whose {\it principal} lower sets are of the form $x^\downarrow:=\{x' \in X\ |\ x'\leq x\}$. Lastly, we warn that $\bot$ and $(-)^\bot$ are overloaded notations which have distinct but related meanings in the categories we will encounter. 

\vspace{-4mm}
\section{Background: inferentialism and logical expressivism}
\label{sec:back}

\vspace{-3mm}

In this section, we briefly summarize some concepts from {\it Reasons for Logic, Logic for Reasons}~\cite{hlobil2025reasons}. 
We can understand a domain of reasoning as having claims (that which can be asserted in that domain's vocabulary) and a notion of consequence, i.e. of some claims following from other claims. 
Example domains could range from `group theory' to `the game of chess' to `18th century maritime law'. 
A special class of examples comes from formal logics (e.g. classical, linear, paraconsistent) and their consequence relations. 
These consequence relations, despite their diversity, share some common structure.
For example, the consequence {\it operator}, which takes a set of premises to its set of consequences, is often assumed to satisfy the Tarskian conditions of reflexivity, monotonicity, and idempotence. 
From the proof-theoretic perspective, such constraints respectively correspond to structural rules of identity, weakening, and cut.
The formalization of a domain, i.e. the encoding of its claims within a formal system such that its consequence relation coincides with the logical consequence relation, is a powerful tool if successfully accomplished.
This is because weakening allows for portability of conclusions in different contexts, and cut allows for composability of reasoning. 
Thus it is a remarkable achievement that we have suitable logical encodings for some domains: for example, quantum logic for statements about  quantum-mechanical observables \cite{birkhoff1975logic}, intuitionistic logic for constructive mathematics \cite{dummett2000elements}, linear logic for resource-sensitive reasoning \cite{girard}. 

Inspired by the success of formalization of (certain aspects of) scientific reasoning, one could take the view that good reasoning always has this form: insofar as we genuinely mean something by `It is a dog' and `It is a mammal', and the former is a reason for the latter, then it must be the case that there must exist {\it some} encoding in {\it some} logic that is able to recover this inference as a logical consequence.
Russell's {\it On Denoting} \cite{russell1905denoting} offers an example of logical analysis of ordinary language expressions like ``the present King of France'' using classical predicate logic; however, one can reject his analysis on the basis of it {\it not} reproducing our ordinary inferences: e.g. he would be forced to endorse ``The present King of France is bald'' is a good reason for ``The moon is made of cheese'', as his analysis of the antecedent is that it is meaningful but false. There are many ways to react to this failure: one could seek a different encoding in classical logic, one can seek to develop new kinds of logics (e.g. free logics) to deal with `non-existent entities', and, lastly, we emphasize one could reject the attempt to ground the domain of interest in logical reasoning. It may be the case that legal, ethical, and medical reasoning do not satisfy the Tarskian constraints.\footnote{
Of the three constraints characteristic of logical consequence, weakening is the easiest to drop.
Categorical logic, by moving from cartesian closed categories to symmetric monoidal closed categories, allows for viewing weakening as a property rather than assumed structure; however, identity and cut remain fixed.
We note that rejection of cut is one way to make sense of logical paradoxes (such as the liar sentence)~\cite{ripley2013paradoxes}, and there many ordinary language (non-paradoxical) examples~\cite{simonelli2023bringing}.}

In contrast to this logico-semantic approach to meaning, {\it inferentialism} about meaning is the thesis that we should firstly think of the meaning of a sentence in terms of its good inferences and secondarily (if at all) think of it as represented by a formal term in some deductive calculus or an element in some semantic domain; this is the opposite order of explanation relative to the story explained above \cite{brandom1994making}. In this work, we focus on the formal treatment of inferentialism of \cite{hlobil2025reasons}, noting there exist other formal treatments, too \cite{incurvati2023reasoning,peregrin2014inferentialism}. We {\it begin} with some norm governing good judgments involving atomic sentences. 

\begin{definition}
\label{def:signedif}
A {\it signed incompatibility frame} is a set $X$ of propositional atoms, equipped with a subset $\bot\subseteq \N[X+X]$. {\it Idempotent} signed incompatibility frames are sets $X$ equipped with $\bot \subseteq \cP[X+X]$. Elements of $\N[X+X]$ (or $\cP[X+X]$ in the idempotent case) are called {\it positions}, and $\bot$ is the subset of {\it incompatible} positions.
\end{definition}

We equivalently refer to a signed incompatibility frame as an {\it implication frame} because we often interpret a position $(\Gamma,\Delta)$ as a candidate implication $\Gamma \vdash \Delta$, with sets (or multisets) of atoms on each side of the turnstile \cite[Def. 62]{hlobil2025reasons}. Such a candidate implication may or may not obtain, according to the frame; the `good implications', i.e. the ones which obtain, are the elements of $\bot$.\footnote{Following \cite{restall2005multiple}, a {\it bilateralist} reading of the turnstile $\Gamma\vdash \Delta$ is that `It is normatively out-of-bounds' to {\it accept} everything in $\Gamma$ and {\it reject} everything in $\Delta$. Interpreting sequents in this way (where the left hand side represents accepted claims, and the right hand side represents simultaneously rejected claims) explains why an `incompatible' position (i.e. out-of-bounds) is a `good' implication ($\Delta$ follows if you cannot reject it while accepting $\Gamma$).} For example, $a,b\vdash a, c$ is notation for $(\{a,b\},\{a,c\})\in \bot$ and  $a,b\nvdash c$ is notation for $(\{a,b\},\{c\})\notin \bot$. Idempotent frames implicitly enforce contraction as a structural rule, and both frames implicitly enforce exchange.

% In that context, the turnstile $\Gamma \vdash \Delta$ is taken to mean ``Acceptance of $\Gamma$ and rejection of $\Delta$ is out of bounds.'' 
% Thus the structural rule of cut can be read as the enforcing that {\it all} contexts ($\Gamma$ and $\Delta$) which are incompatible with both assertion of $A$ and denial of $A$ (i.e. $\Gamma, A\vdash \Delta$ and $\Gamma\vdash A,\Delta$ both obtain) are themselves incompatible (i.e. $\Gamma \vdash \Delta$ obtains). 
% We would prefer to be able to articulate when excluded-middle-like property holds with logical vocabulary rather than implicitly demand it of all propositional atoms.

Thinking of an implication frame's turnstile as `raw, input data' leads to {\it logical expressivism}, which takes the concept of inferences being `simply' good (as in an implication frame) to be explanatorily-prior to the concept of logically-good inferences \cite{brandom2018logical}. This can be formalized via the sequent rules of non-monotonic, multi-succedent logic (NMMS, \cite{hlobil2018shop,kaplan2018nmms}), seen in~\cref{fig:seq}, which derive the goodness of propositions built from logical connectives. Adding logical connectives expands a vocabulary from $X$ to ${\rm BF}(X)$, the set of Boolean formulas on propositional atoms $X$. Because NMMS rules are bidirectional, any sequent which involves logically-complex propositions is tantamount to (a conjunction of) statements involving fewer logical connectives and, ultimately, to an assertion that some particular set of sequents is contained within $\bot$. We can understand the new, logically complex propositions as being descriptions of $\bot$. This is the idea of logical expressivism: that the characteristic function of logic is to express features of some antecedent, prelogical system of implications. Here `express' means to make explicit, i.e. internalize, features of $\bot$ that, were originally only describable in a metavocabulary.\footnote{In \cite[Ex. 1]{kaplan2018nmms}, it is shown that, in a radically substructural setting, the $\otimes$L and $\otimes$R rules of linear logic are {\it not} expressive in this sense: one can derive $p\otimes q \vdash p\otimes q$ from atomic sequents in two different ways, such that its assertion does not determinately tell us something about $\bot\subseteq \N[X+X]$. It can be derived starting from $(p\vdash p)$ and $(q\vdash q)$ {\it or} starting from $(p,q\vdash q)$ and $(\vdash p)$. } 

\begin{figure}

%\hspace{-2mm}
\begin{minipage}{0.17\linewidth}
\begin{center}\boxed{
\doubleLine
\RightLabel{$\neg$R}
\AxiomC{$\Gamma, a\vdash \Delta$}
\UnaryInfC{$\Gamma \vdash \neg a,\Delta$}
\DisplayProof}
\end{center}
\end{minipage}
\begin{minipage}{0.17\linewidth}
\begin{center}\boxed{
\doubleLine
\RightLabel{$\neg$L}
\AxiomC{$\Gamma\vdash a, \Delta$}
\UnaryInfC{$\Gamma, \neg a \vdash \Delta$}
\DisplayProof}
\end{center}
\end{minipage}
\begin{minipage}{0.25\linewidth}
\begin{center}\boxed{
\doubleLine
\RightLabel{$\wedge$R}
\insertBetweenHyps{\hskip -2pt}
\AxiomC{$\Gamma\vdash a,\Delta$}
\AxiomC{$\Gamma\vdash b,\Delta$}
\BinaryInfC{$\Gamma \vdash a\wedge b,\Delta $}
\DisplayProof}
\end{center}
\end{minipage}
\begin{minipage}{0.18\linewidth}
\begin{center}\boxed{
\doubleLine
\RightLabel{$\wedge$L}
\AxiomC{$\Gamma, a,b \vdash \Delta$}
\UnaryInfC{$\Gamma,a\wedge b \vdash \Delta $}
\DisplayProof}
\end{center}
\end{minipage}
\begin{minipage}{0.2\linewidth}
\begin{center}\boxed{
\doubleLine
\RightLabel{$\vee$R}
\AxiomC{$\Gamma\vdash a,b,\Delta$}
\UnaryInfC{$\Gamma \vdash a\vee b,\Delta $}
\DisplayProof}
\end{center}
\end{minipage}
\vspace{2mm}

\begin{minipage}{0.25\linewidth}
\begin{center}\boxed{
\doubleLine
\RightLabel{$\vee$L}
\insertBetweenHyps{\hskip -2pt}
\AxiomC{$\Gamma,a\vdash \Delta$}
\AxiomC{$\Gamma,b\vdash \Delta$}
\BinaryInfC{$\Gamma,a\vee b \vdash \Delta $}
\DisplayProof}
\end{center}
\end{minipage}
\begin{minipage}{0.37\linewidth}
\begin{center}\boxed{
\doubleLine
\RightLabel{$\wedge{\rm R^c}$}
\insertBetweenHyps{\hskip -2pt}
\AxiomC{$\Gamma\vdash a,\Delta$}
\AxiomC{$\Gamma\vdash b,\Delta$}
\AxiomC{$\Gamma\vdash a,b,\Delta$}
\TrinaryInfC{$\Gamma \vdash a\wedge b,\Delta $}
\DisplayProof}
\end{center}
\end{minipage}
\begin{minipage}{0.36\linewidth}
\begin{center}\boxed{
\doubleLine
\RightLabel{$\vee{\rm L^c}$}
\insertBetweenHyps{\hskip -2pt}
\AxiomC{$\Gamma, a\vdash \Delta$}
\AxiomC{$\Gamma,b\vdash \Delta$}
\AxiomC{$\Gamma,a,b\vdash\Delta$}
\TrinaryInfC{$\Gamma,  a\vee b \vdash\Delta $}
\DisplayProof}
\end{center}
\end{minipage}
\caption{Sequent rules for NMMS, with contractive (demarcated by superscript $^{\rm c}$) and non-contractive variants for $\wedge$R and $\vee$L rules. All other rules are considered both contractive and non-contractive.
}
\label{fig:seq}
\end{figure}

\begin{definition}
\label{def:elab}
{\it Logical elaboration} is the mapping, from implication frames to implication frames, which sends some $(X,\bot\subseteq \N[X+X])$ to $({\rm BF}(X),\bot'\subseteq \N[{\rm BF}(X)+{\rm BF}(X)])$, with $\bot'$ characterized via the noncontractive rules of \cref{fig:seq}. We can also logically elaborate idempotent implication frames $(X,\bot\subseteq \cP[X+X])$, sending them to ${({\rm BF}(X),\bot'\subseteq \cP[{\rm BF}(X)+{\rm BF}(X)])}$ using the contractive rules of \cref{fig:seq} for deriving $\bot'$. 
\end{definition}

The `implication space semantics' is defined relative to a choice of an implication frame, ${\cX:=(X,\bot)}$, with \cref{sec:exidem,sec:exnonidem} as examples. At its core is the `range of subjunctive robustness' operation~\cite[Def. 63]{hlobil2025reasons}, 
which sends $\{\Gamma\vdash \Delta\}$ to the set of contexts which would be good implications if $\Gamma$ were added to the premises and $\Delta$ to the conclusions. 

\begin{definition}
  \label{def:rsr}
  Given $\cX$, the {\it range of subjunctive robustness} function, $\RSR\colon {\cP[\N[X+X]]\to\cP[\N[X+X]]}$ (or ${\cP[\cP[X+X]]\to\cP[\cP[X+X]]}$ in the idempotent setting), sends a set of candidate implications, e.g. $\{(\Gamma_1,\Delta_1),...,(\Gamma_i,\Delta_i)\}$, to the set ${\{(\Theta,\Omega)\ |\ \forall i\colon (\Gamma_i\cup \Theta,\Delta_i\cup \Omega) \in \bot\}}$. We will abbreviate $\RSR(A)$ as $A^\bot$.
\end{definition}

This operation allows us to define implicational roles \cite[Def. 65]{hlobil2025reasons}.

\begin{definition}
  \label{def:rolcont}
  An {\it implicational role} is the range of subjunctive robustness for some set of candidate implications. For some background frame $\cX$, we denote the set of roles with $\R:=\im(\RSR)$. 
\end{definition}

$\cX$ induces `symjunction' and `adjunction' operations to combine roles in $\R$~\cite[Defs. 67,69]{hlobil2025reasons}.\footnote{No intended connection to the category-theoretic notion of adjunction.} 

\begin{definition}
  Given some background $\cX$, the {\it symjunction} of roles $A,B \in \R$ is $A \sqcap B:=(A\cup B)^{\bot\bot}$. Their {\it adjunction} is $A\sqcup B:=\{(\Gamma\cup \Gamma',\ \Delta\cup\Delta')\ |\ (\Gamma,\Delta) \in A,\ (\Gamma',\Delta') \in B \}^{\bot\bot}$.  
\end{definition}

Implicational roles allow us to define conceptual contents \cite[Def. 66]{hlobil2025reasons}.

\begin{definition}
  A {\it conceptual content} is a pair of roles, and we denote the set of conceptual contents with $\C:=\R^2$. Let $\bbrack{A}:=\langle \texttt{a}_+,\ \texttt{a}_-\rangle$ and $\bbrack{B}:=\langle \texttt{b}_+,\ \texttt{b}_-\rangle$ be arbitrary conceptual contents in $\C$. We use the notation $\bbrack{-}^+=\pi_1(\bbrack{-})$ to pick out the {\it premisory role} of the conceptual content, and likewise  $\bbrack{-}^-=\pi_2(\bbrack{-})$ picks out the {\it conclusory role}.
\end{definition}

$\cX$ also induces a variety of operations on conceptual contents~\cite[Defs. 70, 88]{hlobil2025reasons}.

\begin{definition}[Semantic clauses for logical operators]
  \label{def:connectives}
\begin{align*}
  \bbrack{\neg A}&:=\langle \texttt{a}_-,\texttt{a}_+\rangle & \bbrack{A \wedge B}&:=\langle \texttt{a}_+\sqcup \texttt{b}_+,\ \texttt{a}_- \sqcap \texttt{b}_- \sqcap (\texttt{a}_-\sqcup \texttt{b}_-)\rangle \\ 
  \bbrack{A\vee B}&:=\bbrack{\neg(\neg A\wedge \neg B)} & \bbrack{A\to B}&:=\bbrack{\neg A\vee B} \\ 
  \bbrack{A \otimes B} &:= \langle \texttt{a}_+\sqcup \texttt{b}_+,\ (\texttt{a}_-^\bot \sqcup \texttt{b}_-^\bot)^\bot\rangle & \bbrack{A \oplus B} &:= {\langle \texttt{a}_+ \sqcap \texttt{b}_+,\ (\texttt{a}_-^\bot\sqcap  \texttt{b}_-^\bot)^\bot\rangle} \\
  \bbrack{A\parr B}&:=\bbrack{\neg(\neg A \otimes \neg B)}& \bbrack{A\& B}&:=\bbrack{\neg(\neg A \oplus \neg B)}
\end{align*}

\end{definition}

$\cX$ induces a semantic consequence relation on $\C$~\cite[Def. 68]{hlobil2025reasons}. 

\begin{definition}
  \label{def:semcons}
  Given (multi-)sets of conceptual contents $\vec{\texttt{A}}=\sum_i \langle\texttt{a}_{i+},\ \texttt{a}_{i-}\rangle$ and $\vec{\texttt{B}}=\sum_j \langle\texttt{b}_{j+},\ \texttt{b}_{j-}\rangle$, we define the semantic consequence relation $\vec{\texttt{A}}\vDash\vec{\texttt{B}} := (\bigsqcup_{i} \texttt{a}_{i+})\sqcup (\bigsqcup_{j}\texttt{b}_{j-}) \subseteq \bot$.
\end{definition}

Lastly, $\cX$ induces interpretations for its propositional atoms.

\begin{definition}
  \label{def:basecase}
  For all atoms $a \in X$, let $\bbrack{a}:=\langle \{(\{a\},\varnothing)\}^{\bot\bot},\ \{(\varnothing,\{a\})\}^{\bot\bot}\rangle$.
\end{definition}

To summarize the implication space semantics: starting with a set of propositional atoms and a (radically substructural) consequence relation $\vdash$ from an implication frame $\cX$, we induce a space $\C$ of semantic values, a semantic consequence relation $\vDash$, and an interpretation function for logical composites of the atoms. These choices of semantic values and clauses are shown in \cite[Ch. 5]{hlobil2025reasons} to have many noteworthy logical properties: for example, we can understand many flavors of propositional logic (classical, linear, K3, LP, ST, TS) in terms of conditions on our starting $\cX$.

However, despite strong philosophical motivations, there are many mathematical choices in the above formalism which may seem unusual or non-obvious to a traditional logician. We aim to show that these choices are natural insofar as they can be reconstructed via universal constructions, with the implication space semantics arising as the unit of an adjunction.

{\bf Related work:} In \cite{corfield2023type}, Corfield applies the logical expressivist interpretation of introduction and elimination rules to constructions in dependent type theory. 
In \cite{yetter1990quantales}, Yetter constructs Girard quantales from phase spaces, and in \cite{rosenthal1990note} Rosenthal shows every Girard quantale can be obtained in this way. 
In \cite{mitchell2001monoid}, the authors explore relationships between ordered commutative monoids and Girard quantales, although they do not go as far as defining a category of phase spaces. The generation of a *-autonomous category from a symmetric monoidal category with a distinguished object to serve as the dualizing object bears a {\it prima facie} connection to the Chu construction \cite{barr1991autonomous}; however, this construction fails to be relevant to reconstructing logical expressivism because the monoidal product is quite different from what is needed. Lastly, there are key differences between the present approach and traditional categorical logic that prevent a simple characterization of how they are related. In the categorical logic following \cite{lambek1988introduction}, propositions are objects of a category and derivations $\Gamma \vdash A$ are morphisms. In contrast, our setting is proof-irrelevant, and whether sequents $\Gamma \vdash A$ obtain or not is simply data which may (but need not) satisfy properties such as transitivity that would be automatic if derivations were morphisms in a category.

\section{Unsigned logic}
\label{sec:unsigned}
\vspace{-3mm}
\subsection{Defining categories of phase spaces and incompatibility frames}
\label{sec:defps}

In this section, we define categories of unsigned incompatibility frames and phase spaces. Phase spaces are a core component of Girard's phase semantics for linear logic~\cite[Sec 1]{girard1987linear}.

\begin{definition}
  \label{def:psoriginal}
A (commutative) {\it phase space} is a commutative monoid equipped with a distinguished subset. 
A phase space $\cX:=(X,+,0,\bot\subseteq X)$ has a natural $(-)^\bot$ operation on its elements, $a^\bot := \{x\ |\ x + a \in \bot\}$, as well as on subsets of its elements $A^\bot := \bigcap_{a \in A} a^\bot = \{x\ | \forall a \in A\colon x+a \in \bot\}$. 
This also leads to a natural order on its elements: $a \leq_\cX b := a^\bot \supseteq b^\bot$.
\end{definition}

\vspace{1mm}
A Girard quantale is a thin *-autonomous cocomplete category, i.e. a quantale with a dualizing object,~$\bot$.\footnote{Not to be confused with the bottom element of the quantale's lattice.} 
Phase semantics naturally associates a particular Girard quantale to any phase space~\cite{yetter1990quantales}.
\vspace{1mm}

\begin{definition}
\label{def:natgq}
Let $\cX:=(X,+,0,\bot\subseteq X)$ be a phase space implicitly ordered by $\leq_\cX$. 
This lattice has a closure operator: $(-)^{\bot\bot}$.\footnote{Note this is a quantic nucleus: see \cite{rosenthal1990note}.}
We define ${\rm Gir}(\cX)$ to be the {\it natural Girard quantale} of $\cX$: its elements are $(-)^{\bot\bot}$-closed subsets of $X$, ordered by subset inclusion, $A \otimes B := \{a+b\ |\ a \in A,\  b\in B\}^{\bot\bot}$, and with dualizing element~$\bot$. Joins are given by $A \vee B = (A\cup B)^{\bot\bot}$.
\end{definition}  

Note that $\bot$ is always an element of ${\rm Gir}(\cX)$ by \cite[Ex 1.5]{girard1987linear}. 
The closure induces a quotient $q_\cX\colon \cX^\downarrow \twoheadrightarrow {\rm Gir}(\cX)$ sending each set to its closure.  We now construct a category of phase spaces, starting with a category of Girard quantales.
\vspace{0.5mm}

\begin{definition}
  \label{def:gq}
Let $\cat{GQ}$ have Girard quantales as objects and, for morphisms $\cX\to \cY$, quantale homomorphisms which weakly preserve $\bot$, i.e. $f(\bot_\cX)\leq_\cY \bot_\cY$. 
\end{definition}

There is a forgetful functor $U^\bot\colon \cat{GQ\rightarrow Quant}$ which discards the structure of having a dualizing object. 
There is also a free functor $F^\vee\colon \cat{PreOrdCMon\rightarrow Quant}$: it freely adds joins to the underlying preorder and defines $A \otimes B := \{a+b\ |\ a \in A,\  b\in B\}^\downarrow$.\footnote{When our preordered commutative monoids are viewed as thin symmetric monoidal categories, this coincides with Day convolution (see \cite{im1986universal}, where it is denoted by $\mathscr{P}$). One reason we work with $\cat{PreOrdCMon}$ rather than $\cat{ThinSMC}$ is that the additional involution structure we will add this category (in \cref{def:invpremond}) does not admit a natural description as categorical structure on thin SMCs.}
The right adjoint $U^\vee\colon \cat{Quant\rightarrow PreOrdCMon}$ forgets the property of having all joins, and the hom-set bijection of $F^\vee\dashv U^\vee$ naturally associates to each quantale morphism $\phi\colon F^\vee(\cP)\rightarrow \cQ$ a corresponding $\cat{PreOrdCMon}$ morphism $\tilde \phi\colon \cP \rightarrow U^\vee(\cQ)$.

\vspace{2mm}

\begin{definition} 
  \label{def:ps}
Consider the comma category $F^\vee \downarrow U^\bot$ (with $\pi_{\cat{GQ}}$ as one of the projection functors) and let $\cat{PS}$ be its full subcategory restricted to the triples $(\cP,\ \cQ,\ \phi\colon F^\vee(\cP)\rightarrow U^\bot(\cQ))$, where $\phi$ is surjective and $\tilde \phi\colon \cP\rightarrow U^\vee U^\bot(\cQ)$ is an order embedding. Let $\iota_{\rm surj}\colon \cat{PS}\rightarrowtail F^\vee\downarrow U^\bot$ be the subcategory inclusion.
\end{definition}

\vspace{1mm}

\begin{lemma}
  \label{lemma:pscomma}
  There is a bijective correspondence between the objects of $\cat{PS}$ and phase spaces (\cref{def:psoriginal}), where $(X,+,0,\bot\subseteq X)$ is identified with ${((X,+,0,\leq_\cX),\ {\rm Gir}(\cX),\ q_\cX)}$. 
\end{lemma}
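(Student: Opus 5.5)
The plan is to establish the correspondence in two directions. First, every phase space gives a well-defined object of $\cat{PS}$ via the stated assignment. Second, every object of $\cat{PS}$ is isomorphic (in $F^\vee\downarrow U^\bot$, via an isomorphism whose $\cat{PreOrdCMon}$ component is the identity) to the image of a unique phase space. Since $\cQ$ in a general triple is an abstract Girard quantale, the correspondence can only be bijective up to such isomorphism. Injectivity of the assignment is immediate: $(X,+,0)$ is read off from the first component, and $\bot$ is read off as the dualizing element of ${\rm Gir}(\cX)$, which is literally the subset $\bot\subseteq X$.

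\textbf{Forward direction.} Let $\cX=(X,+,0,\bot)$ be a phase space.
\begin{enumerate}
\item $(X,+,0,\leq_\cX)$ is a preordered commutative monoid. The key identity is $x\in(b+c)^\bot \iff x+c\in b^\bot$. Hence $a^\bot\supseteq b^\bot$ implies $(a+c)^\bot\supseteq(b+c)^\bot$.
\item Every $(-)^{\bot\bot}$-closed set is a lower set for $\leq_\cX$. Indeed, if $x\le y\in A^{\bot\bot}$, then $x^\bot\supseteq y^\bot\supseteq A^\bot$, so $x\in A^{\bot\bot}$. Consequently $q_\cX\colon\cX^\downarrow\to{\rm Gir}(\cX)$ is surjective, since it fixes closed sets.
\item $q_\cX$ is a quantale homomorphism.
 \begin{itemize}
 \item Joins: $(\bigcup A_i)^{\bot\bot}=(\bigcup A_i^{\bot\bot})^{\bot\bot}$, which holds because $(\bigcup A_i)^\bot=\bigcap A_i^\bot=\bigcap A_i^{\bot\bot\bot}$.
 \item Tensor: $\{a+b\}^{\bot\bot}=\{a'+b' : a'\in A^{\bot\bot},\, b'\in B^{\bot\bot}\}^{\bot\bot}$. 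This is the nucleus property, proved by the same residuation identity as in step 1.
 \item Unit: $0^\downarrow$ is sent to $\{0\}^{\bot\bot}$, the unit of ${\rm Gir}(\cX)$.
 \end{itemize}
\item $\tilde q_\cX$ is an order embedding. We have $\tilde q_\cX(x)=(x^\downarrow)^{\bot\bot}=x^{\bot\bot}$. Moreover $x^{\bot\bot}\subseteq y^{\bot\bot}\iff x^\bot\supseteq y^\bot\iff x\le_\cX y$, using triple-negation.
\end{enumerate}

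\textbf{Backward direction.} Given $(\cP,\cQ,\phi)$ in $\cat{PS}$ with $\cP=(X,+,0,\le)$, define $\bot_X:=\{x\mid \tilde\phi(x)\le\bot_\cQ\}$. I must then show two things: that $\le$ coincides with $\le_{\cX}$ for this phase space, and that $\cQ\cong{\rm Gir}(\cX)$ compatibly with $\phi$ and $q_\cX$.

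\begin{enumerate}
\item Since $\tilde\phi$ is a monoid map, $y\in x^\bot$ iff $\tilde\phi(x)\otimes\tilde\phi(y)\le\bot_\cQ$.
\item Since $\tilde\phi$ is an order embedding, $x\le y$ iff $\tilde\phi x\le\tilde\phi y$.
\item In a Girard quantale, $\tilde\phi x\le\tilde\phi y$ holds iff every $q$ satisfies $q\otimes\tilde\phi y\le\bot \Rightarrow q\otimes\tilde\phi x\le\bot$. This is $p\le p'$ iff $p'^\bot\le p^\bot$.
\item By surjectivity of $\phi$, every $q$ is a join of elements $\tilde\phi(z)$, and $\otimes$ distributes over joins. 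So it suffices to test $q=\tilde\phi(z)$, giving exactly $x^\bot\supseteq y^\bot$.
\end{enumerate}

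The isomorphism $\cQ\to{\rm Gir}(\cX)$ is $q\mapsto\{x\mid\tilde\phi x\le q\}$, with inverse $A\mapsto\bigvee_{x\in A}\tilde\phi x$.
\begin{itemize}
\item The image is closed: write $q=q^{\bot\bot}$ in $\cQ$ and unwind as in step 4 above.
\item The two maps are mutually inverse by surjectivity of $\phi$.
\item The isomorphism preserves $\otimes$, joins and $\bot$, and it intertwines $\phi$ with $q_\cX$.
\end{itemize}

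The main obstacle is this last identification of $\cQ$ with ${\rm Gir}(\cX)$, specifically checking closedness and tensor preservation. The tool throughout is the density of $\tilde\phi(X)$ in $\cQ$ under joins combined with the dualizing property.
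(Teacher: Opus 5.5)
Your proposal is correct and follows the paper's proof essentially step for step. Your $\bot_X=\{x\mid\tilde\phi(x)\le\bot_\cQ\}$ is the paper's $\phi_*(\bot)$, your backward steps 1--4 are its {\bf L1} and order-recovery computation, and your isomorphism $q\mapsto\{x\mid\tilde\phi x\le q\}$ with inverse $A\mapsto\bigvee_{x\in A}\tilde\phi x$ is exactly the paper's pair $\phi_*,\phi$. You are more explicit than the paper that the correspondence holds only up to isomorphism of the $\cQ$ component, and that $q_\cX$ is a quantale homomorphism. One step needs more than you give it: surjectivity of $\phi$ only yields $\phi\phi_*=\id_\cQ$, so "mutually inverse" also needs $\phi_*\phi(A)=A^{\bot\bot}$ for lower sets $A$, which the paper derives from {\bf L1}, density and the dualizing property.
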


\vspace{1mm}
\hspace{0mm}
\begin{minipage}{0.75\linewidth}
 By characterizing phase spaces as in \cref{def:ps}, we obtain a notion of {\it morphism} of phase spaces. 
 To concretely consider a morphism $(\cP,\cQ,\phi)\rightarrow (\cP',\cQ',\phi')$ in $\cat{PS}$: this is a preordered monoid morphism $f\colon \cP\rightarrow \cP'$ and a Girard quantale morphism $g\colon \cQ\rightarrow \cQ'$ such that the square on the right commutes:
\end{minipage}
\hspace{0mm}\begin{minipage}{0.24\linewidth}
\vspace{-3mm}
\[\begin{tikzcd}[cramped]
	{F^\vee(\mathcal{P})} & {\mathcal{Q}} \\
	{F^\vee(\mathcal{P}')} & {\mathcal{Q'}}
	\arrow["\phi", two heads, from=1-1, to=1-2]
	\arrow["{{F^\vee(f)}}"', from=1-1, to=2-1]
	\arrow["g", from=1-2, to=2-2]
	\arrow["{{\phi'}}"', two heads, from=2-1, to=2-2]
\end{tikzcd}\]
\end{minipage}
\vspace{0mm}

Because $\phi$ and $\phi'$ are surjective, $g$ is fully determined by $f$, so we can think of morphisms just as those preordered monoid maps $\cP\rightarrow \cP'$ that induce a quantale morphism $F^\vee(\cP)^{\bot\bot}\rightarrow F^\vee(\cP')^{\bot\bot}$. 
To express this constraint more concretely, note $g$ is monotone and weakly preserves $\bot$, thus $f(\bot)\subseteq \bot'$. We get another constraint from requiring the square to commute: 
for every lower set $A \subseteq P$, we need $f(A)^{\bot'\bot'} = f(A^{\bot\bot})^{\bot'\bot'}$, a condition we call {\it continuity}.

\vspace{1mm}

\begin{lemma}
\label{lemma:continuityequiv}

Continuity of a function $f:X \to Y$ between phase spaces $(X,+,0_X,\bot)$ and $(Y,+,0_Y,\bot')$ is equivalently stated as $\forall A,B\subseteq X\colon A^\bot\subseteq B^\bot \implies f(A)^{\bot'} \subseteq f(B)^{\bot'}$.\footnote{In \cref{lemma:continuityequiv} we additionally show this is equivalent to ${\forall A\subseteq X\colon} f(A^{\bot_\cX\bot_\cX})\subseteq f(A)^{\bot_\cY\bot_\cY}$. We call this condition {\it continuity} because, in topological terms, maps for which the image of the closure is contained in the closure of the image are called {\it continuous} maps \cite[Def 16.A.1]{vcech1966topological}. }
\end{lemma}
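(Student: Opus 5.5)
We prove the statement by establishing a chain of equivalences between three conditions on $f$:
\[
(\mathrm{C})\ \forall A\colon f(A)^{\bot'\bot'} = f(A^{\bot\bot})^{\bot'\bot'},\qquad
(\mathrm{C}')\ \forall A\colon f(A^{\bot\bot})\subseteq f(A)^{\bot'\bot'},\qquad
(\mathrm{M})\ \forall A,B\colon A^\bot\subseteq B^\bot \implies f(A)^{\bot'}\subseteq f(B)^{\bot'}.
\]
Here (C) is the continuity condition stated for lower sets. Since $(-)^{\bot\bot}$ of any subset equals that of its lower closure, we can state it for arbitrary subsets. Throughout we use only the standard Galois-connection facts for $(-)^\bot$. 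It is antitone: $A\subseteq B\Rightarrow B^\bot\subseteq A^\bot$. It satisfies $A\subseteq A^{\bot\bot}$ and $A^{\bot\bot\bot}=A^\bot$. Consequently $A^\bot\subseteq B^\bot$ iff $B\subseteq A^{\bot\bot}$ iff $B^{\bot\bot}\subseteq A^{\bot\bot}$. These identities hold in any phase space and need no monoid structure.

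\textbf{(C)$\Leftrightarrow$(C$'$).} Since $A\subseteq A^{\bot\bot}$, we have $f(A)\subseteq f(A^{\bot\bot})$ and hence $f(A)^{\bot'\bot'}\subseteq f(A^{\bot\bot})^{\bot'\bot'}$ always. So (C) reduces to the reverse inclusion $f(A^{\bot\bot})^{\bot'\bot'}\subseteq f(A)^{\bot'\bot'}$. Because the right-hand side is closed, this is equivalent to $f(A^{\bot\bot})\subseteq f(A)^{\bot'\bot'}$, which is (C$'$).

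\textbf{(C$'$)$\Rightarrow$(M).} Assume $A^\bot\subseteq B^\bot$. Then $B\subseteq A^{\bot\bot}$, so
\[
f(B)\subseteq f(A^{\bot\bot})\subseteq f(A)^{\bot'\bot'}.
\]
Applying $(-)^{\bot'}$ and using $A^{\bot\bot\bot}=A^\bot$ gives $f(A)^{\bot'}\subseteq f(B)^{\bot'}$.

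\textbf{(M)$\Rightarrow$(C$'$).} Given $A$, let $x\in A^{\bot\bot}$ and take $B=\{x\}$. Since $\{x\}\subseteq A^{\bot\bot}$, we get $A^\bot\subseteq\{x\}^\bot$. Then (M) yields $f(A)^{\bot'}\subseteq \{f(x)\}^{\bot'}$, which means $f(x)\in f(A)^{\bot'\bot'}$. As $x$ was arbitrary, $f(A^{\bot\bot})\subseteq f(A)^{\bot'\bot'}$.

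There is no serious obstacle. The only point needing care is the bookkeeping between lower sets and arbitrary subsets in the original continuity condition. We handle it by noting that $A^\bot$ is unchanged when $A$ is replaced by its down-closure under $\leq_\cX$: if $a'\leq a$ then $a^\bot\subseteq a'^\bot$. Hence quantifying over all subsets is equivalent to quantifying over lower sets. We should also check that $f$ is monotone, so that $f$ of a lower set makes sense inside $F^\vee$, although the closures absorb this anyway.
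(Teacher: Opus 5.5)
Your proof is correct. Your step (C)$\Leftrightarrow$(C$'$) is the same as the paper's $1\Leftrightarrow 2$, but the rest of the cycle is organized differently. The paper does not go from (C$'$) to (M) directly. It inserts a third condition: the preimages $f^{-1}(y^{\bot_\cY})$ of the basic closed sets of $Y$ are $(-)^{\bot_\cX\bot_\cX}$-closed. It proves (C$'$)$\Rightarrow$(3)$\Rightarrow$(M), then closes the loop with (M)$\Rightarrow$(C$'$) by taking $B=A^{\bot\bot}$ in one stroke, where you argue pointwise with $B=\{x\}$. Your direct route is shorter and needs nothing beyond $A^\bot\subseteq B^\bot\iff B\subseteq A^{\bot\bot}$. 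The paper's detour costs a step but yields the extra "closed sets pull back to closed sets" characterization, which backs its topological reading of the word continuity. The two versions of (M)$\Rightarrow$(C$'$) are equally short.

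One correction to your last paragraph. The reduction from arbitrary subsets to lower sets does not follow from $A^\bot=(A^\downarrow)^\bot$ alone, and the closures do not absorb the issue. On the codomain side you also need $f(A^\downarrow)\subseteq f(A)^{\bot'\bot'}$, and that is exactly monotonicity of $f$: if $a'\leq_\cX a$ then $f(a)^{\bot'}\subseteq f(a')^{\bot'}$. Without monotonicity, the lower-set form of continuity is strictly weaker than (M). For example, if every lower set of $X$ is closed (as for $X=\{0,1\}$ under $\max$ with $\bot=\{0\}$), the lower-set form holds for every map, including the non-monotone swap of $0$ and $1$. By contrast, (M) forces monotonicity: take $A=\{a\}$, $B=\{a'\}$. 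In context this is harmless, since a $\cat{PS}$ morphism is by definition a $\cat{PreOrdCMon}$ map, so just cite that. The paper itself states all its conditions for arbitrary $A\subseteq X$ and passes over this point silently.
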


\vspace{1mm}

Let $U^\bot_+\colon \cat{PS\rightarrow CMon}$ forget the $\bot$ structure of a phase space, and let $F^+\colon\cat{Set\rightarrow CMon}$ be the free commutative monoid functor.

\begin{definition}
\label{def:ifcatpb}
The category $\cat{IF} := \cat{PS\times_{CMon} Set}$ is the pullback of $U^\bot_+$ and $F^+$. We call the objects of $\cat{IF}$ (unsigned) {\it incompatibility frames}. Concretely, these are sets $X$ equipped with a subset of multisets $\bot\subseteq\N[X]$.\footnote{As a pullback in $\cat{Cat}$, the objects of $\cat{IF}$ are pairs $(X,\ (X',0\in X',+,\bot\subseteq X'))$ such that $(\N[X],\{\},\cup)=(X',0,+)$.} Morphisms are functions which preserve $\bot$ (i.e. $f(\bot_\cX)\subseteq \bot_\cY$) and are continuous functions, i.e. for all $A,B\subseteq \N[X]$ we have $A^\bot\subseteq B^\bot\implies f(A)^\bot\subseteq f(B)^\bot$.
\end{definition}

\begin{figure}
  \begin{minipage}{0.5\linewidth}
    \[\begin{tikzcd}[cramped]
	{\mathsf{IF}} & {\mathsf{PS}} & {F^\vee\downarrow U^\bot} & {\mathsf{GQ}} \\
	{\mathsf{Set}} & {\mathsf{CMon}} & {\mathsf{PreOrdCMon}} & {\mathsf{Quant}}
	\arrow["{F^\otimes}", tail, from=1-1, to=1-2]
	\arrow[from=1-1, to=2-1]
	\arrow["\lrcorner"{anchor=center, pos=0.125}, draw=none, from=1-1, to=2-2]
	\arrow["{{\iota_{\rm surj}}}"', tail, from=1-2, to=1-3]
	\arrow["{F^\oplus}", shift left, curve={height=-12pt}, from=1-2, to=1-4]
	\arrow["{{U^\bot_+}}", from=1-2, to=2-2]
	\arrow["{{\pi_\mathsf{GQ}}}"', from=1-3, to=1-4]
	\arrow[from=1-3, to=2-3]
	\arrow["{{U^\bot}}", from=1-4, to=2-4]
	\arrow["{{F^+}}"', tail, from=2-1, to=2-2]
	\arrow["{{F^\vee}}"', from=2-3, to=2-4]
\end{tikzcd}\]
\end{minipage}
\begin{minipage}{0.5\linewidth}
     \caption{Construction of $\cat{IF}$ (\cref{def:ifcatpb}).
    }
\label{fig:conif}
\end{minipage}
\end{figure}

The $(-)^\bot$ operation of phase spaces can be applied to unsigned incompatibility frames: we can define 
${(\Gamma\in \N[X])^\bot} := \{\Gamma' \in \N[X]\ |\ \Gamma'+\Gamma \in \bot\}$. 
Likewise, $(x \in X)^\bot:=\{x\}^\bot$ and $(\vec \Gamma \subseteq \N[X])^\bot := \bigcap_{\Gamma \in \vec \Gamma} \Gamma^\bot$. Overall, the construction of unsigned incompatibility frames is summarized in \cref{fig:conif}.

\subsection{Free Girard quantales from incompatibility frames}
\vspace{-2mm}

  In this section, we construct $F^{\otimes\oplus}\dashv U^{\otimes\oplus}$,  an adjunction $\cat{IF \adj GQ}$. For the following lemma, we denote the cartesian lift \cite{jacobs1999categorical} of a morphism $f$ with $\overline{f}$.

\begin{minipage}{0.65\linewidth}
  \begin{lemma}
  \label{lemma:pullbackprojadjoint}
  Let $F\colon \cat{A \rightarrow C}$ and $G\colon \cat{B\rightarrow C}$ be functors such that $F\dashv U$ (with unit $\eta$ and counit $\varepsilon$) and $G$ has cartesian lifts $\overline{\varepsilon_c}$ for all $\varepsilon_c\colon FU(c)\to c$. Then, the pullback projection $\pi_\cat{B}\colon \cat{A \times_C B\rightarrow B}$ has a right adjoint $R\colon  \cat{B\rightarrow A \times_C B}$ given by $R(b)\mapsto (UG(b), \dom(\overline{\varepsilon_{G(b)}}))$.
\end{lemma}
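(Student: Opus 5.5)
We construct the adjunction $\pi_\cat{B}\dashv R$ directly by exhibiting a natural bijection
\[\hom_{\cat{A\times_C B}}\bigl((a,b'),\ R(b)\bigr)\;\cong\;\hom_\cat{B}(b',b).\]
Fix $b\in\cat{B}$ and write $\overline{\varepsilon_{G(b)}}\colon \tilde b\to b$ for the chosen cartesian lift of the counit component $\varepsilon_{G(b)}\colon FUG(b)\to G(b)$. By definition of a lift, $G(\tilde b)=FUG(b)$ and $G(\overline{\varepsilon_{G(b)}})=\varepsilon_{G(b)}$. Hence $(UG(b),\tilde b)$ is a legitimate object of the pullback $\cat{A\times_C B}$, whose objects are pairs $(a,b')$ with $F(a)=G(b')$. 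On a morphism $k\colon b_1\to b_2$, define $R(k)$ as follows. The first component is $UG(k)$. For the second component, use cartesianness of $\overline{\varepsilon_{G(b_2)}}$ to obtain the unique map $\tilde b_1\to\tilde b_2$ lying over $FUG(k)$ whose composite with $\overline{\varepsilon_{G(b_2)}}$ equals $k\circ\overline{\varepsilon_{G(b_1)}}$. This is possible because $\varepsilon_{G(b_2)}\circ FUG(k)=G(k)\circ\varepsilon_{G(b_1)}$ by naturality of $\varepsilon$. Functoriality of $R$ then follows from the uniqueness in the cartesian property.

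The counit is $\pi_\cat{B}R(b)=\tilde b\xrightarrow{\overline{\varepsilon_{G(b)}}} b$. We verify its universal property. Given $(a,b')$ with $F(a)=G(b')$ and a map $h\colon b'\to b$ in $\cat{B}$, we must find a unique pair $(g\colon a\to UG(b),\ \ell\colon b'\to\tilde b)$ with $F(g)=G(\ell)$ and $\overline{\varepsilon_{G(b)}}\circ\ell=h$. Take $g$ to be the adjunct of $G(h)\colon F(a)=G(b')\to G(b)$ under $F\dashv U$. Then $\varepsilon_{G(b)}\circ F(g)=G(h)$. Cartesianness of $\overline{\varepsilon_{G(b)}}$ gives a unique $\ell$ over $F(g)$ with $\overline{\varepsilon_{G(b)}}\circ\ell=h$, and so $G(\ell)=F(g)$, which makes $(g,\ell)$ a morphism of the pullback.

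For uniqueness, suppose $(g',\ell')$ also satisfies the two conditions. Applying $G$ gives $\varepsilon_{G(b)}\circ F(g')=G(h)$, so $g'=g$ by uniqueness of adjuncts. Then $\ell'$ lies over $F(g)$ and factors $h$, so $\ell'=\ell$ by uniqueness in the cartesian property. This establishes the universal property of the counit, and hence the adjunction with right adjoint $R$ as stated.

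The main point requiring care is the use of the cartesian lift. It is only a lift of the specific maps $\varepsilon_c$, and the factorization through it requires a morphism lying over $\varepsilon_{G(b)}\circ F(g)$, which here is exactly $G(h)$. Beyond checking that, the only real content is combining the adjunct uniqueness with the cartesian uniqueness. The remaining verifications, such as naturality in $b$, are routine.
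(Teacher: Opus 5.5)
Your proof is correct, but it takes a different route from the paper's.

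The paper defines $R$ on objects and morphisms exactly as you do. It then writes down both the counit $\varepsilon'_b=\overline{\varepsilon_{G(b)}}$ and an explicit unit $\eta'_{(a,b)}$. The unit has $\cat{A}$-component $\eta_a$, and its $\cat{B}$-component is the unique lift of $F(\eta_a)$ whose composite with $\overline{\varepsilon_{G(b)}}$ is $\id_b$. The paper then checks naturality of $\eta'$ and both triangle identities by hand. Each check repeatedly uses the fact that two $\cat{B}$-morphisms lying over the same $\cat{C}$-morphism, which agree after postcomposition with a cartesian morphism, are equal.

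You instead show that each $\overline{\varepsilon_{G(b)}}$ is a universal arrow from $\pi_\cat{B}$ to $b$, i.e.\ a terminal object of $\pi_\cat{B}\downarrow b$. You then invoke the standard characterization of adjunctions by universal arrows. So a single existence-and-uniqueness argument (adjunct uniqueness for $F\dashv U$, then cartesian uniqueness) replaces all the naturality and triangle-identity computations.

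Nothing is lost by this. Your explicit $R(k)$ satisfies $\overline{\varepsilon_{G(b_2)}}\circ\pi_\cat{B}R(k)=k\circ\overline{\varepsilon_{G(b_1)}}$ by construction, so it is exactly the functor structure forced by universality. Consistently, $UG(k)$ is the adjunct of $G(k)\circ\varepsilon_{G(b_1)}$ by a triangle identity.

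The paper's route has the advantage of displaying the unit explicitly, which it uses later to describe units such as $\eta^\neg$ and $\eta^{\otimes\oplus}$ concretely. Your argument recovers the same unit by factoring $h=\id_{b'}$, which yields $g=\eta_a$ and $\ell$ the lift of $F(\eta_a)$. It is also considerably shorter and avoids the diagram bookkeeping.
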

\end{minipage}
\hspace{4mm}
\begin{minipage}{0.35\linewidth}
  \vspace{-5mm}
\[\begin{tikzcd}[cramped, column sep=8mm]
	{\mathsf{A\times_C B}} && {\mathsf{B}} \\
	\\
	{\mathsf{A}} && {\mathsf{C}}
	\arrow[""{name=0, anchor=center, inner sep=0}, "{\pi_\mathsf{B}}", curve={height=-6pt}, from=1-1, to=1-3]
	\arrow["{\pi_\mathsf{A}}"', from=1-1, to=3-1]
	\arrow["\lrcorner"{anchor=center, pos=0.125}, draw=none, from=1-1, to=3-3]
	\arrow[""{name=1, anchor=center, inner sep=0}, "{\exists R}", curve={height=-6pt}, dashed, from=1-3, to=1-1]
	\arrow["{G\ (\text{cart. lifts for }\varepsilon_c)}", from=1-3, to=3-3]
	\arrow[""{name=2, anchor=center, inner sep=0}, "F", curve={height=-6pt}, from=3-1, to=3-3]
	\arrow[""{name=3, anchor=center, inner sep=0}, "U", curve={height=-6pt}, from=3-3, to=3-1]
	\arrow["\dashv"{anchor=center, rotate=-90}, draw=none, from=0, to=1]
	\arrow["\dashv"{anchor=center, rotate=-90}, draw=none, from=2, to=3]
\end{tikzcd}\]
\end{minipage}

\begin{lemma}
  \label{lemma:ubotfibration}

  The forgetful functor $U^\bot_+\colon \cat{PS \rightarrow CMon}$ has cartesian lifts for surjections in  $\cat{CMon}$. 
\end{lemma}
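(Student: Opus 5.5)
Given a surjective monoid homomorphism $p\colon M\twoheadrightarrow U^\bot_+(\cY)$, where $\cY=(Y,+,0,\bot_\cY)$ is a phase space, I will lift it by pulling back $\bot$. Concretely, put $\cX:=(M,+,0,\bot_\cX)$ with $\bot_\cX:=p^{-1}(\bot_\cY)$. The candidate cartesian lift is $\overline{p}\colon\cX\to\cY$, whose underlying monoid map is $p$. Under \cref{lemma:pscomma} this determines an object of $\cat{PS}$, and $U^\bot_+(\overline p)=p$ by construction.

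First I need $\overline p$ to be a morphism of $\cat{PS}$. It preserves $\bot$ trivially, so what remains is continuity. The key computation is that for any $A\subseteq M$ we have $A^{\bot_\cX}=p^{-1}\big(p(A)^{\bot_\cY}\big)$, since $x+a\in p^{-1}(\bot_\cY)$ iff $p(x)+p(a)\in\bot_\cY$. Surjectivity of $p$ then gives $p(p^{-1}(S))=S$, so $p(A^{\bot_\cX})=p(A)^{\bot_\cY}$. Now suppose $A^{\bot}\subseteq B^{\bot}$. Applying $p$ gives $p(A)^{\bot}\subseteq p(B)^{\bot}$, which is exactly continuity in the form of \cref{lemma:continuityequiv}. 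The same identity also shows that the order $\leq_\cX$ is the pullback of $\leq_\cY$ along $p$, i.e.\ $a\leq_\cX b$ iff $p(a)\leq_\cY p(b)$. So $p$ is monotone, as it must be for the induced $\cat{PreOrdCMon}$ map.

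Second, I need cartesianness. Let $h\colon\cZ\to\cY$ be a $\cat{PS}$ morphism and $u\colon U^\bot_+\cZ\to M$ a monoid map with $p\circ u=U^\bot_+(h)$. The only possible factorization is $u$ itself, so uniqueness is automatic because $U^\bot_+$ is faithful. It remains to check that $u$ is a $\cat{PS}$ morphism.
\begin{itemize}
\item[(i)] $\bot$-preservation: $u(\bot_\cZ)\subseteq p^{-1}(h(\bot_\cZ))\subseteq p^{-1}(\bot_\cY)=\bot_\cX$.
\item[(ii)] Continuity: if $A^{\bot_\cZ}\subseteq B^{\bot_\cZ}$, then continuity of $h$ gives $p(u(A))^{\bot_\cY}\subseteq p(u(B))^{\bot_\cY}$. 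Taking $p^{-1}$ and using the identity above yields $u(A)^{\bot_\cX}\subseteq u(B)^{\bot_\cX}$.
\end{itemize}
Monotonicity of $u$ follows similarly from the pullback description of $\leq_\cX$.

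The main obstacle is keeping the bookkeeping honest between the comma-category description of $\cat{PS}$ (\cref{def:ps}) and the concrete description via $\bot$-preservation and continuity. I will rely on the paper's remark after \cref{lemma:pscomma}: morphisms are exactly the monoid maps that preserve $\bot$ and are continuous, with the Girard quantale component $g$ then uniquely induced. I would double-check that continuity in the sense of \cref{lemma:continuityequiv} suffices to make the induced $g$ a well-defined quantale homomorphism. Granting the paper's characterization, that is exactly what the stated equivalence provides.
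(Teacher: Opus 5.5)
Your proof is correct and follows the paper's argument essentially step for step: you pull $\bot$ back along the surjection, use the identity $A^{\bot_\cX}=p^{-1}(p(A)^{\bot_\cY})$ together with surjectivity to get continuity of the lift, and check that the factoring map $u$ preserves $\bot$ and is continuous, with uniqueness coming from faithfulness. Your extra observation that $\leq_\cX$ is the pullback of $\leq_\cY$, so that monotonicity comes for free, is a harmless addition that the paper leaves implicit.
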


All $\varepsilon$ components of $F^+\dashv U^+$ in $\cat{CMon}$ are surjections, therefore by \cref{lemma:pullbackprojadjoint,lemma:ubotfibration} the projection map $F^\otimes\colon \cat{IF\rightarrow PS}$ has a right adjoint, $U^\otimes$. 
This functor $U^\otimes$ sends a phase space $(X,+,0,\bot\subseteq X)$ to the incompatibility frame $(X,\bot'\subseteq \N[X])$, and multisets of $X$ (formal sums of elements in $X$) are in $\bot'$ iff their actual sum is in $\bot$.

\begin{lemma}
  \label{lemma:commaadjoint}
  Let $F\colon \cat{A \rightarrow C}$ and $G\colon \cat{B\rightarrow C}$ be functors with $F\dashv U$. Then the comma category $F \downarrow G$ has a reflective subcategory $R\colon \cat{B\rightarrowtail (F\downarrow G)}$ whose reflector is the projection $\pi_\cat{B}\colon (F\downarrow G) \rightarrow \cat{B}$. Concretely, $R$ sends $b\mapsto (UG(b),b,\varepsilon_{G(b)})$ and $f$ to $(UGf,f)$.
\end{lemma}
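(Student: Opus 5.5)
The plan is to verify directly that $R$ is a well-defined functor and then exhibit a natural bijection
\[
\mathrm{Hom}_{\cat{B}}(\pi_{\cat{B}}(a,b,\varphi),\, b') \;\cong\; \mathrm{Hom}_{F\downarrow G}\big((a,b,\varphi),\, R(b')\big).
\]
This gives $\pi_{\cat{B}}\dashv R$. We then observe that $\pi_{\cat{B}} R = \id_{\cat{B}}$ with identity counit, so $R$ is fully faithful and hence exhibits $\cat{B}$ as a reflective subcategory with reflector $\pi_{\cat{B}}$.

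\emph{Functoriality.} For $f\colon b\to b'$, the pair $(UGf, f)$ is a morphism $(UG(b),b,\varepsilon_{G(b)})\to(UG(b'),b',\varepsilon_{G(b')})$ in $F\downarrow G$. The required commuting square is $\varepsilon_{G(b')}\circ FUG(f) = G(f)\circ\varepsilon_{G(b)}$, which is exactly naturality of $\varepsilon$ at $G(f)$. Preservation of identities and composition follows from functoriality of $UG$.

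\emph{Hom-set bijection.} A morphism $(a,b,\varphi)\to(UG(b'),b',\varepsilon_{G(b')})$ is a pair $(h\colon a\to UG(b'),\ k\colon b\to b')$ with $\varepsilon_{G(b')}\circ F(h) = G(k)\circ\varphi$. The left-hand side is precisely the adjunct of $h$ under $F\dashv U$. So for any fixed $k$, there is exactly one $h$ satisfying the condition, namely the transpose of $G(k)\circ\varphi\colon F(a)\to G(b')$. Hence $(h,k)\mapsto k$ is a bijection onto $\mathrm{Hom}_{\cat{B}}(b,b')$, with inverse $k\mapsto(\widetilde{G(k)\circ\varphi},\,k)$.

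\emph{Naturality.} Naturality in $b'$ is immediate, since the bijection is just projection to the second component and $R$ acts as the identity there. Naturality in $(a,b,\varphi)$ is likewise immediate, since precomposition with $(h',k')$ acts on the $\cat{B}$-component by precomposition with $k'$.

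\emph{Counit and full faithfulness.} The counit at $b'$ is the image of $\id_{R(b')}$, namely $\id_{b'}$. An adjunction whose counit is an isomorphism has a fully faithful right adjoint, which gives reflectivity.

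No step is a real obstacle. The only point needing care is recognizing the comma-square condition as the adjunction transpose, which is what forces uniqueness of $h$.
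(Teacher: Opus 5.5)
Your proof is correct, but it takes a different route from the paper's. The paper uses the unit--counit presentation. It takes identities as the counit and writes the unit explicitly as $\eta'_{(a,b,\varphi)}=(\eta_a\cdot U\varphi,\ \id_b)$, in diagrammatic order. It then checks by hand that these components are morphisms of $F\downarrow G$, using naturality of $\varepsilon$ and a triangle identity of $F\dashv U$. It also checks that they form a natural transformation, using naturality of $\eta$, and that both triangle identities of $\pi_{\cat{B}}\dashv R$ hold.

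You instead use the hom-set presentation. You observe that the comma-square condition on a morphism $(h,k)$ into $R(b')$ says exactly that $h$ is the transpose of $G(k)\circ\varphi$. The universal property of $\varepsilon_{G(b')}$ then makes $(h,k)\mapsto k$ a bijection. Naturality comes for free because this bijection is just $\pi_{\cat{B}}$ on morphisms, and $R$ is the identity on the $\cat{B}$-component.

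Your argument is shorter and shows why the lemma holds: maps into $R(b')$ are determined by their $\cat{B}$-component. It also avoids the triangle-identity bookkeeping entirely. The paper's route gives the unit formula up front. In yours the unit appears only afterwards, as the image of $\id_b$, namely $(U\varphi\circ\eta_a,\ \id_b)$, which agrees with the paper's $\eta'$.
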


Because $F^\vee$ has a right adjoint, by \cref{lemma:commaadjoint} the projection $\pi_\cat{GQ}$ is the reflector for $\cat{GQ}$ as a reflective subcategory of $F^\vee \downarrow U^\bot$. This right adjoint sends $\cQ:=(Q,\otimes,I_\cQ,\leq,\bot_\cQ)$ to a triple whose underlying monoidal preorder is $(Q,\otimes,I_\cQ,\leq)$ and whose underlying Girard quantale is $\cQ$.

\begin{lemma}
  \label{lemma:commaadjointrestrict}
  The reflective subcategory inclusion $\cat{GQ\rightarrowtail F^\vee\downarrow U^\bot}$ of \cref{lemma:commaadjoint} corestricts to $\cat{PS}$, i.e. it factors as $U^\oplus\colon \cat{GQ\rightarrowtail PS}$ followed by $\iota_{\rm surj}\colon \cat{PS\rightarrowtail F^\vee\downarrow U^\bot}$.
\end{lemma}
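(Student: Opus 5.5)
We have to show that for each Girard quantale $\cQ=(Q,\otimes,I_\cQ,\leq,\bot_\cQ)$ the triple $R(\cQ)=(U^\vee U^\bot(\cQ),\ \cQ,\ \varepsilon_{U^\bot(\cQ)})$ of \cref{lemma:commaadjoint} satisfies the two conditions of \cref{def:ps}. Since $\cat{PS}$ is a full subcategory of $F^\vee\downarrow U^\bot$, the functor then automatically factors through $\iota_{\rm surj}$, with the morphism assignment $f\mapsto(U^\vee U^\bot f,f)$ unchanged. So the whole proof reduces to two object-level checks.

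\emph{Surjectivity of $\phi=\varepsilon_{U^\bot(\cQ)}$.} The counit of $F^\vee\dashv U^\vee$ at a quantale $\cQ$ is the map $F^\vee U^\vee(\cQ)\to\cQ$ that sends a lower set $A\subseteq Q$ to its join $\bigvee A$. For any $q\in Q$, the principal lower set $q^\downarrow$ satisfies $\bigvee q^\downarrow = q$. Hence $\phi$ is surjective.

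\emph{$\tilde\phi$ is an order embedding.} Under the hom-set bijection, the transpose of the counit is the identity: $\tilde\phi = U^\vee(\phi)\circ\eta_{U^\vee U^\bot\cQ}$. Concretely it sends $q\mapsto q^\downarrow\mapsto\bigvee q^\downarrow=q$, so $\tilde\phi=\id$ on $(Q,\otimes,I_\cQ,\leq)$. The identity trivially reflects and preserves order, so it is an order embedding.

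The main point needing care is the identification of the counit and of $\tilde\phi$ concretely. In particular, one must check that $\eta$ sends $q$ to the principal lower set $q^\downarrow$. That follows from how $F^\vee$ freely adds joins, i.e.\ from the universal property of lower sets as free join-completion. One should also note that the preorder on $Q$ used in $U^\vee$ is the quantale order, so "order embedding" is meant relative to it. With these identifications the rest is immediate.
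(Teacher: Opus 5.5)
Your proof is correct, but it takes a different route from the paper's. You verify the two defining conditions of \cref{def:ps} directly. First, $\varepsilon_{U^\bot(\cQ)}$ is surjective because $\bigvee q^\downarrow = q$. Second, its transpose is $\tilde\phi = U^\vee(\varepsilon)\circ\eta = \id$ by the triangle identity, so it is trivially an order embedding. In fact the triangle identity alone gives both facts, since it also makes $U^\vee(\varepsilon)$ a split epimorphism; your concrete identifications of $\eta$ and $\varepsilon$ are therefore not even needed.

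The paper instead asserts surjectivity and argues through the phase-space description of \cref{lemma:pscomma}. It views $R(\cQ)$ as the preordered monoid $(Q,\otimes,I_\cQ,\leq)$ with distinguished lower set $\varepsilon_*(\bot)=\bot^\downarrow$. It then checks that the quantale order coincides with the induced phase-space order, i.e.\ $a\leq b$ iff $\forall c\colon c\otimes b\leq\bot \implies c\otimes a\leq\bot$. The reverse direction uses the dualizing property with $c=b^\bot$.

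Your route is shorter and shows that the factorization is purely formal: the dualizing object plays no role in it. The Girard structure only matters in \cref{lemma:pscomma}, whose proof already recovers the order from $\bot$ for any triple satisfying the conditions of \cref{def:ps}. The paper's route buys the explicit identification of $U^\oplus(\cQ)$ with the phase space $(Q,\otimes,I_\cQ,\bot_\cQ^\downarrow)$, which is used immediately afterwards. With your argument you would obtain that description by applying the inverse map from the proof of \cref{lemma:pscomma} to $R(\cQ)$.
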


Let $F^\oplus:=\iota_{\rm surj}\cdot \pi_{\cat{GQ}}$. By \cref{lemma:commaadjointrestrict}, $F^\oplus\colon \cat{PS\rightarrow GQ}$ is the reflector for $\cat{GQ}$ as a reflective subcategory of $\cat{PS}$. By the bijective correspondence of \cref{lemma:pscomma}, this inclusion, 
 $U^\oplus\colon \cat{GQ\rightarrowtail PS}$, sends the Girard quantale $\cQ$ to the phase space $(Q,\otimes,I_\cQ,\bot_\cQ^\downarrow)$. We can compose these two adjunctions and obtain an adjunction $F^{\otimes\oplus}\dashv U^{\otimes\oplus}$. For some frame $\cX:=(X,\bot\subseteq \N[X])$, the unit $\eta^{\otimes\oplus}_\cX$ sends $x \in X$ to the principal lower set $x^\downarrow$, an element of free Girard quantale of the free phase space of $\cX$.

\subsection{Logical interpretation of unsigned incompatibility frames}
\label{sec:log}

Much like a category might not have products, it need not be the case, for some incompatibility frame $(X,\bot)$, that there exists a function $\otimes\colon \N[X]\rightarrow X$ such that $\forall \Gamma \in \N[X]:\otimes(\Gamma)^\bot = \Gamma^\bot$.  
It also need not be the case there exists a function $\oplus\colon \cP[\N[X]]\rightarrow X$ such that $\forall \{\Gamma_1,...,\Gamma_n\} \subseteq \N[X]: \oplus(\{\Gamma_1,...,\Gamma_n\})^\bot = \{\Gamma_1,...,\Gamma_n\}^\bot$. 
A logical expressivist would say that any such functions $\otimes$ and $\oplus$ {\it make conjunction} (resp. {\it disjunction}) {\it explicit} because the binary cases of these constraints are expressed by bidirectional sequent rules, quantified over all $\Gamma \in \N[X]$: 

\vspace{2mm}

\begin{minipage}{0.45\linewidth}
\begin{center}
\doubleLine
\AxiomC{$\Gamma, a,b \vdash $}
\RightLabel{$\otimes$}
\UnaryInfC{$\Gamma,a\otimes b \vdash $}
\DisplayProof
\end{center}
\end{minipage}
\begin{minipage}{0.45\linewidth}
  \begin{center}
\doubleLine
\AxiomC{$\Gamma, a \vdash $}
\AxiomC{$\Gamma,b \vdash $}
\RightLabel{$\oplus$}
\BinaryInfC{$\Gamma,a\oplus b \vdash $}
\DisplayProof
\end{center}
\end{minipage}
\vspace{2mm}

We interpret the adjunctions $F^\otimes\dashv U^\otimes$ and $F^\oplus\dashv U^\oplus$ respectively as the free addition of these structures to an incompatibility frame. 
One criterion of adequacy for any introduction of logical connectives via sequent rules is conservativity. 
This criterion states the propriety of judgments which do not feature the newly introduced connectives should be unchanged (the introduction of $tonk$ being the paradigmatic counterexample \cite{prior1960runabout}). 
The judgments in a frame $U^\otimes F^\otimes(\cX)$ which do {\it not} feature $\otimes$ are those which are in the image of $\eta^\otimes_\cX$. We characterize this property in our setting below.

\begin{definition}
\label{def:conservative}
A morphism of incompatibility frames $f\colon \cX\rightarrow \cY$ is {\it conservative} if $\forall \Gamma \in \N[X]\colon \Gamma \in \bot_\cX \iff f(\Gamma)\in\bot_\cY$. 
\end{definition}

\begin{proposition} 
  \label{prop:cons}
  The adjunction unit $\eta^{\otimes\oplus}_\cX$ is a conservative $\cat{IF}$ morphism for any $\cX \in \cat{IF}$. 
\end{proposition}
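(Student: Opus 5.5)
The plan is to unwind the two adjunctions to see concretely what $\eta^{\otimes\oplus}_\cX$ does, and then reduce conservativity to a computation in ${\rm Gir}$ of the free phase space. For $\cX=(X,\bot\subseteq\N[X])$, $F^\otimes(\cX)$ is the phase space $(\N[X],\cup,\{\},\bot)$, ordered by $\Gamma\leq\Gamma'$ iff $\Gamma^\bot\supseteq\Gamma'^\bot$. Then $F^\oplus$ of this phase space is ${\rm Gir}(F^\otimes\cX)$: the $(-)^{\bot\bot}$-closed subsets of $\N[X]$, with dualizing element $\bot$.

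Next, $U^\oplus$ sends this quantale to the phase space whose monoid is $({\rm Gir}(F^\otimes\cX),\otimes,I)$ and whose distinguished subset is $\bot^\downarrow=\{A\ |\ A\subseteq\bot\}$. Finally, $U^\otimes$ declares a multiset $\{A_1,\dots,A_n\}$ of closed sets incompatible iff $A_1\otimes\cdots\otimes A_n\subseteq\bot$. The unit sends $x\mapsto x^\downarrow$. Since $\eta^{\otimes\oplus}_\cX$ is a unit, it is already an $\cat{IF}$ morphism, so only conservativity needs proof. That is, for every $\Gamma=\{x_1,\dots,x_n\}\in\N[X]$ I must show
\[\Gamma\in\bot \iff x_1^\downarrow\otimes\cdots\otimes x_n^\downarrow\subseteq\bot .\]

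The key step is to identify the iterated tensor with the principal lower set $\Gamma^\downarrow$. First I note that a principal lower set is closed: $\{\Gamma\}^{\bot\bot}=\{\Gamma'\ |\ \Gamma'^\bot\supseteq\Gamma^\bot\}=\Gamma^\downarrow$. Second, I use that $(-)^{\bot\bot}$ is a quantic nucleus (as noted after \cref{def:natgq}), i.e. $(A^{\bot\bot}+B^{\bot\bot})^{\bot\bot}=(A+B)^{\bot\bot}$. I would verify this directly: $(A+B)^\bot=\{z\ |\ \forall a\in A\colon z+a\in B^\bot\}$, and $B^\bot=B^{\bot\bot\bot}$. By induction on $n$, these give
\[x_1^\downarrow\otimes\cdots\otimes x_n^\downarrow=\{x_1+\cdots+x_n\}^{\bot\bot}=\Gamma^\downarrow .\]
The empty case is handled the same way: the tensor unit is $\{\{\}\}^{\bot\bot}=\{\}^\downarrow$.

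It then remains to show $\Gamma\in\bot\iff\Gamma^\downarrow\subseteq\bot$. The backward direction is immediate, since $\Gamma\in\Gamma^\downarrow$. For the forward direction, suppose $\Gamma\in\bot$ and $\Gamma'\leq\Gamma$. Then $\{\}\in\Gamma^\bot\subseteq\Gamma'^\bot$, so $\Gamma'=\Gamma'+\{\}\in\bot$.

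I expect the main obstacle to be bookkeeping rather than mathematics. The identification of $\bot$ in $U^\oplus$ as the lower set $\bot^\downarrow$ under the quantale order $\subseteq$ has to be read off correctly from \cref{lemma:commaadjointrestrict}. The multiset-to-sum description of $U^\otimes$ has to be read off from \cref{lemma:pullbackprojadjoint,lemma:ubotfibration}. Finally, the nucleus identity for iterated tensors must be justified carefully.
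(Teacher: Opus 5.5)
Your proof is correct and follows essentially the same route as the paper: both reduce conservativity to whether $\bigotimes_i\{\gamma_i\}^\downarrow\subseteq\bot$ and observe that this tensor is governed by its largest element $\sum_i\gamma_i$, together with $\bot$ being a lower set. Your version is more careful than the paper's sketch. You explicitly handle the $(-)^{\bot\bot}$ closure in the tensor, via closedness of principal lower sets and the nucleus identity. You also prove the nontrivial reflection direction $\Gamma^\downarrow\subseteq\bot\implies\Gamma\in\bot$, whereas the forward direction is already automatic from $\eta^{\otimes\oplus}_\cX$ being an $\cat{IF}$ morphism.
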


\begin{proof}

Suppose an incompatibility frame $\cX:=(X,\bot)$ validates $\Gamma\vdash$ for some $\Gamma \in \N[X]$, i.e. $\sum_{i}\gamma_i\in \bot$. To check conservativity of $\eta^{\otimes\oplus}_\cX$, we must confirm that $\eta^{\otimes\oplus}(\gamma_1),...,\eta^{\otimes\oplus}(\gamma_n) \vdash$ in $\hat \cX$. 
This becomes a matter of checking if ${\bigotimes_i \{\gamma_i\}^\downarrow \subseteq \bot}$. An arbitrary element of the former set is a position in $\cX$ which lies below the sum of a choice from each set $\{\gamma_i\}^\downarrow$. The largest elements of these sets are the $\gamma_i$ themselves, so we can simply check whether $\sum_i \gamma_i$ is an element of $\bot$, which was precisely our starting assumption.
\end{proof}

The story so far is incomplete, as we are only capable of representing single-sided sequents. However, the following section shows how we can proceed analogously to represent two-sided sequents.

%%%%%%%%%%%%%%%%%%%%%%%%%%%%%%%%%%%%%%%%%%%%%%%%%%%%%%%%%%%%
\vspace{-3mm}
\section{Signed logic}
\label{sec:signed}
\vspace{-2mm}
\subsection{Defining categories of involutive phase spaces and implication frames}
\label{sec:defips}

In this section, we recover the two-sidedness of sequents by defining categories of signed incompatibility frames (i.e. implication frames) and involutive phase spaces. We begin with involutive monoids, in particular the functors $U^\dagger\colon \cat{CMon^\dagger\rightarrow CMon}$, which forgets the involutive structure of involutive commutative monoids, and $F^\dagger\colon \cat{CMon\rightarrow CMon^\dagger }$, which freely adds involutive structure, sending $\cY:=(Y,+,0)$ to $(Y^2,+^2,0^2,\sigma)$ and monoid homomorphisms $f\colon X\to Y$ to $f\times f\colon X^2\to Y^2$.

\begin{lemma}
\label{lemma:invmonoid}
 There are free and cofree adjunctions: ${U^\dagger\dashv F^\dagger \dashv U^\dagger}$. The free unit $\eta^\dagger_\cX\colon \cX\to   U^\dagger F^\dagger\cX$ in $\cat{CMon}$ sends $x \mapsto (x,0)$ and counit $\varepsilon_\cY\colon F^\dagger U^\dagger \cY \rightarrow \cY$ sending $(y_1,y_2)\mapsto y_1+y_2^\dagger$. The cofree unit $\eta_{\dagger\cY}\colon \cY\to  F^\dagger U^\dagger \cY$ in $\cat{CMon^\dagger}$ sends $y \mapsto (y,y^\dagger)$ and counit $\varepsilon_{\dagger,\cX}\colon U^\dagger F^\dagger \cX \rightarrow \cX$ sending $(x_1,x_2)\mapsto x_1$.
\end{lemma}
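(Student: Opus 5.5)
Both adjunctions will be established by exhibiting the stated units and counits and checking the triangle identities, after first confirming that every map involved lives in the right category. Conventions: an object of $\cat{CMon^\dagger}$ is a commutative monoid $(Y,+,0)$ with an involutive monoid automorphism $(-)^\dagger$ (so $(y+y')^\dagger=y^\dagger+y'^\dagger$, $0^\dagger=0$, $y^{\dagger\dagger}=y$). Morphisms are monoid homomorphisms commuting with $\dagger$. $F^\dagger\cX=(X^2,+^2,0^2,\sigma)$ is clearly such an object, and $f\times f$ commutes with $\sigma$, so $F^\dagger$ is a functor.

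For the free adjunction $U^\dagger\dashv F^\dagger$, with $U^\dagger\colon \cat{CMon^\dagger}\to\cat{CMon}$ on the left, I must read the roles carefully. The statement writes $\eta^\dagger_\cX\colon \cX\to U^\dagger F^\dagger\cX$ in $\cat{CMon}$ and $\varepsilon_\cY\colon F^\dagger U^\dagger\cY\to\cY$. So the adjunction actually proved by these data is $F^\dagger\dashv U^\dagger$: $F^\dagger$ is free and $U^\dagger$ is its right adjoint. The displayed chain ${U^\dagger\dashv F^\dagger\dashv U^\dagger}$ just records both. I therefore prove two things. First, $F^\dagger\dashv U^\dagger$, with unit $x\mapsto(x,0)$ and counit $(y_1,y_2)\mapsto y_1+y_2^\dagger$. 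Second, $U^\dagger\dashv F^\dagger$, with unit $\eta_{\dagger\cY}\colon y\mapsto(y,y^\dagger)$ in $\cat{CMon^\dagger}$ and counit $(x_1,x_2)\mapsto x_1$ in $\cat{CMon}$.

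For $F^\dagger\dashv U^\dagger$, I check four things.
\begin{enumerate}
\item $\eta^\dagger_\cX$ is a monoid homomorphism. This is immediate.
\item $\varepsilon_\cY$ is a homomorphism. This uses commutativity and additivity of $\dagger$.
\item $\varepsilon_\cY$ commutes with involution: $\varepsilon(y_2,y_1)=y_2+y_1^\dagger=(y_1+y_2^\dagger)^\dagger$, using $\dagger\dagger=\id$.
\item Naturality. Both maps are defined uniformly, and morphisms commute with $\dagger$, so this is routine.
\end{enumerate}
The triangle identities then read as follows. On the first, $F^\dagger\cX\to F^\dagger U^\dagger F^\dagger\cX\to F^\dagger\cX$ sends $(x_1,x_2)\mapsto((x_1,0),(x_2,0))\mapsto(x_1,0)+\sigma(x_2,0)=(x_1,x_2)$. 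On the second, $U^\dagger\cY\to U^\dagger F^\dagger U^\dagger\cY\to U^\dagger\cY$ sends $y\mapsto(y,0)\mapsto y+0^\dagger=y$.

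For $U^\dagger\dashv F^\dagger$, I check the following.
\begin{enumerate}
\item $\eta_{\dagger\cY}$ is an involutive homomorphism: $\sigma(y,y^\dagger)=(y^\dagger,y)=(y^\dagger,y^{\dagger\dagger})$.
\item The projection $(x_1,x_2)\mapsto x_1$ is a $\cat{CMon}$ map.
\item Both maps are natural.
\end{enumerate}
The triangle identities read as follows. On the first, $U^\dagger\cY\to U^\dagger F^\dagger U^\dagger\cY\to U^\dagger\cY$ sends $y\mapsto(y,y^\dagger)\mapsto y$. On the second, $F^\dagger\cX\to F^\dagger U^\dagger F^\dagger\cX\to F^\dagger\cX$ sends $(x_1,x_2)\mapsto((x_1,x_2),(x_2,x_1))\mapsto(x_1,x_2)$, applying $\pi_1\times\pi_1$.

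The only real obstacle is bookkeeping: matching the directions in the statement's notation to the correct adjunction, and noticing where commutativity and involutivity are used, namely that the counit $y_1+y_2^\dagger$ is a homomorphism that respects $\dagger$. Everything else is mechanical.
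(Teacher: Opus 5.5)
Your proposal is correct and follows the same route as the paper. Both check that $F^\dagger$ is a functor and that the four stated maps are (involution-preserving) homomorphisms and natural, then verify the four triangle identities by the same elementwise computations, e.g.\ $(x_1,x_2)\mapsto((x_1,0),(x_2,0))\mapsto(x_1,x_2)$ and $(x_1,x_2)\mapsto((x_1,x_2),(x_2,x_1))\mapsto(x_1,x_2)$. You also correctly identify the free adjunction as $F^\dagger\dashv U^\dagger$ and the cofree one as $U^\dagger\dashv F^\dagger$, which the paper leaves implicit.
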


There is a forgetful functor, $U^\leq\colon \cat{PreOrdCMon\rightarrow CMon}$, which discards the order structure $\leq$ from preordered commutative monoids. 

\begin{definition}
\label{def:invpremond}
Let $\cat{PreOrdCMon^\dagger:= CMon^\dagger \times_{CMon} PreOrdCMon}$ be the category of involutive preordered commutative monoids, defined as the pullback of $U^\dagger$ and $U^\leq$. 
Concretely, the objects are ordered, involutive commutative monoids, but no constraints are imposed on the relations between $\dagger$ and $\leq$. 
Morphisms preserve $+$, $\leq$, and $\dagger$.
\end{definition}

\vspace{1mm}
Let $\pi_\leq\colon \cat{PreOrdCMon^\dagger\rightarrow PreOrdCMon}$ be a projection map of this pullback, and let $F^{\vee}_\dagger:=\pi_\leq\cdot F^\vee$.%\colon \cat{PreOrdCMon^\dagger\rightarrow Quant}$. 
\vspace{1mm}

\begin{definition}
The category $\cat{PS}^\dagger$ is the full subcategory of ${F^\vee_\dagger \downarrow U^\bot}$, restricted to surjective $\phi$ with $\tilde \phi$ being an order embedding. Let $\iota_{\rm surj, \dagger}\colon \cat{PS^\dagger}\rightarrowtail F^\vee_\dagger \downarrow U^\bot$ be the inclusion.
\end{definition}

Moving from $\cat{PS}$ to $\cat{PS}^\dagger$ equips phase spaces with an involution bearing no particular relation to $\bot$, and morphisms must preserve this involution. 
There is a forgetful functor $U^\bot_\dagger\colon \cat{PS^\dagger\rightarrow CMon^\dagger}$ which discards the $\bot$ structure.  

\begin{definition}
The category $\cat{PS_\pm := PS^\dagger \times_{CMon^\dagger} CMon}$ of signed phase spaces is the pullback of $U^\bot_\dagger$ and $F^\dagger$. 
Concretely, its objects are commutative monoids with a subset $\bot\subseteq X^2$. For any $A\subseteq X^2$, $A^\bot:={\{(x,y) \in X^2\ |\ {\forall (a,b) \in A\colon} (x+a,\ y+b)\in \bot\}}$. Morphisms in $\cat{PS_\pm}$ are functions $f\colon X\rightarrow Y$ such that $(f\times f)(\bot_\cX) \subseteq \bot_\cY$ and continuity is satisfied, i.e. for all $A,B\subseteq X^2$ we have ${A^\bot\subseteq B^\bot \implies (f\times f)(A)^{\bot'} \subseteq (f\times f)(B)^{\bot'}}$.
\end{definition}

There is a forgetful functor $U^\bot_\pm\colon \cat{PS_\pm\rightarrow CMon}$ discarding the $\bot$ structure.

\begin{definition}
    \label{def:signedframe}
The category $\cat{IF_\pm := PS_\pm \times_{CMon} Set}$ of involutive incompatibility frames is the pullback of $U^\bot_\pm$ and $F^+$. 
Concretely, its objects are the implication frames of \cref{def:signedif}, and its morphisms are continuous functions $f\colon X\rightarrow Y$ such that $(f \times f)(\bot_\cX) \subseteq \bot_\cY$.
\end{definition}

\begin{figure}
  \begin{minipage}{0.75\linewidth}
    \[\begin{tikzcd}[cramped, column sep=4mm]
		{\mathsf{IF_\pm}} & {\mathsf{PS_\pm}} & {\mathsf{PS^\dagger}} & {F^\vee_\dagger\downarrow U^\bot} && {\mathsf{GQ}} \\
	{\mathsf{Set}} & {\mathsf{CMon}} & {\mathsf{CMon^\dagger}} & {\mathsf{PreOrdCMon^\dagger}} & {\mathsf{PreOrdCMon}} & {\mathsf{Quant}} \\
	&&& {\mathsf{CMon^\dagger}} & {\mathsf{CMon}}
	\arrow["{{F^\otimes_\pm}}", tail, from=1-1, to=1-2]
	\arrow[from=1-1, to=2-1]
	\arrow["\lrcorner"{anchor=center, pos=0.125}, draw=none, from=1-1, to=2-2]
	\arrow["{{F^\neg}}", from=1-2, to=1-3]
	\arrow["{{{U^\bot_\pm}}}", from=1-2, to=2-2]
	\arrow["\lrcorner"{anchor=center, pos=0.125}, draw=none, from=1-2, to=2-3]
	\arrow["{{{\iota_{\rm surj,\dagger}}}}"', tail, from=1-3, to=1-4]
	\arrow["{F^\oplus_\pm}", shift left, curve={height=-12pt}, from=1-3, to=1-6]
	\arrow["{{{U^\bot_\dagger}}}", from=1-3, to=2-3]
	\arrow["{{{\pi_\mathsf{GQ,\dagger}}}}"', from=1-4, to=1-6]
	\arrow[from=1-4, to=2-4]
	\arrow["{{{U^\bot}}}", from=1-6, to=2-6]
	\arrow["{{{F^+}}}"', tail, from=2-1, to=2-2]
	\arrow["{{{F^\dagger}}}"', from=2-2, to=2-3]
	\arrow["{{{\pi_\leq}}}"', from=2-4, to=2-5]
	\arrow["{{{F^\vee_\dagger}}}", curve={height=-12pt}, from=2-4, to=2-6]
	\arrow[from=2-4, to=3-4]
	\arrow["\lrcorner"{anchor=center, pos=0.125}, draw=none, from=2-4, to=3-5]
	\arrow["{{{F^\vee}}}"', from=2-5, to=2-6]
	\arrow["{{{U^\leq}}}", from=2-5, to=3-5]
	\arrow["{{{U^\dagger}}}"', from=3-4, to=3-5]
\end{tikzcd}\]
\end{minipage}
  \begin{minipage}{0.24\linewidth}
     \caption{Construction of $\cat{IF_\pm}$  (\cref{def:signedframe}).
    }
\label{fig:consif}
\end{minipage}
\end{figure}

We can also characterize $\cat{IF_\pm}$ as the non-full subcategory of $\cat{IF}$ obtained by restricting to sets of the form $X+X$ and morphisms of the form $f+f$ (likewise, $\cat{PS_\pm \rightarrowtail PS}$). 
The overall construction of $\cat{IF_\pm}$ is summarized in \cref{fig:consif}. Although general implication frames are very expressive in their capacity to represent unrestricted consequence relations, we will have special interest in ones which are reflexive. 

\begin{definition}
  \label{def:refl}
An element $x \in X$ of an implication frame $(X,\ \bot\subseteq \N[X+X])$ is {\it reflexive} if $(x,x)\in \bot$. If all elements are reflexive, we say the frame itself is reflexive. Let $\iota^{\rm r}\colon \cat{IF}_\pm^{\rm r} \rightarrowtail \cat{IF}_\pm$ be the full subcategory of reflexive frames. 
\end{definition}

\vspace{-4mm}
\subsection{Free Girard quantales from reflexive implication frames}
\vspace{-2mm}

In this section, we construct $F^{\otimes\neg\oplus}\dashv U^{\otimes\neg\oplus}$, an adjunction $\cat{IF_\pm \adj GQ}$, and $F_\pm\dashv U_\pm$,  an adjunction $\cat{IF^r_\pm \adj GQ}$. With the same reasoning as \cref{lemma:ubotfibration}, $U^\bot_\pm$ has cartesian lifts over surjections, therefore \cref{lemma:pullbackprojadjoint} tells us $F^\otimes_\pm\colon \cat{IF_\pm\rightarrow PS_\pm}$ has a right adjoint, 
$U^\otimes_\pm$. This functor behaves just like $U^\otimes\colon \cat{PS\to IF}$ except it operates pointwise on pairs of multisets: given a signed phase space $(X,+,0,\bot\subseteq X^2) \in \cat{PS_\pm}$, its underlying implication frame is $U^\otimes_\pm(\cX)={(X,\bot'\subseteq \N[X+X])}$, where $(\Gamma, \Delta)$ is in $\bot'$ iff the (pairwise) sum $(\sum_i \gamma_i,\sum_j \delta_j)$ is in $\bot$. 

Likewise, by the same reasoning as \cref{lemma:ubotfibration}, $U^\bot_\dagger\colon \cat{PS^\dagger\to CMon^\dagger}$ has cartesian lifts for counit morphisms in $\cat{CMon^\dagger}$ of the free involutive commutative monoid $F^\dagger \dashv U^\dagger$, which are surjections. 
Thus we can again use  \cref{lemma:pullbackprojadjoint} to lift the free involutive commutative monoid adjunction $F^\dagger\dashv U^\dagger$ to the projection map $F^\neg\colon \cat{PS_\pm\to PS^\dagger}$. This yields a right adjoint $U^\neg$ which sends an involutive phase space $(X,+,0,\dagger,\bot\subseteq X)$ to a signed phase space $(X,+,0,\bot\subseteq X^2)$ with ${(x,y) \in \bot'}$ whenever ${x+y^\dagger \in \bot}$.
The adjunction $F^\neg\dashv U^\neg$ captures that one may consider only left handed sequents as long as one allows elements to have a `dual' element which behaves as if that element were on the other side of the turnstile. 
The unit, $\eta^\neg$, sends the elements of a signed phase space  $(X,+,0,\bot\subseteq X^2)$ to a signed phase space whose elements are thought of as pairs of the original signed phase space $(X^2,+,0,\bot'\subseteq X^4)$, where the second element of the pair acts as if its being added to the other side of the turnstile, i.e. $((a,b),(c,d)) \in \bot'$ iff $(a+d,\ c+b) \in \bot$. 

Now, having defined $U^\otimes_\pm$ and $U^\neg$, we now turn to defining $U^\oplus_\pm\colon \cat{GQ\to PS^\dagger}$.

\begin{lemma}
  \label{lemma:forgetprefib}
  The forgetful functor $U^\leq\colon \cat{PreOrdCMon\rightarrow CMon}$ is a fibration. 
\end{lemma}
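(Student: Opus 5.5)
We need to show $U^\leq\colon \cat{PreOrdCMon}\to\cat{CMon}$ is a Grothendieck fibration, i.e. that every morphism $f\colon M\to U^\leq(\cY)$ in $\cat{CMon}$, with $\cY=(Y,+,0,\leq_\cY)$, has a cartesian lift. The plan is to construct the lift by pulling back the preorder along $f$. Concretely, we equip the underlying monoid $M=(M,+,0)$ with the relation $m\leq_f m' :\iff f(m)\leq_\cY f(m')$. We then take $\overline f\colon (M,+,0,\leq_f)\to \cY$ to be $f$ itself.

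The first step is to check that $(M,+,0,\leq_f)$ is a preordered commutative monoid. Reflexivity and transitivity are inherited from $\leq_\cY$ because $\leq_f$ is a pullback relation. Compatibility with $+$ follows from $f$ being a homomorphism: if $m\leq_f m'$ then $f(m+n)=f(m)+f(n)\leq_\cY f(m')+f(n)=f(m'+n)$, using that $\leq_\cY$ is compatible with addition in $\cY$. Monotonicity of $\overline f$ holds by definition, and $U^\leq(\overline f)=f$ holds on the nose.

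The second step is the cartesian universal property. Let $g\colon \cZ\to\cY$ be a morphism in $\cat{PreOrdCMon}$, and suppose $U^\leq(g)=f\circ h$ for some monoid homomorphism $h\colon U^\leq(\cZ)\to M$. We must show there is a unique preordered monoid map $\hat h\colon \cZ\to (M,\leq_f)$ with $U^\leq(\hat h)=h$ and $\overline f\circ\hat h=g$. Uniqueness is immediate, since $U^\leq$ is faithful (maps are functions), so $\hat h$ must be $h$. For existence we need only check that $h$ is monotone. If $z\leq_\cZ z'$, then $f(h(z))=g(z)\leq_\cY g(z')=f(h(z'))$, which says exactly that $h(z)\leq_f h(z')$.

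There is no real obstacle here. The only point needing care is the convention: we take the standard definition of fibration with cartesian lifts for all morphisms, not just over identities. One can also note that $U^\leq$ is a bifibration, with opcartesian lifts given by the monoid congruence-compatible preorder generated by images, but this is not needed. The same argument, applied to surjections and to pulling back $\bot$ along $f$, is what underlies \cref{lemma:ubotfibration}.
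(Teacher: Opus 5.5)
Your proposal is correct and follows essentially the same route as the paper's proof: both take the cartesian lift to be $f$ itself with the preorder pulled back along $f$, verify that this gives a preordered commutative monoid, and obtain the universal property by checking that the induced map is monotone, with uniqueness coming from faithfulness of $U^\leq$. Your one-sided compatibility check ($m\leq_f m' \Rightarrow m+n\leq_f m'+n$) is equivalent to the paper's two-sided version by transitivity and commutativity, and your side remark about opcartesian lifts is also correct, though it is not needed.
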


Fibrations have all cartesian lifts, so by \cref{lemma:pullbackprojadjoint,lemma:forgetprefib}, we lift the cofree involutive monoid adjunction $U^\dagger\dashv F^\dagger$ to obtain a right adjoint to $\pi_\leq\colon \cat{PreOrdCMon^\dagger\to PreOrdCMon}$. Concretely, this right adjoint takes a preordered monoid $\cX:=(X,+,0,\leq)$ and freely adds an involution to yield ${(X^2,+^2,0^2,\leq')}$. The fibration of \cref{lemma:forgetprefib} tells us that the preorder structure on the new involutive preordered monoid is given by the pulling back the $\leq$ relation along the counit $\varepsilon$ of cofree involutive monoid, which sends $(x,y)\mapsto x$. Therefore $(x,y)\leq' (a,b):= x\leq a$.

Then, by \cref{lemma:commaadjoint} the composite left adjoint $F^\vee_\dagger\colon \cat{PreOrdCMon^\dagger\to Quant}$ induces $\cat{GQ}$ as a reflective subcategory of a comma category, where the subcategory inclusion sends $\cQ:=(Q,\otimes,I_\cQ,\leq)$ to a triple whose underlying involutive preordered monoid is $(Q^2,\otimes^2,I_\cQ^2, \leq')$ and whose underlying Girard quantale is $\cQ$. This right adjoint (like before in \cref{lemma:commaadjointrestrict}) restricts to $\cat{PS^\dagger}$. 
So $F^\oplus_\pm:=\iota_{\rm surj,\dagger}\cdot \pi_{\cat{GQ},\dagger}$ is the reflector of the reflective subcategory $U^\oplus_\pm\colon \cat{GQ \rightarrowtail PS^\dagger}$. By the bijective correspondence of \cref{lemma:pscomma}, the functor $U^\oplus_\pm$ sends a Girard quantale $\cQ:=(Q,\otimes,\leq,\bot)$ to the involutive phase space ${(Q^2, \otimes^2,\sigma, \bot^\downarrow \times Q)}$.

At this stage, we have a composite left adjoint $F^{\otimes\neg\oplus}:=F^\otimes_\pm\cdot F^\neg \cdot F^\oplus_\pm$ with right adjoint $U^{\otimes\neg\oplus}:=U^\oplus_\pm\cdot U^\neg \cdot U^\otimes_\pm$. This unit sends an element $x \in X$ to $((\{x\},\varnothing)^\downarrow,(\varnothing,\{x\})^\downarrow)$, which is a pair of elements of the free Girard quantale of the free involutive phase space of an implication frame.

We would like to extend this adjunction to $\cat{IF_\pm^r}$; however, this is not as simple as precomposing $F^{\otimes\neg\oplus}\dashv U^{\otimes\neg\oplus}$ with an adjunction $\cat{IF^r_\pm \adj IF_\pm}$. Although we can restrict any implication frame to its subframe of reflexive elements, this does {\it not} define a functor $\cat{IF}_\pm \to \cat{IF^r_\pm}$.\footnote{The issue is that the continuity property of morphisms is not preserved when restricting the functions to reflexive elements.}

\begin{lemma}
\label{lemma:reflrestrict}
Let $U_\pm\colon \cat{GQ \to IF^r_\pm}$ send a Girard quantale $\cQ$ to the reflexive subframe of $U^{\otimes\neg\oplus}(\cQ)$ and send a $\cat{GQ}$ morphism $f$ to the restriction of $U^{\otimes\neg\oplus} f$ to reflexive elements. This is functorial and right adjoint to $F_\pm:=\iota^{\rm r}\cdot F^{\otimes\neg\oplus}$.
\end{lemma}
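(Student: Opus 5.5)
We plan to establish functoriality first, and then obtain the adjunction by restricting the hom-set bijection of $F^{\otimes\neg\oplus}\dashv U^{\otimes\neg\oplus}$.

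\textbf{Functoriality.} For a $\cat{GQ}$ morphism $f\colon\cQ\to\cQ'$, the map $U^{\otimes\neg\oplus}f$ acts on elements $(p,q)\in Q^2$ as $(f(p),f(q))$. An element $(p,q)$ is reflexive in $U^{\otimes\neg\oplus}(\cQ)$ exactly when the relevant sum lands in $\bot$. Unwinding the three right adjoints, this condition is roughly $p\otimes q\leq\bot$ (up to how the involution places $q$ on the other side of the turnstile).

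Since $f$ preserves $\otimes$ and weakly preserves $\bot$, the image of a reflexive element is reflexive. Hence the restriction is a well-defined function between the reflexive subframes, and it preserves $\bot$ because $U^{\otimes\neg\oplus}f$ does.

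The hard part is \emph{continuity} of the restricted map, since the footnote warns that restriction to reflexive elements generally breaks continuity. Our plan is to prove that continuity survives in this special case, because $\cQ$ is a Girard quantale. Both the condition $A^\bot\subseteq B^\bot$ and the condition on images can be computed in $\cQ$ via joins and $\otimes$. We will then show that, for these frames, the reflexive elements are rich enough to compute polars. Concretely, every element $(p,q)$ is dominated by a reflexive one, e.g.\ $(p,p^\bot)$ or $(p\wedge q^\bot, q)$, and $\bot$ is downward closed. It follows that $\RSR$ of a set of reflexive positions, intersected with the reflexive ones, determines the full $\RSR$ by down-closure. So an inclusion $A^\bot\subseteq B^\bot$ computed in the subframe lifts to the same inclusion in $U^{\otimes\neg\oplus}(\cQ)$. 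Continuity of $U^{\otimes\neg\oplus}f$ then gives the inclusion of images' polars, and intersecting back with reflexive elements finishes this step. Preservation of identities and composition is immediate, because restriction commutes with composition.

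\textbf{Adjunction.} For $\cX\in\cat{IF^r_\pm}$ we must give a natural bijection
\[\cat{GQ}(F^{\otimes\neg\oplus}\iota^{\rm r}\cX,\cQ)\cong\cat{IF^r_\pm}(\cX,U_\pm\cQ).\]
The left side is already in bijection with $\cat{IF_\pm}(\iota^{\rm r}\cX,U^{\otimes\neg\oplus}\cQ)$, so it suffices to show two things.

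First, every $\cat{IF_\pm}$ morphism $g\colon\iota^{\rm r}\cX\to U^{\otimes\neg\oplus}\cQ$ lands in reflexive elements. This holds because $g$ preserves $\bot$ and each $x\in X$ has $(x,x)\in\bot_\cX$, so $(g(x),g(x))\in\bot$.

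Second, morphisms into the reflexive subframe correspond exactly to morphisms into the full frame that factor through it. Here we need that the subframe inclusion $U_\pm\cQ\rightarrowtail U^{\otimes\neg\oplus}\cQ$ is a continuous, $\bot$-preserving morphism which is moreover \emph{reflecting*}: $\bot$ and continuity conditions on the subframe agree with those computed in the full frame. Both facts follow from the domination argument above.

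Naturality is inherited from the original adjunction, and the unit of the new adjunction is $\eta^{\otimes\neg\oplus}_\cX$ corestricted. The main obstacle is therefore the domination/density lemma for reflexive elements in $U^{\otimes\neg\oplus}\cQ$. If that lemma fails in general, we would fall back on checking continuity directly by expressing polars of sets of pairs in $\cQ$ through the operations $\otimes$, $(-)^\bot$ and joins.
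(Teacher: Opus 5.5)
Your overall structure matches the paper's proof. Images of reflexive pairs are reflexive because $f(p)\otimes f(q)\le f(\bot)\le\bot$. Continuity of the restriction comes from lifting $A^\bot\cap\N[R+R]\subseteq B^\bot\cap\N[R+R]$ (writing $R$ for the set of reflexive pairs) to $A^\bot\subseteq B^\bot$ in $U^{\otimes\neg\oplus}\cQ$, applying continuity of $U^{\otimes\neg\oplus}f$, and intersecting back. The adjunction is the composite bijection through $\cat{IF_\pm}(\iota^{\rm r}\cX,U^{\otimes\neg\oplus}\cQ)$; your remark that $\bot$-preservation plus $(x,x)\in\bot$ forces such morphisms to land in reflexive elements is a detail the paper leaves implicit.

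The weak point is the lifting step, which you justify by ``domination'' plus downward closure. That mechanism does not work. Polars are lower sets, so if you replace $x\in A^\bot$ by a reflexive $x'$ with $x\le x'$, you cannot conclude $x'\in A^\bot$. If instead $x'\le x$, you cannot carry $x'\in B^\bot$ back to $x$. Your second candidate shows the problem: placed on the left of the turnstile, $(p\wedge q^\bot,q)$ in general lies strictly below $(p,q)$ (its polar can be strictly larger), not above it.

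What is needed, and what the paper uses, is exact equality of polars. In $U^{\otimes\neg\oplus}\cQ$ a position $(\Gamma,\Delta)$ is in $\bot$ iff $\bigotimes_{\gamma\in\Gamma}\gamma_+\otimes\bigotimes_{\delta\in\Delta}\delta_-\le\bot$. So only first components of left entries and second components of right entries matter. Given $(\Theta,\Omega)$, replace each left entry $(p,q)$ by $(p,p^\bot)$ and each right entry $(p,q)$ by $(q^\bot,q)$; your $(p\wedge q^\bot,q)$ also works, but only on the right. This yields a reflexive position $(\Theta',\Omega')$ with $(\Theta',\Omega')^\bot=(\Theta,\Omega)^\bot$. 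Hence $(\Theta,\Omega)\in A^\bot\Rightarrow(\Theta',\Omega')\in A^\bot\cap\N[R+R]\subseteq B^\bot\Rightarrow(\Theta,\Omega)\in B^\bot$. The replacement must depend on the side on which an entry occurs, since no single reflexive pair reproduces both components of a non-reflexive $(p,q)$. Your claim $A^\bot={\downarrow}(A^\bot\cap\N[R+R])$ is true, but it follows from this equivalence, not from one-sided domination. With this fact the same argument also gives continuity of the inclusion $\iota^{\rm r}U_\pm\cQ\rightarrowtail U^{\otimes\neg\oplus}\cQ$ needed for the adjunction. Your proof then coincides with the paper's, and no fallback is needed.
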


\vspace{-3mm}
\subsection{Logical interpretation of implication frames}

\begin{minipage}{0.8\linewidth}
  Adding the $\otimes\colon \N[X+X]\rightarrow X$ and $\oplus\colon \cP[\N[X+X]]\rightarrow X$ structure to implication frames works analogously to \cref{sec:log}. With implication frames, we can furthermore ask if there exists a function $\neg\colon X\to X$ that assigns each bearer $x \in X$ a `dual' bearer, $\neg x$, where $(a,0)^\bot=(0,\neg a)^\bot$ and $(\neg a,0)^\bot=(0,a)^\bot$. This can be expressed as the sequent rule to the right. Such a function need not exist for a signed phase space or implication frame (for example, none of the four functions $\{a,b\}\to\{a,b\}$ has this property in \cref{sec:exidem}). The $F^\neg\dashv U^\neg$ unit, $\eta^\neg$, freely adds this structure.
\end{minipage}
\begin{minipage}{0.19\linewidth}
\begin{center}
\doubleLine
\AxiomC{$\Gamma,a \vdash b, \Delta $}
\UnaryInfC{$\Gamma, \neg b \vdash \neg a, \Delta$}
\RightLabel{$\neg$}
\DisplayProof
\end{center}

\end{minipage}
\vspace{2mm}

Now that we are working in a signed setting, we can recover concepts introduced in \cref{sec:back}.

\begin{lemma}
  \label{lemma:rsrequiv}
  The elements of the Girard quantale $F^{\otimes\neg\oplus}(\cX)$ are `roles' $\R$ (\cref{def:rolcont}), and the $\RSR$ function (\cref{def:rsr}) restricted to roles is precisely the $(-)^\bot$ operation of the quantale.  
\end{lemma}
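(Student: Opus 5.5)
We will compute $F^{\otimes\neg\oplus}(\cX)=F^\oplus_\pm F^\neg F^\otimes_\pm(\cX)$ for an implication frame $\cX=(X,\bot\subseteq\N[X+X])$ explicitly, one stage at a time, and then compare its closure operator with $\RSR$.

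First, $F^\otimes_\pm$ is the pullback projection, so it sends $\cX$ to the signed phase space with carrier monoid $\N[X]$, i.e. the signed phase space $(\N[X],\cup,\varnothing,\bot\subseteq \N[X]^2\cong\N[X+X])$. Second, $F^\neg$ is again a projection onto $\cat{PS^\dagger}$. It yields the involutive phase space with carrier $\N[X]^2\cong \N[X+X]$, componentwise $\cup$, involution $\sigma$, and the same distinguished subset $\bot$ now regarded as a subset of the carrier. The key point is that, under the identification of pairs with positions, the order $\leq_\cX$ of \cref{def:psoriginal} and the operation $(-)^\bot$ on subsets of this carrier are exactly the phase-space ones. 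For $A\subseteq\N[X+X]$,
\[A^\bot=\{(\Theta,\Omega)\ |\ \forall (\Gamma,\Delta)\in A\colon (\Gamma\cup\Theta,\Delta\cup\Omega)\in\bot\},\]
which is verbatim \cref{def:rsr}. We must make sure the involution plays no role in $(-)^\bot$; this holds because $\bot$ of an involutive phase space is just a subset and $(-)^\bot$ is defined from $+$ alone.

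Third, $F^\oplus_\pm=\iota_{\rm surj,\dagger}\cdot\pi_{\cat{GQ},\dagger}$ picks out the Girard-quantale component of the comma-category triple. By \cref{lemma:pscomma}, which also applies to $\cat{PS^\dagger}$ since the involution is forgotten by $F^\vee_\dagger=\pi_\leq\cdot F^\vee$, this component is ${\rm Gir}$ of the underlying phase space. Its elements are therefore the $(-)^{\bot\bot}$-closed subsets of $\N[X+X]$, and its dualizing operation is $A\mapsto A^\bot$ (linear negation in ${\rm Gir}$ is $A\multimap\bot = A^\bot$).

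It then remains to show that the closed sets are precisely $\im(\RSR)=\R$. If $A=A^{\bot\bot}$, then $A=\RSR(A^\bot)\in\R$. Conversely, $A^\bot=A^{\bot\bot\bot}$ holds by the standard Galois-connection argument, since $(-)^\bot$ is antitone and $A\subseteq A^{\bot\bot}$; hence every $\RSR(B)=B^\bot$ is closed. So the carriers coincide, and on them $\RSR$ restricts to the quantale's $(-)^\bot$.

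The main obstacle is the bookkeeping in the second step: checking that the identifications $\N[X]^2\cong\N[X+X]$ and the passage through $F^\neg$ do not twist $\bot$, for example by $\sigma$ placing $\Delta$ on the wrong side. We will confirm this directly from the description of the adjunction $F^\neg\dashv U^\neg$ as a pullback projection, which leaves the underlying $\bot$ unchanged. The unit description, sending $x$ to $((\{x\},\varnothing)^\downarrow,(\varnothing,\{x\})^\downarrow)$, serves as a sanity check on orientation.
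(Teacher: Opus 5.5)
Your proof is correct and follows essentially the paper's route: you identify the carrier of the involutive phase space obtained from $\cX$ via $F^\otimes_\pm$ and $F^\neg$ with $\N[X+X]$, observe that its phase-space $(-)^\bot$ is literally $\RSR$, and conclude that the $(-)^{\bot\bot}$-closed sets are exactly $\im(\RSR)$. The differences are only bookkeeping. The paper separately checks that every role is a lower set, which you get for free because ${\rm Gir}$ is defined by closed subsets and any $B^\bot$ is automatically down-closed. Conversely, you supply the Galois-connection argument $B^{\bot\bot\bot}=B^\bot$ for ``image equals fixed points'', which the paper takes for granted.
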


Let $\cX:=(X,\bot)$ and $\hat X:=U^{\otimes\neg\oplus}(F^{\otimes\neg\oplus}(\cX)) = (\hat X,\hat \bot)$. We write an arbitrary element of $\hat X$ as $\texttt{A} = \langle \texttt{a}_+,\ \texttt{a}_-\rangle$ where $\texttt{a}_\pm \in \cP[\N[X+X]]$ (in particular, a pair of $(-)^{\bot\bot}$ closed subsets), and we write a multiset of such elements as $\vec{\texttt{A}} = \sum_{\texttt{A}\in \hat X} \texttt{A}^{i_\texttt{A}}$ for $i_\texttt{A} \in \N$.
The unit $\eta_\cX^{\otimes\neg\oplus}\colon \cX\to \hat \cX$ is precisely the interpretation of atoms of \cref{def:basecase}, assigning $x \mapsto \bbrack{x}$.
 Because $\hat \cX$ is an implication frame, we have a notion of consequence among multisets of its elements. 
 $(\vec{\texttt{A}},\vec{\texttt{B}}) \in \hat \bot$, i.e. $\vec{\texttt{A}} \vDash \vec{\texttt{B}}$, holds whenever $(\bigotimes_\texttt{A} \texttt{a}_+^{i_\texttt{A}})\otimes (\bigotimes_\texttt{B} \texttt{b}_-^{i_\texttt{B}})\subseteq \bot$, which matches the semantic consequence relation of the implication space semantics~(\cref{def:semcons}). 

Conservativity in $\cat{IF_\pm}$ is determined by interpreting the morphisms in $\cat{IF}$ and then applying \Cref{def:conservative}. The unit $\eta_\cX^{\otimes\neg\oplus}$ is conservative by the same argument as \cref{prop:cons}. Consequently, any restriction of $\eta^{\otimes\neg\oplus}$ (including $\eta_\pm$ from $F_\pm\dashv U_\pm$ or $\eta^{\rm c}$ of \cref{sec:idem}) is conservative. 

Elements $\texttt{A}$ of $\hat \cX$ which satisfy $\texttt{A}\vDash \texttt{A}$ are those for which $\texttt{a}_+ \otimes \texttt{a}_- \subseteq \bot$, which can also be expressed as $\texttt{a}_+ \subseteq \texttt{a}_-^\bot$. When we work instead with $\eta_\pm$, i.e. restricting to reflexive implication frames, then the elements of $\hat \cX$ have the following natural operations which are closed under the subset of $\cQ^2$ restricted to reflexive elements: 

\begin{lemma}
  \label{lemma:twistrestrict}
  Let $\hat \cX := U_\pm(\cQ)$ for some Girard quantale $\cQ$.
  These elements (which are pairs of elements of $\cQ$ that satisfy reflexivity) are closed under the quantale operations of the {\it twisted quantale} $\cQ\times\cQ^{\rm op}$, i.e. $\otimes\times \parr$,\footnote{Note $x\parr y := (x^\bot\otimes y^\bot)^\bot$, and this operation is a monoidal product for the opposite Girard quantale.} and $\vee \times \wedge$.
\end{lemma}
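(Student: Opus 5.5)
First I would make explicit which elements of $U_\pm(\cQ)$ are under discussion. By the description of $U^\oplus_\pm$, $U^\neg$ and $U^\otimes_\pm$, an element of $U^{\otimes\neg\oplus}(\cQ)$ is a pair $\langle a_+,a_-\rangle\in Q^2$. The frame is built by pulling back the involutive phase space $(Q^2,\otimes^2,\sigma,\bot^\downarrow\times Q)$ along $U^\neg$. So a position $(\langle a_+,a_-\rangle,\langle b_+,b_-\rangle)$ lies in the resulting $\bot$ iff $\langle a_+,a_-\rangle\otimes^2\sigma\langle b_+,b_-\rangle=\langle a_+\otimes b_-,\ a_-\otimes b_+\rangle$ has first component below $\bot$, i.e. iff $a_+\otimes b_-\leq\bot$. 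Taking $b=a$ shows that reflexivity of $\langle a_+,a_-\rangle$ means $a_+\otimes a_-\leq\bot$. By the defining adjunction $x\otimes y\leq\bot\iff x\leq y^\bot$ of a Girard quantale, this is equivalent to $a_+\leq a_-^\bot$, or symmetrically $a_-\leq a_+^\bot$. This matches the remark preceding the lemma. So the claim reduces to a statement about the set $R:=\{(a_+,a_-)\mid a_+\otimes a_-\leq\bot\}$.

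Next I would check closure of $R$ under each twisted operation in turn.
\begin{enumerate}
\item For the twisted monoidal product, given $(a_+,a_-),(b_+,b_-)\in R$ I must show $(a_+\otimes b_+)\otimes(a_-\parr b_-)\leq\bot$, i.e. $a_+\otimes b_+\leq(a_-\parr b_-)^\bot=a_-^\bot\otimes b_-^\bot$, using $x^{\bot\bot}=x$. This follows from $a_+\leq a_-^\bot$, $b_+\leq b_-^\bot$ and monotonicity of $\otimes$ in each argument, since $\otimes$ preserves joins.
\item The twisted unit is $(I,\bot)$, because $\bot$ is the unit for $\parr$. It lies in $R$ since $I\otimes\bot=\bot$.
\item For binary joins $\vee\times\wedge$, I need $(a_+\vee b_+)\otimes(a_-\wedge b_-)\leq\bot$. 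Distributivity of $\otimes$ over joins rewrites the left side as $(a_+\otimes(a_-\wedge b_-))\vee(b_+\otimes(a_-\wedge b_-))$. Each term is bounded by $a_+\otimes a_-$ and $b_+\otimes b_-$ respectively, by monotonicity.
\item The same argument handles arbitrary joins $\bigvee_i\times\bigwedge_i$, together with the empty case $(0,\top)$, since $0\otimes\top=0$. Hence $R$ is a complete sublattice-like subset of the twisted order.
\end{enumerate}
I would also note that the twisted negation $(a_+,a_-)\mapsto(a_-,a_+)$ preserves $R$ by commutativity, although the lemma does not ask for it.

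The main obstacle is not these inequalities but the bookkeeping in the first step. One has to correctly unwind the composite $U^\oplus_\pm\cdot U^\neg\cdot U^\otimes_\pm$ (the $\sigma$-twist and the $\bot^\downarrow\times Q$ subset) so that reflexivity really becomes $a_+\otimes a_-\leq\bot$, rather than, say, a condition only on the first coordinate. If the orientation there comes out differently, I would recompute the $\eta^\neg$ description, $((a,b),(c,d))\in\bot'$ iff $(a+d,c+b)\in\bot$, specialised to $c=a$, $d=b$. This yields $a\otimes b\leq\bot$ in the first component, consistent with the above. Once that is fixed, the remaining steps are monotonicity and distributivity arguments in a commutative Girard quantale.
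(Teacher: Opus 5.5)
Your proposal is correct and follows the paper's proof. Like the paper, you reduce reflexivity to $a_+\otimes a_-\leq\bot$, i.e.\ $a_+\leq a_-^\bot$, and obtain closure under $\otimes\times\parr$ from monotonicity of $\otimes$ applied to $a_+\otimes b_+\leq a_-^\bot\otimes b_-^\bot$. The minor differences are that for $\vee\times\wedge$ you distribute $\otimes$ over the join instead of using the paper's identity $(a_-\wedge b_-)^\bot=a_-^\bot\vee b_-^\bot$, and you also verify the unit $(I,\bot)$ and the arbitrary and empty joins, which the paper leaves implicit.
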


We use these operations to define semantic clauses on the elements of $\hat \cX$ for MALL connectives.

\begin{proposition}
  \label{prop:valid}
  Let $\cX$ be an implication frame with $\eta^{\otimes\neg\oplus}_\cX\colon \cX\to \hat \cX$ The twisted quantale operations below validate the logical rules of linear logic (MALL), independent of the choice of $\cX$. Let $\langle \texttt{a}_+,\texttt{a}_-\rangle, \langle \texttt{b}_+,\texttt{b}_-\rangle$ be elements of $\hat \cX$.

  \vspace{-2mm}
\begin{align*}
  &\bbrack{A \otimes B}:=\langle \texttt{a}_+\otimes \texttt{b}_+,\texttt{a}_-\parr \texttt{b}_-\rangle&& &\bbrack{A \oplus B}:=\langle \texttt{a}_+\vee \texttt{b}_+,\texttt{a}_-\wedge \texttt{b}_-\rangle\\
   &\bbrack{A \parr B}:=\langle \texttt{a}_+\parr \texttt{b}_+,\texttt{a}_-\otimes \texttt{b}_-\rangle&&  &\bbrack{A \& B}:=\langle \texttt{a}_+\wedge \texttt{b}_+,\texttt{a}_-\vee \texttt{b}_-\rangle
\end{align*}

Moreover $\bbrack{\neg A}:=\langle \texttt{a}_-,\ \texttt{a}_+\rangle$, with the caveat that linear negation is typically written as $(-)^\bot$.
\end{proposition}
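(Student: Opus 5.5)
Our plan is to reduce validity of each MALL rule to an inequality in the Girard quantale $\cQ := F^{\otimes\neg\oplus}(\cX)$. Sequents over $\hat\cX$ are read through the consequence relation identified above. For multisets $\vec{\texttt{A}},\vec{\texttt{B}}$, the sequent $\vec{\texttt{A}}\vDash\vec{\texttt{B}}$ holds iff $(\bigotimes \texttt{a}_+)\otimes(\bigotimes \texttt{b}_-)\le \bot$. Write $\Gamma_+$ for the tensor of the premisory roles of the left context, and $\Delta_-$ for the tensor of the conclusory roles of the right context. Every sequent $\Gamma,\vec{\texttt{C}}\vDash\vec{\texttt{D}},\Delta$ then becomes a statement $\Gamma_+\otimes\Delta_-\otimes c\le\bot$, where $c$ collects the roles of the active formulas. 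We use the $*$-autonomous adjunction $x\otimes y\le\bot \iff x\le y^\bot$. Together with $\bot$ being dualizing ($x^{\bot\bot}=x$), this gives everything needed. By \cref{lemma:twistrestrict}, the clauses are well defined on $\hat\cX$ (or on its reflexive part), so no closure issues arise.

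The rules are verified connective by connective, each left rule paired with its right rule, writing $K:=\Gamma_+\otimes\Delta_-$.
\begin{itemize}
\item[$\neg$:] The clause swaps the roles. So $\Gamma,\neg A\vDash\Delta$ and $\Gamma\vDash A,\Delta$ both unfold to $K\otimes\texttt{a}_-\le\bot$, and the rule holds trivially (in both directions).
\item[$\otimes$L:] Both premise and conclusion read $K\otimes\texttt{a}_+\otimes\texttt{b}_+\le\bot$, by associativity of $\otimes$.
\item[$\otimes$R:] We must show that $K_1\otimes\texttt{a}_-\le\bot$ and $K_2\otimes\texttt{b}_-\le\bot$ imply $K_1\otimes K_2\otimes(\texttt{a}_-\parr\texttt{b}_-)\le\bot$. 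Using $x\parr y=(x^\bot\otimes y^\bot)^\bot$, the hypotheses give $K_1\le\texttt{a}_-^\bot$ and $K_2\le\texttt{b}_-^\bot$. Hence $K_1\otimes K_2\le\texttt{a}_-^\bot\otimes\texttt{b}_-^\bot=(\texttt{a}_-\parr\texttt{b}_-)^\bot$, which is the goal.
\item[$\parr$:] These rules are the mirror images of the $\otimes$ rules under the $\neg$ swap.
\item[$\oplus$L:] Here $\otimes$ distributes over joins, so $K\otimes(\texttt{a}_+\vee\texttt{b}_+)\le\bot$ iff both $K\otimes\texttt{a}_+\le\bot$ and $K\otimes\texttt{b}_+\le\bot$. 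This gives the invertible additive left rule.
\item[$\oplus$R:] From $K\otimes\texttt{a}_-\le\bot$ we get $K\otimes(\texttt{a}_-\wedge\texttt{b}_-)\le\bot$ by monotonicity, and similarly for $B$.
\item[$\&$:] These rules are the duals of the $\oplus$ rules. The right rule uses $x\le\texttt{a}_-^\bot$ and $x\le\texttt{b}_-^\bot$ iff $x\le\texttt{a}_-^\bot\wedge\texttt{b}_-^\bot=(\texttt{a}_-\vee\texttt{b}_-)^\bot$, a De Morgan law in a Girard quantale.
\end{itemize}
Every argument uses only the Girard quantale structure, so validity is independent of $\cX$.

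Identity and cut come next. Cut amounts to: $K_1\otimes\texttt{a}_-\le\bot$ and $K_2\otimes\texttt{a}_+\le\bot$ should imply $K_1\otimes K_2\le\bot$. We do not expect this to hold for an arbitrary element of $\hat\cX$. It does hold once $\texttt{a}_-^\bot\le\texttt{a}_+$ holds, because then $K_1\le\texttt{a}_-^\bot\le\texttt{a}_+\le K_2^\bot$. Likewise, identity $\texttt{A}\vDash\texttt{A}$ is exactly $\texttt{a}_+\le\texttt{a}_-^\bot$, i.e.\ reflexivity. The main obstacle is therefore to interpret ``logical rules'' correctly. We plan to show that the operational rules above hold unconditionally. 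We will then note that identity holds on the reflexive part used in \cref{lemma:twistrestrict}, and that cut requires the converse inequality. For cut, we would first check whether the clauses preserve the condition $\texttt{a}_+=\texttt{a}_-^\bot$ (true for $\otimes$ by $(x\parr y)^\bot=x^\bot\otimes y^\bot$, and for $\oplus$ by De Morgan). Failing that, we would state cut as a property of $\cX$ rather than of the clauses, consistent with the phrase ``logical rules'' excluding structural ones.
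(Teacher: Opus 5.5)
Your proposal is correct and follows essentially the same route as the paper. The paper likewise unfolds each operational rule to an inequality $\Gamma_+\otimes\Delta_-\otimes c\le\bot$ in the free Girard quantale and discharges it using $x\otimes y\le\bot\iff x\le y^\bot$, $(x\parr y)^\bot=x^\bot\otimes y^\bot$, monotonicity, and the De Morgan law $(x\vee y)^\bot=x^\bot\wedge y^\bot$, handling $\parr$ and $\&$ by duality under the role swap. Your remarks that identity needs $\texttt{a}_+\le\texttt{a}_-^\bot$ and cut needs $\texttt{a}_-^\bot\le\texttt{a}_+$ also match the paper's discussion just after the proposition.
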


\begin{proof}
  
The quantale operation $\otimes$ in the validations to the right of each rule is represented via concatenation for space.
Invertible steps are denoted with $\iff$, whereas implications are denoted with $\implies$.

  \vspace{5mm}
\begin{minipage}{0.17\linewidth}

\begin{center}  \boxed{
\doubleLine
\AxiomC{$\Gamma \vdash A, \Delta$}
\RightLabel{$\neg$L}
\UnaryInfC{$\Gamma,\neg A \vdash \Delta$}
\DisplayProof}
\end{center}
\end{minipage}
\begin{minipage}{0.30\linewidth}
  \begin{align*}
    &&\pi_2(\langle \texttt{a}_+,\texttt{a}_-\rangle) &\subseteq (\Gamma_+\Delta_-)^\bot \\ 
     {\scriptscriptstyle \iff\hspace{-3mm}}&& \texttt{a}_-  &\subseteq (\Gamma_+\Delta_-)^\bot \\
     {\scriptscriptstyle \iff\hspace{-3mm}}&& \pi_1(\langle \texttt{a}_-,\texttt{a}_+\rangle)  &\subseteq (\Gamma_+\Delta_-)^\bot \\
  \end{align*}
\end{minipage}\hspace{5mm}
\begin{minipage}{0.17\linewidth}
\begin{center}\boxed{
\doubleLine
\AxiomC{$\Gamma, A\vdash \Delta$}
\RightLabel{$\neg$R}
\UnaryInfC{$\Gamma \vdash \neg A,\Delta$}
\DisplayProof}
\end{center}
\end{minipage}
\begin{minipage}{0.3\linewidth}
  \begin{align*}
    &&\pi_1(\langle \texttt{a}_+,\texttt{a}_-\rangle) &\subseteq (\Gamma_+\Delta_-)^\bot \\ 
     {\scriptscriptstyle \iff\hspace{-3mm}}&& \texttt{a}_+ &\subseteq (\Gamma_+\Delta_-)^\bot \\
     {\scriptscriptstyle \iff\hspace{-3mm}}&& \pi_2(\langle \texttt{a}_-,\texttt{a}_+\rangle)  &\subseteq (\Gamma_+\Delta_-)^\bot \\
  \end{align*}
\end{minipage}

\vspace{-5mm}

\hspace{-5mm}
\begin{minipage}{0.23\linewidth}
\begin{center}\boxed{
  \doubleLine
\AxiomC{$\Gamma,A,B \vdash \Delta$}
\RightLabel{$\otimes$L}
\UnaryInfC{$\Gamma, A \otimes B\vdash \Delta$}
\DisplayProof}
\end{center}
\end{minipage}
\hspace{-8mm}
\begin{minipage}{0.25\linewidth}
  \begin{align*}
    \texttt{a}_+  \texttt{b}_+ &\subseteq \Gamma_+\Delta_-^\bot  \\
    \text{ (Holds }&\text{definitionally)}\\
  \end{align*}
  \end{minipage}
\begin{minipage}{0.25\linewidth}
\begin{center}\boxed{
\AxiomC{$\Gamma \vdash A,\Delta$}
\AxiomC{$\hspace{-6mm}\Theta \vdash B,\Omega$}
\RightLabel{$\otimes$R}
\BinaryInfC{$\Gamma, \Theta \vdash A \otimes B, \Delta,\Omega$}
\DisplayProof}
\end{center}
\end{minipage}
  \begin{minipage}{0.32\linewidth}
    \begin{align*}
    (\Gamma_+\Delta_-\subseteq \texttt{a}_-^\bot) &\wedge (\Theta_+\Omega_- \subseteq \texttt{b}_-^\bot) \\
      {\scriptscriptstyle \implies} \Gamma_+\Delta_- \Theta_+&\Omega_-  \subseteq \texttt{a}_-^\bot  \texttt{b}_-^\bot\\
     {\scriptscriptstyle \iff} \Gamma_+\Delta_- \Theta_+&\Omega_-  \subseteq (\texttt{a}_- \parr \texttt{b}_- )^\bot\\
    \end{align*}
\end{minipage}

\vspace{-8mm}

\begin{minipage}{0.24\linewidth}
\begin{center}\boxed{
\doubleLine
\AxiomC{$\Gamma, A \vdash \Delta$}
\AxiomC{$\hspace{-6mm}\Gamma, B \vdash \Delta$}
\RightLabel{$\oplus$L}
\BinaryInfC{$\Gamma, A \oplus B \vdash \Delta$}
\DisplayProof}
\end{center}
\end{minipage}
  \begin{minipage}{0.31\linewidth}
    \begin{align*}
      (\Gamma_+\Delta_- \subseteq a_+^\bot)&\wedge (\Gamma_+\Delta_- \subseteq \texttt{b}_+^\bot) \\
    {\scriptscriptstyle \iff} \Gamma_+\Delta_-  &\subseteq \texttt{a}_+^\bot \wedge \texttt{b}_+^\bot \\
   {\scriptscriptstyle \iff} \Gamma_+\Delta_-  &\subseteq (\texttt{a}_+ \vee \texttt{b}_+)^\bot \\
    \end{align*}
\end{minipage}
\begin{minipage}{0.20\linewidth}
\begin{center}\boxed{
\AxiomC{$\Gamma \vdash A,\Delta$}
\RightLabel{$\oplus$R}
\UnaryInfC{$\Gamma \vdash A \oplus B,\Delta$}
\DisplayProof}
\end{center}
\end{minipage}
  \begin{minipage}{0.2\linewidth}
    \begin{align*}
      \Gamma_+\Delta_- &\subseteq \texttt{a}_-^\bot \\
     {\scriptscriptstyle \implies} \Gamma_+\Delta_- &\subseteq \texttt{a}_-^\bot \vee \texttt{b}_-^\bot \\
     {\scriptscriptstyle \iff}  \Gamma_+\Delta_-  &\subseteq (\texttt{a}_- \wedge \texttt{b}_-)^\bot \\
    \end{align*}
\end{minipage}

$\parr$ and $\&$ need not be checked, as they are definable as De Morgan duals of $\otimes$ and $\oplus$, i.e. $\bbrack{A\parr B}:=\bbrack{\neg(\neg A \otimes \neg B)}$ and $\bbrack{A\& B}:=\bbrack{\neg(\neg A \oplus \neg B)}$.
\end{proof}

Note \cref{prop:valid} did not require the elements of $\hat \cX$ to satisfy reflexivity, even though semantic clauses arose from a search for operations which are closed on the reflexive elements of $Q^2$.

\begin{proposition}
  The formulas of \cref{prop:valid} match the semantic clauses for $\neg, \otimes, \oplus, \parr,$ and $\&$ from \cref{def:connectives}.
\end{proposition}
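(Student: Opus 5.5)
The plan is to compare clause by clause, translating the operations of \cref{def:connectives} into quantale operations of $F^{\otimes\neg\oplus}(\cX)$. By \cref{lemma:rsrequiv}, roles are exactly the elements of this Girard quantale and $\RSR$ restricted to roles is its $(-)^\bot$. We therefore first identify symjunction and adjunction with quantale operations. Symjunction $A\sqcap B=(A\cup B)^{\bot\bot}$ is by definition the join $A\vee B$ of \cref{def:natgq}. Adjunction $A\sqcup B=\{(\Gamma\cup\Gamma',\Delta\cup\Delta')\}^{\bot\bot}$ is exactly $A\otimes B$, since the monoid operation on $\N[X+X]$ is pairwise multiset union. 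The rest of the argument is rewriting in Girard-quantale identities: involutivity $a^{\bot\bot}=a$ on closed elements, the De Morgan laws $(a\vee b)^\bot=a^\bot\wedge b^\bot$ and $(a\wedge b)^\bot = a^\bot\vee b^\bot$, and $a\parr b=(a^\bot\otimes b^\bot)^\bot$.

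The clauses then go as follows.

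\textbf{Negation.} It is literally identical in both places: $\langle\texttt{a}_-,\texttt{a}_+\rangle$.

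\textbf{Tensor.} The first components agree because $\texttt{a}_+\sqcup\texttt{b}_+=\texttt{a}_+\otimes\texttt{b}_+$. The second component $(\texttt{a}_-^\bot\sqcup\texttt{b}_-^\bot)^\bot=(\texttt{a}_-^\bot\otimes\texttt{b}_-^\bot)^\bot$ is by definition $\texttt{a}_-\parr\texttt{b}_-$.

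\textbf{Plus.} The first components agree because $\texttt{a}_+\sqcap\texttt{b}_+=\texttt{a}_+\vee\texttt{b}_+$. For the second, $(\texttt{a}_-^\bot\sqcap\texttt{b}_-^\bot)^\bot=(\texttt{a}_-^\bot\vee\texttt{b}_-^\bot)^\bot=\texttt{a}_-^{\bot\bot}\wedge\texttt{b}_-^{\bot\bot}=\texttt{a}_-\wedge\texttt{b}_-$.

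\textbf{Par and with.} Both sides define these as De Morgan duals via $\neg$: \cref{def:connectives} says so explicitly, and \cref{prop:valid} notes it at the end of its proof. We verify the twisted-quantale formulas directly. Swapping components of $\bbrack{A}$ and $\bbrack{B}$ and applying the tensor clause gives $\langle\texttt{a}_-\otimes\texttt{b}_-,\ \texttt{a}_+\parr\texttt{b}_+\rangle$, and negating gives $\langle\texttt{a}_+\parr\texttt{b}_+,\ \texttt{a}_-\otimes\texttt{b}_-\rangle$, as required. The same computation with $\oplus$ yields $\langle\texttt{a}_+\wedge\texttt{b}_+,\ \texttt{a}_-\vee\texttt{b}_-\rangle$.

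The main obstacle is the identification of $\sqcup$ with the quantale tensor and of $\sqcap$ with the quantale join. Specifically, we must check that the generating set $\{(\Gamma\cup\Gamma',\Delta\cup\Delta')\}$, taken over closed $A,B$, has the same $\bot\bot$-closure as in \cref{def:natgq}, where the tensor is formed after passing through the free quantale on lower sets. The point to verify is that, for $\Theta\in\N[X+X]$ and lower sets $A,B$, the conditions $\Theta+a+b\in\bot$ for all $a\in A,b\in B$ and for all $a\in A^{\bot\bot},b\in B^{\bot\bot}$ agree. This is the continuity/nucleus property of $(-)^{\bot\bot}$: $(A\otimes B)^{\bot\bot}=(A^{\bot\bot}\otimes B^{\bot\bot})^{\bot\bot}$, i.e.\ $(-)^{\bot\bot}$ is a quantic nucleus, as footnoted after \cref{def:natgq}. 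We also need that lower-set closure is absorbed by $\bot\bot$-closure, since $\bot\bot$-closed sets are down-closed for $\leq_\cX$. Once these identifications are in place, each clause is the one-line computation above.
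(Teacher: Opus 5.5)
Your proposal is correct and follows the same route as the paper. Like the paper, you identify $\sqcup$ with the quantale tensor, $\sqcap$ with the join $(A\cup B)^{\bot\bot}$, and $\RSR$ with $(-)^\bot$ via \cref{lemma:rsrequiv}, then use the Girard De Morgan identity for the conclusory role of $\oplus$. Your explicit De Morgan computation of the $\parr$ and $\&$ clauses and your nucleus check for $\sqcup$ simply spell out what the paper treats as immediate.
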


\begin{proof}
  This is immediate, modulo some differences in presentation. For $\bbrack{A\otimes B}^+$: the formula for $\otimes$ matches the $\sqcup$ operation. For $\bbrack{A\otimes B}^-$: the definition of $\parr$ was unfolded in \cref{sec:back}, and by \cref{lemma:rsrequiv} $\RSR$ corresponds to $(-)^\bot$. For $\bbrack{A\oplus B}^+$: the join operation of the quantale $F^{\otimes\neg\oplus}(\cX)$ is union followed by $(-)^{\bot\bot}$ closure, i.e. $\sqcap$. For $\bbrack{A\oplus B}^-$, observe that $x \wedge y = (x^\bot\vee y^\bot)^\bot$ for any elements $x,y$ in a Girard quantale. Again, $\parr$ and $\&$ do not need to be checked as they are De Morgan duals in both definitions.
\end{proof}

Even when we enforce the structural rule of reflexivity, the choice of $\bot$ in our starting frame $\cX$ is important for which instances of cut obtain among the atomic sequents.

\begin{center}
\AxiomC{$\Gamma \vdash A, \Delta$}
\AxiomC{$\hspace{-6mm}\Theta, A \vdash \Omega$}
\RightLabel{Cut}
\BinaryInfC{$\Gamma, \Theta\vdash \Delta,\Omega$}
\DisplayProof
\end{center}

The top sequent expresses $\Gamma_+\otimes \Delta_- \subseteq \texttt{a}_-^\bot$ for some $\Gamma,\Delta$ and that $\texttt{a}_+ \subseteq (\Theta_+\otimes \Omega_-)^\bot$ for some $\Theta,\Omega$. The bottom asserts $\Gamma_+\otimes \Delta_- \subseteq (\Theta_+\otimes \Omega_-)^\bot$. That this entailment holds for {\it all} $\Gamma,\Delta,\Theta,\Omega$ is for $\texttt{a}_-^\bot \subseteq \texttt{a}_+$. %, which is true iff $\texttt{a}_+^\bot\subseteq \texttt{a}_-$. 
In summary, reflexivity requires $\texttt{a}_+\subseteq \texttt{a}_-^\bot$ and cut requires $\texttt{a}_-^\bot\subseteq\texttt{a}_+$. 
This means that the structural rules of classical linear logic demand that $\texttt{a}_+=\texttt{a}_-^{\bot}$: the premisory and conclusory roles of a semantic value cannot be independently chosen.

\begin{definition}
  \label{def:super}
  Given a set of propositional variables $X$ and consequence relation $\vdash$ on the set of MALL formulas on $X$, we say $\vdash$ is {\it supralinear} if $\vdash_{\rm MALL}\ \subseteq\  \vdash$, where $\vdash_{\rm MALL}$ is the MALL provability relation. Analogously, a consequence relation on the set of Boolean formulas on $X$ is {\it supraclassical} if ${\vdash_{\rm classical}\ \subseteq\ \vdash}$.
\end{definition}

\begin{proposition}
  \label{prop:supralin}
  For any $\cX=(X,\bot) \in \cat{IF^r_\pm}$ with $\eta_\pm\colon \cX\to \hat \cX$, the consequence relation of $\hat \cX$ (which may fail to satisfy cut) is supralinear, and the atomic sequents that it validates are precisely $\bot$.
\end{proposition}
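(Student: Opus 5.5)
Two things need proving: supralinearity of the consequence relation on $\hat\cX=U_\pm F_\pm(\cX)$, and that its atomic sequents are exactly $\bot$.

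For supralinearity, I will use a standard MALL sequent calculus with explicit identity axiom and no cut rule. Cut elimination for MALL makes this legitimate: every MALL-provable sequent $\Gamma\vdash\Delta$ over atoms $X$ has a cut-free derivation. The consequence relation on formulas is the one induced by interpreting atoms via $\eta_\pm$ and connectives via the clauses of \cref{prop:valid}; $\Gamma\vdash\Delta$ holds when $\bbrack{\Gamma}\vDash\bbrack{\Delta}$ in $\hat\cX$. The proof is then an induction on cut-free derivations.

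There are three kinds of step.
\begin{itemize}
\item \textbf{Logical rules.} These are soundly validated by \cref{prop:valid}, in the downward direction, independently of $\cX$.
\item \textbf{Exchange.} This is automatic, since sequents are multisets.
\item \textbf{Identity axioms $A\vdash A$.} I must show $\texttt{a}_+\otimes\texttt{a}_-\subseteq\bot$ for every formula $A$, i.e.\ that $\bbrack{A}$ is a reflexive element of $\hat\cX$.
\end{itemize}

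The identity case is the key step. For atoms, $\eta_\pm$ lands in $U_\pm(\cQ)$, which by \cref{lemma:reflrestrict} consists only of reflexive elements. Directly, $\bbrack{x}=\langle(\{x\},\varnothing)^{\bot\bot},(\varnothing,\{x\})^{\bot\bot}\rangle$, and reflexivity of $x$ in $\cX$ gives $(\{x\},\{x\})\in\bot$. Hence $(\{x\},\varnothing)\in(\varnothing,\{x\})^\bot$, and taking closures yields $\texttt{a}_+\subseteq\texttt{a}_-^\bot$. For compound formulas, \cref{lemma:twistrestrict} says the reflexive pairs are closed under $\otimes\times\parr$ and $\vee\times\wedge$, hence also under their swaps and under negation (swap). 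So every $\bbrack{A}$ is reflexive.

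I expect the main obstacle to be bookkeeping rather than mathematics. The MALL units are not among the clauses, so I restrict to unit-free MALL, or interpret $1,\bot,0,\top$ by the corresponding twisted-quantale units and check them analogously. I also need the sequent-level form of the one-sided statements in \cref{prop:valid}, with contexts $\Gamma_+\Delta_-$, to hold for arbitrary context multisets. That follows from the definition of $\hat\bot$ as $\bigotimes\subseteq\bot$ together with associativity and commutativity of $\otimes$ in $\cQ$.

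For the atomic part, note that $\eta_\pm$ is a restriction of $\eta^{\otimes\neg\oplus}$. It is therefore conservative, by the remark after \cref{lemma:rsrequiv} extending \cref{prop:cons}. Unfolding \cref{def:conservative} for $\cat{IF_\pm}$: for multisets $\Gamma,\Delta$ of atoms, $(\Gamma,\Delta)\in\bot$ iff $(\eta_\pm\Gamma,\eta_\pm\Delta)\in\hat\bot$, i.e.\ iff $\bbrack{\Gamma}\vDash\bbrack{\Delta}$. So the atomic sequents validated are exactly $\bot$.

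Finally, the parenthetical about cut is not a claim that needs proof, only an observation. The preceding discussion already explains why: cut on $A$ requires $\texttt{a}_-^\bot\subseteq\texttt{a}_+$, which need not hold.
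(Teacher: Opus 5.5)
Your proposal is correct and follows the paper's own proof. Both use cut elimination, then induction on cut-free derivations, with identity axioms discharged by reflexivity and logical rules by \cref{prop:valid}, and both read off the atomic claim from conservativity of $\eta_\pm$. Your extra care (invoking \cref{lemma:twistrestrict} so that $\bbrack{A}$ lies in the reflexive frame for compound $A$, and flagging the MALL units) only makes explicit what the paper's appeal to $\hat\cX$ ``being a reflexive implication frame'' leaves implicit.
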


\begin{proof}
  Suppose $\Gamma \vdash_{\rm MALL}\Delta$. By cut-elimination for MALL \cite{girard1987linear}, $\Gamma \vdash_{\rm MALL}\Delta$ has a cut-free proof. The base case is that the proof is a single identity rule, which holds in $\hat \cX$ in virtue of being a reflexive implication frame. Each remaining step in the proof is a logical rule of MALL, which by \cref{prop:valid} holds in $\hat \cX$. Therefore $\Gamma \vdash_\cX \Delta$. That the valid atomic sequents are precisely $\bot$ is a restatement that $\eta_\pm$ is conservative.
\end{proof}

\vspace{-7mm}
\section{Idempotent logic}
\label{sec:idem}
\vspace{-2mm}

In this section, we restrict the $F_\pm\dashv U_\pm$ adjunction and obtain supraclassical consequence relations (rather than supralinear) parameterized by a choice of implication frame.

\begin{definition}
  \label{def:containment}
An element $x$ of an implication frame $(X,\bot\subseteq \N[X+X])$ is {\it idempotent} if $(\{x\},\{\})^\bot=(\{x,x\},\{\})^\bot$ and $(\{\},\{x\})^\bot=(\{\},\{x,x\})^\bot$. 
Note that, if all elements are idempotent, subsets of $\N[X+X]$ are in bijection with subsets of $\cP[X+X]$. 
Let a {\it containment implication frame} be an implication where all elements are idempotent and satisfy {\it containment}, i.e. $(\{x\},\{x\})^\bot=\cP[X+X]$.
Let $\iota^{\rm c}\colon \cat{IF_\pm^{c}\rightarrowtail IF^{\rm r}_\pm}$ be the subcategory of containment frames.\footnote{Reflexive frames could be restricted to those for which all elements are idempotent and, separately, to those for which all elements satisfy containment. However, we focus on the subcategory $\cat{IF^c_\pm}$ where both properties obtain.}
\end{definition}

Demanding containment requires $\bot$ to contain all sequents of the form $\Gamma, x \vdash x,\Delta$.\footnote{ 
In the bilateralist interpretation of the turnstile, these are all of the positions in which some proposition is being both asserted and denied in the same breath.
%While such `explicit' contradictions are out of bounds, these frames can still violate monotonicity and cut.
%If $a \vdash b$ and $b\vdash c$, then $a \wedge \neg c$ (the assertion that $a \nvdash c$) is `implicitly' contradictory.
}
Although we can restrict any reflexive implication frame to its subframe of idempotent, containment elements, this is {\it not} a functor $\cat{IF_\pm^{r}\to IF_\pm^{c}}$. 
We will repeat the tactic of \cref{lemma:reflrestrict}; however, we need to restrict both the implication frame side and the Girard quantale side of the adjunction.

\begin{definition}
  Let $\cat{GQ^{ji}}$ be the full subcategory of $\cat{GQ}$ where we restrict to {\it join-idempotent Girard quantales}, which are those for which every element $q \in \cQ$ can be expressed as some join of idempotent elements of $\cQ$.
\end{definition}

Let $U_\pm^{\rm c}:\cat{GQ^{ji}\to IF^{c}_\pm}$ send a Girard quantale $\cQ$ to the idempotent and containment-satisfying subframe of $U_\pm(\cQ)$, and let $F_\pm^{\rm c}\colon \cat{IF^c_\pm\to GQ^{ji}}$ be the corestriction of $\iota^{\rm c}\cdot F_\pm$ to join-idempotent Girard quantales.

\begin{lemma}
\label{lemma:undercont}
$U_\pm^{\rm c}$ and $F_\pm^{\rm c}$ are well-defined as functors, with $F_\pm^{\rm c}\dashv U_\pm^{\rm c}$.
\end{lemma}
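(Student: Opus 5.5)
The plan follows the template of \cref{lemma:reflrestrict}: restrict an existing adjunction on both sides and check that the hom-set bijection survives the restriction. There are three things to show. First, $F_\pm^{\rm c}$ lands in $\cat{GQ^{ji}}$. Second, $U_\pm^{\rm c}$ is functorial: a $\cat{GQ}$ morphism restricted to idempotent, containment elements stays within such elements and is still an $\cat{IF_\pm}$ morphism. Third, the bijection $\cat{IF^r_\pm}(\iota^{\rm c}\cX, U_\pm\cQ)\cong \cat{GQ}(F_\pm\iota^{\rm c}\cX,\cQ)$ restricts to a bijection $\cat{IF^c_\pm}(\cX,U^{\rm c}_\pm\cQ)\cong\cat{GQ^{ji}}(F^{\rm c}_\pm\cX,\cQ)$. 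Since $\cat{GQ^{ji}}$ is full in $\cat{GQ}$, the right-hand side needs no change. So it suffices to show that every $\cat{IF^r_\pm}$ morphism $\iota^{\rm c}\cX\to U_\pm\cQ$ factors through the subframe $U^{\rm c}_\pm\cQ$, and that this subframe inclusion is itself an $\cat{IF_\pm}$ morphism (so that continuity is not lost).

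For well-definedness of $F^{\rm c}_\pm$, I would work concretely. Every element of $F^{\otimes\neg\oplus}(\cX)$ is a $(-)^{\bot\bot}$-closed set of positions, hence the join of the closures of its principal lower sets $(\Gamma,\Delta)^{\downarrow\bot\bot}$. Each such generator is a tensor of the images $\{x\}$ on the left and $\{x\}$ on the right. In a containment frame every atom is idempotent: the equalities $(\{x\},\{\})^\bot=(\{x,x\},\{\})^\bot$ say precisely that the closed generator for $x$ satisfies $g\otimes g=g$. A tensor of idempotents in a commutative quantale is again idempotent. Hence every element is a join of idempotents, so the quantale lies in $\cat{GQ^{ji}}$.

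For the factorization, take $f\colon\cX\to U_\pm\cQ$. By adjunction $f$ is $\eta_\pm$ followed by $U_\pm(\bar f)$, so each $f(x)$ equals $\bar f$ applied componentwise to $\eta(x)=\bbrack{x}$. The pair $\bbrack{x}$ is idempotent, since both components are idempotent quantale elements. It also satisfies containment, because $(\{x\},\{x\})^\bot$ being everything says $\texttt{x}_+\otimes\texttt{x}_-\subseteq\bot$ holds robustly in every context. Quantale homomorphisms preserve $\otimes$, so they preserve idempotence of components. Preservation of containment should follow from $\bar f$ weakly preserving $\bot$ together with the fact that in the target the condition reduces to $q_+\otimes q_-\otimes r\le\bot$ for all $r$, equivalently $q_+\otimes q_-\le$ the bottom element. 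I expect this to be the main obstacle: homomorphisms preserve joins and hence the bottom, but I must confirm that containment of a pair in $U_\pm\cQ$ really is the condition $q_+\otimes q_-=0$, and that idempotence of a pair in the frame sense (equal $\bot$-complements) matches quantale idempotence. I would verify both directly using the $(Q^2,\otimes^2,\sigma,\bot^\downarrow\times Q)$ description of $U^\oplus_\pm$ and the $\dagger$-twist of $U^\neg$.

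Functoriality of $U^{\rm c}_\pm$ then follows from the same preservation facts, applied to an arbitrary $\cat{GQ^{ji}}$ morphism. Continuity of the restricted maps should be inherited as in \cref{lemma:reflrestrict}, because closures are computed in the ambient quantale. Naturality of the restricted bijection is automatic from naturality of $F_\pm\dashv U_\pm$, since all components are restrictions.
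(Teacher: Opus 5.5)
Your template is the paper's: corestrict $F_\pm$ to $\cat{GQ^{ji}}$ using idempotence of the atomic generators, check that quantale homomorphisms preserve idempotent components and the condition $q_+\otimes q_-=0$, and then restrict the hom-set bijection. Factoring $\iota^{\rm c}\cX\to U_\pm\cQ$ through the IC subframe via the transpose $\bar f$ is valid and more explicit than the paper. The gap is the continuity step, which is where the real work lies. You correctly isolate what must be shown. Let $C$ be the set of IC elements. For $A,B\subseteq\N[C+C]$, the inclusion $A^\bot\cap\N[C+C]\subseteq B^\bot\cap\N[C+C]$ must imply $A^\bot\subseteq B^\bot$. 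This claim gives both continuity of $U^{\rm c}_\pm\cQ\hookrightarrow U_\pm\cQ$ and continuity of $U^{\rm c}_\pm f$.

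The argument of \cref{lemma:reflrestrict} does not transfer. There, every position was replaced by a reflexive one with the same $\bot$-behaviour via $(\gamma_+,\gamma_-)\mapsto(\gamma_+,\gamma_+^\bot)$. A non-idempotent element of $\cQ$, however, is not a component of any IC pair, so positions over $C$ only probe $\cQ$ through idempotent values. Your proposal never uses that $\cQ$ is join-idempotent, and without that hypothesis the claim is false. The phase space $(\N,+,0,\{1\})$ yields the Girard quantale $\{0,I,\bot,\top\}$ with $\bot\otimes\bot=\top$, whose only idempotent below $\bot$ is $0$. Consider the IC positions $\langle\{(I,0)\},\{\}\rangle$ and $\langle\{(\top,0)\},\{\}\rangle$. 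Their complements inside $\N[C+C]$ coincide: both consist exactly of the positions of value $0$. Yet the position $\langle\{(\bot,I)\},\{\}\rangle$, built on the reflexive element $(\bot,I)$, lies in the ambient complement of the first but not of the second, since $I\otimes\bot\le\bot$ while $\top\otimes\bot=\top$. So ``closures are computed in the ambient quantale'' does not give continuity.

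The missing idea, which is the paper's, is to use join-idempotence of $\cQ$. Given $(\Theta,\Omega)\in A^\bot$, write $p_\Theta$ and $n_\Omega$ as joins of idempotents $e$ and $d$ respectively. Each position $\langle\{(e,0)\},\{(0,d)\}\rangle$ lies in $\N[C+C]$, because $0$ is idempotent and absorbing. It lies in $A^\bot$ by monotonicity of $\otimes$, hence in $B^\bot$ by hypothesis. Taking joins over all such $e,d$ yields $(\Theta,\Omega)\in B^\bot$. This step is exactly why the quantale side must be restricted to $\cat{GQ^{ji}}$ at all.
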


Given some $\cX:=(X,\bot\subseteq \cP[X+X])$ satisfying containment, the unit of $F^{\rm c}_\pm\dashv U^{\rm c}_\pm$ is a map from $\cX$ to a subframe of $(Q^2,\bot')$, where $Q$ is the underlying set of $\cQ$ the free Girard quantale on $\cX$ and $\bot' = \bot\times Q$. 
We now investigate sorts of operations make sense on this subset of idempotent and containment-satisfying pairs of quantale elements. 
First we note that, while the join of two idempotent elements in $\cQ$ is not necessarily idempotent, they can be joined in the quantale of idempotents.

\begin{lemma}
\label{lemma:idemsubquant}
  Let $\cQ:=(Q,\otimes,I,\vee)$ be any quantale. There is an idempotent quantale $(Q_\otimes,\otimes,\tilde\vee)$ with elements $Q_\otimes:=\{q \in Q\ |\ q \otimes q = q\}$, the same $\otimes$ as $\cQ$, and (binary) joins are given by $x\ \tilde \vee\ y := x \vee y \vee (x\otimes y)$.\footnote{This is a special case of general joins: $\tilde \bigvee_{i\in I} x_i := \bigvee_{\varnothing \subset J \subseteq_{\rm fin} I} \bigotimes_{j\in J} x_j$.}
\end{lemma}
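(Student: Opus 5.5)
The plan is to verify the quantale axioms for $(Q_\otimes,\otimes,\tilde\vee)$ directly. The order on $Q_\otimes$ is the one restricted from $\cQ$. Joins are given by the general formula in the footnote,
\[
\tilde\bigvee_{i\in I} x_i := \bigvee_{\varnothing\subset J\subseteq_{\rm fin} I}\ \bigotimes_{j\in J} x_j ,
\]
and the binary case $x\ \tilde\vee\ y = x\vee y\vee (x\otimes y)$ is what this gives for $J\in\{\{x\},\{y\},\{x,y\}\}$. Idempotence of the resulting quantale will be automatic, since every element of $Q_\otimes$ satisfies $q\otimes q=q$ by definition. The real content is therefore closure under $\otimes$, existence of arbitrary joins, and distributivity of $\otimes$ over $\tilde\bigvee$.

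\textbf{Step 1: $Q_\otimes$ is a commutative submonoid.} Since $I\otimes I=I$, the unit lies in $Q_\otimes$. For $x,y\in Q_\otimes$, commutativity and associativity give $(x\otimes y)\otimes(x\otimes y)=(x\otimes x)\otimes(y\otimes y)=x\otimes y$.

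\textbf{Step 2: $\tilde\bigvee$ is the join in $Q_\otimes$.} This is the key step, because joins in $Q_\otimes$ are not those of $\cQ$.

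First, $s:=\tilde\bigvee_i x_i$ is idempotent. Using that $\otimes$ distributes over $\vee$ in $\cQ$,
\[
s\otimes s=\bigvee_{J,K}\ \bigotimes_{J} x_j\otimes\bigotimes_{K} x_k .
\]
Since each $x_j$ is idempotent and $\otimes$ is commutative, each term equals $\bigotimes_{J\cup K}x_j$, which is one of the terms of $s$. Hence $s\otimes s\le s$. Conversely, the terms with $J=K$ give $s\le s\otimes s$.

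Second, $s$ is an upper bound: taking singletons $J=\{i\}$ gives $x_i\le s$.

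Third, $s$ is the least idempotent upper bound. If $u\in Q_\otimes$ satisfies $x_i\le u$ for all $i$, then monotonicity of $\otimes$ gives $\bigotimes_J x_j\le\bigotimes_J u$. For nonempty $J$, idempotence of $u$ gives $\bigotimes_J u=u$. Hence $s\le u$.

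The empty join is the bottom $0$ of $\cQ$. It lies in $Q_\otimes$ because $0\otimes 0=0$, as $\otimes$ preserves the empty join.

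\textbf{Step 3: distributivity.} For $a\in Q_\otimes$, distributivity in $\cQ$ gives
\[
a\otimes\tilde\bigvee_i x_i=\bigvee_J a\otimes\bigotimes_J x_j .
\]
Because $J$ is nonempty and $a$ is idempotent, we may duplicate $a$ once for each $j\in J$. This gives $a\otimes\bigotimes_J x_j=\bigotimes_J(a\otimes x_j)$, so the expression equals $\tilde\bigvee_i(a\otimes x_i)$. The empty-family case holds because $a\otimes 0=0$.

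I expect the main subtleties to be in Step 2 and Step 3. One is recognising that the join must range over all nonempty finite products rather than over the $x_i$ alone. The other is that both the least-upper-bound argument and distributivity depend on $J$ being nonempty, so the empty family has to be handled separately, as above.
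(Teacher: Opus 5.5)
Your proposal is correct and follows essentially the same route as the paper: the same nonempty-finite-products formula for $\tilde\bigvee$, and idempotence of the join via distributivity of $\otimes$ over $\vee$. It also uses singletons for the upper-bound property and idempotence of the competing upper bound for leastness. Where the paper just asserts distributivity ``because it is a join in $Q$'', your Step 3 supplies the missing term-matching $a\otimes\bigotimes_J x_j=\bigotimes_J(a\otimes x_j)$, which relies on idempotence of $a$ and nonemptiness of $J$. You also verify closure of $Q_\otimes$ under $\otimes$, the unit, and the empty join, which the paper leaves implicit.
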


The swap operation preserves idempotency and containment satisfaction. 
Although $\cQ\times \cQ^{\rm op}$ provided a natural set of operations on pairs of reflexive quantale elements, the $\parr$ operator cannot be restricted to idempotent elements,\footnote{Note that idempotent elements are not closed under $(-)^\bot$. 
Also, $\bot$ itself might not be an idempotent element, hence $Q_\otimes$ is not naturally a Girard quantale.} so we must consider other natural structures on pairs of quantale elements. 

\begin{lemma}
  \label{lemma:mixrestrict}

  The elements of $\hat \cX = U^{\rm c}_\pm(\cQ)$ for some Girard quantale $\cQ$ are closed under the quantale operations of the {\it mixed quantale} $\cQ_\otimes\times \cQ_\otimes^\vee$, where $\cQ_\otimes^\vee$ is the quantale obtained from $\cQ_\otimes$ with $\tilde \vee$ as both its monoidal operation and its join operation. 
\end{lemma}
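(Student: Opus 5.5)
The plan is to translate the three conditions defining elements of $\hat\cX=U^{\rm c}_\pm(\cQ)$ (reflexivity, idempotency, containment) into algebraic conditions on a pair $\langle a,b\rangle\in Q^2$. Then I check that each operation of $\cQ_\otimes\times\cQ_\otimes^\vee$ preserves these conditions, working coordinatewise where possible and using \cref{lemma:idemsubquant} for the idempotent parts.

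\textbf{Step 1 (unfolding the conditions).} By the description of $U^\oplus_\pm$, $U^\neg$ and $U^\otimes_\pm$, an element $\langle a,b\rangle$ lies in $\hat\bot$ exactly when $a\otimes b\le\bot$; this is reflexivity. Idempotency of $\langle a,b\rangle$ says that $(\{x\},\{\})^\bot=(\{x,x\},\{\})^\bot$, and similarly on the other side. In the Girard quantale this reads $a^\bot=(a\otimes a)^\bot$ and $b^\bot=(b\otimes b)^\bot$. Since $(-)^\bot$ is an involutive order-reversing bijection, this is equivalent to $a,b\in Q_\otimes$.

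Containment, $(\{x\},\{x\})^\bot=$ everything, says $a\otimes b\otimes q\le\bot$ for all $q$. This is $a\otimes b\le\top^\bot$. Since $\top^\bot$ is the least element $0$ of $\cQ$, containment becomes $a\otimes b=0$, which already implies reflexivity. I will also record that $0$ is absorbing for $\otimes$, because $\otimes$ preserves empty joins. So the elements of $\hat\cX$ are exactly the pairs of idempotents with $a\otimes b=0$.

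\textbf{Step 2 (the monoidal product).} The mixed product is $\langle a,b\rangle\otimes\langle c,d\rangle=\langle a\otimes c,\ b\,\tilde\vee\, d\rangle$. Both coordinates are idempotent by \cref{lemma:idemsubquant}. For containment, expand $(a\otimes c)\otimes(b\vee d\vee b\otimes d)$ by distributivity. Every summand contains either $a\otimes b$ or $c\otimes d$ as a factor, so it is $0$. The unit $\langle I,0\rangle$ is idempotent, and $I\otimes 0=0$, so the unit also lies in $\hat\cX$.

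\textbf{Step 3 (joins, the expected obstacle).} Idempotency of the coordinates of a join again comes from \cref{lemma:idemsubquant}, including the infinitary formula in its footnote. Containment is the delicate part. If the join is taken as $\tilde\vee$ in both coordinates, then expanding $(a\,\tilde\vee\, c)\otimes(b\,\tilde\vee\, d)$ produces cross terms $a\otimes d$ and $c\otimes b$, which need not vanish.

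So the first thing I would do is pin down the order on the second factor. It should play the role that $\cQ^{\rm op}$ plays in the twisted quantale of \cref{lemma:twistrestrict}, where the second coordinate of a join is a meet. With a dually ordered second factor, the second coordinate of a join is bounded above by both $b$ and $d$. Then every summand of the expansion is dominated by a term containing $a\otimes b$ or $c\otimes d$, and containment follows. For arbitrary joins I would argue the same way using the finite-subset formula.

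If instead the statement genuinely intends $\tilde\vee$ in both coordinates, I would try to show that the cross terms vanish for the pairs that actually arise, namely images of $\eta^{\rm c}$ and their closures. For this I would use join-idempotency of $\cQ$ (so that the reflector is $\cat{GQ^{ji}}$-valued) together with the fact that containment holds for all contexts. I expect this reconciliation of the join with containment to be the main difficulty. The remaining verifications are routine distributivity calculations.
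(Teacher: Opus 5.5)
Your Steps 1 and 2 match the paper's proof. That proof consists only of the containment check for the mixed product: expand $(a\otimes c)\otimes(b\vee d\vee b\otimes d)$ by distributivity, and note that every summand contains $a\otimes b=0$ or $c\otimes d=0$, with $0$ absorbing. Your idempotency and unit checks are harmless additions that the paper leaves implicit.

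Step 3 goes beyond the paper, which never addresses joins. The obstruction you found there is real: closure under the joins of $\cQ_\otimes\times\cQ_\otimes^\vee$ is false, so no argument can complete that step. Take any nontrivial $\cQ$. The pairs $\langle I,0\rangle$ and $\langle 0,I\rangle$ both lie in $U^{\rm c}_\pm(\cQ)$, since their coordinates are idempotent and $I\otimes 0=0$. Their join, however, is $\langle I\,\tilde\vee\,0,\ 0\,\tilde\vee\,I\rangle=\langle I,I\rangle$, and $I\otimes I=I\neq 0$.

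This kills your plan to show the cross terms vanish on the pairs that actually arise. Restricting to pairs coming from $\eta^{\rm c}$ does not help either. In the idempotent example of \cref{sec:exidem}, the atoms $\bbrack{a}=\langle X_\mp,\bot_\B\rangle$ and $\bbrack{b}=\langle X_\pm,X_\mp\rangle$ produce the cross term $X_\mp\otimes X_\mp=X_\mp\neq X_\bot$.

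Your dual-order reading does give a true closure result, because the second coordinate of the join is then below both $b$ and $d$. But it is a different statement, since the lemma explicitly makes $\tilde\vee$ the join of the second factor.

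The correct resolution is to claim and prove closure only under the mixed multiplication and its unit $\langle I,0\rangle$. That is all the paper's proof establishes, and all that is used afterwards: defining $\bbrack{A\wedge B}$ and, by De Morgan duality, $\bbrack{A\vee B}$.
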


\vspace{1mm}
We can use the mixed quantale multiplication operation to define the semantic formula for $\wedge$, i.e. ${\bbrack{A \wedge B} := \langle \texttt{a}_+\otimes \texttt{b}_+,\  \texttt{a}_-\ \tilde\vee\ \texttt{b}_-\rangle}$. 
We can then use the negation operation, which is the swapping of premisory and conclusory roles as before in \cref{prop:valid}, to define disjunction as the De Morgan dual: ${\bbrack{\texttt{A} \vee \texttt{B}} := \bbrack{\neg(\neg \texttt{A} \wedge \neg\texttt{B})}}$, recovering the semantic clauses of \cref{sec:back} for implication space semantics of classical logic. 

\vspace{0mm}
\begin{proposition}
  \label{prop:supra}
  For any $\cX=(X,\bot) \in \cat{IF^c_\pm}$ with $\eta^{\rm c}\colon \cX\to \hat \cX$, the consequence relation of $\hat \cX$ is supraclassical, and the atomic sequents that it validates are precisely $\bot$.
\end{proposition}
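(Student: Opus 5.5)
I will follow the strategy of \cref{prop:supralin}, replacing MALL by a cut-free sequent calculus for classical propositional logic. For that calculus I take the contractive NMMS-style rules of \cref{fig:seq}, closed under the structural rule of containment: every sequent $\Gamma, A \vdash A, \Delta$ is an axiom. Contraction and exchange are implicit because sequents are pairs of sets. This is essentially Ketonen's invertible calculus (G3cp). It is complete for classical logic, so $\Gamma \vdash_{\rm classical} \Delta$ has a cut-free, weakening-free derivation whose leaves are containment axioms. Supraclassicality then reduces to two claims about $\hat\cX$. First, every containment axiom holds in $\hat\cX$. Second, each logical rule is sound for the clauses $\bbrack{A\wedge B} = \langle \texttt{a}_+\otimes \texttt{b}_+,\ \texttt{a}_-\ \tilde\vee\ \texttt{b}_-\rangle$, $\bbrack{\neg A} = \langle \texttt{a}_-,\texttt{a}_+\rangle$, and $\vee$ defined by De Morgan.

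For the axioms, containment is built into $U^{\rm c}_\pm$: every element $\texttt{A}$ of $\hat\cX$ satisfies $(\{\texttt{A}\},\{\texttt{A}\})^{\hat\bot} = \cP[\hat X+\hat X]$. Hence $\Gamma,\texttt{A}\vdash \texttt{A},\Delta$ holds for every context. \cref{lemma:mixrestrict} guarantees that the interpretations of compound formulas stay inside $\hat\cX$, so they are again idempotent and containment-satisfying. Idempotency also lets multisets and sets be identified, so the contractive reading of the rules is legitimate.

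The soundness of $\neg$L and $\neg$R transfers verbatim from \cref{prop:valid}, because negation is still the swap. For $\wedge$L, the set $\Gamma, A\wedge B$ is interpreted by multiplying in $\texttt{a}_+\otimes\texttt{b}_+$, exactly as for $\otimes$L, so this rule holds definitionally. Since $\vee$ is a De Morgan dual, the rules $\vee$L and $\vee$R need not be checked separately. The remaining case is the contractive rule $\wedge{\rm R^c}$. Write $c := \Gamma_+\Delta_-$. The three premises say $c\subseteq \texttt{a}_-^\bot$, $c\subseteq\texttt{b}_-^\bot$, and $c\subseteq (\texttt{a}_-\otimes\texttt{b}_-)^\bot$. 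The conclusion asks for
\[ c\subseteq (\texttt{a}_-\vee\texttt{b}_-\vee(\texttt{a}_-\otimes\texttt{b}_-))^\bot = \texttt{a}_-^\bot\wedge\texttt{b}_-^\bot\wedge(\texttt{a}_-\otimes\texttt{b}_-)^\bot, \]
and this equality is an equivalence, since $(-)^\bot$ turns joins into meets in a Girard quantale. So the rule is sound in both directions, matching the bidirectional reading of $\wedge{\rm R^c}$.

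The main obstacle is the bookkeeping of contexts under idempotency. A formula may occur with multiplicity in a classical derivation, while $\hat\bot$ is computed on sets. I must check that repeated premises and conclusions do not change the product $\Gamma_+\Delta_-$. This follows because $\otimes$ of idempotent elements is idempotent. However, $\Delta_-$ consists of conclusory roles combined through $\tilde\vee$-closed elements, and I will verify that these are idempotent too, using \cref{lemma:idemsubquant}. Finally, the claim that the atomic sequents validated are exactly $\bot$ is conservativity of $\eta^{\rm c}$. This holds because $\eta^{\rm c}$ is a restriction of $\eta^{\otimes\neg\oplus}$, which is conservative by the argument of \cref{prop:cons}.
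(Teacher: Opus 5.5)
Your proof is correct, but it is not the argument the paper gives. The paper first defers to Brandom and Hlobil (soundness and completeness of the contractive implication space semantics for NMMS, and supraclassicality of NMMS under containment). It then offers a complementary algebraic proof. It computes both components of $\bbrack{(A\vee B)\wedge(A\vee\neg B)}$ using distributivity, idempotence and containment, and checks mutual $\vDash$-entailment with $\bbrack{A}$. Finally it invokes the Robbins-equation characterization to conclude that the Lindenbaum algebra is Boolean.

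You instead transplant the proof of \cref{prop:supralin}: a cut-free calculus with containment axioms. The axioms hold because every element of $\hat\cX$ has $\texttt{a}_+\otimes\texttt{a}_-$ equal to the bottom of $\cQ$. The rules are sound in both directions, and the only substantive case is $\wedge{\rm R^c}$, where the three premises are exactly the three joinands of $\tilde\vee$ once $\otimes$ is distributed over them. In effect you re-derive, inside the categorical setting, the two results the paper only cites.

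What each route buys: yours verifies the full multi-succedent relation directly and never needs $\vDash$ to be transitive or a congruence. That matters, since $\hat\cX$ can fail cut and monotonicity. The Lindenbaum-algebra argument, by contrast, relies on interderivability being a congruence and on relating $\vDash$ to the algebra's order, neither of which is automatic without cut. The paper's computation, in exchange, shows concretely how containment and the extra $\otimes$-term in $\tilde\vee$ force the Boolean laws.

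One step you should spell out: your calculus is not literally G3cp, since $\wedge{\rm R^c}$ and $\vee{\rm L^c}$ carry a third premise. Completeness therefore needs the observation that every premise of every rule is classically valid whenever its conclusion is (equivalently, weakening is admissible because axioms carry arbitrary contexts). Backward proof search then terminates in atomic sequents with overlap, which are axioms.
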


\begin{proof}
  This result was also presented as a corollary of \cite[Thm. 76]{hlobil2025reasons}, which shows that the contractive implication space semantics is sound and complete for NMMS, and \cite[Prop 25]{hlobil2025reasons}, which shows that, with containment, NMMS is supraclassical. 
  
  We offer a complementary proof via showing that the Lindenbaum algebra of the semantic consequence relation is a Boolean algebra: any set obeying the Robbins equation $\neg(A \vee B) \vee \neg (A \vee \neg B) = \neg A$ is a Boolean algebra \cite{mccune1997solution}. To apply this theorem, $\vee$ must additionally be commutative and associative, which holds in our case because it consists in $\otimes$ in one component and $\tilde\vee$ in the other component, both of which are commutative and associative. 
  
We can simplify this equation to $(A \vee B) \wedge (A \vee \neg B) = A$ using De Morgan duality and double negation elimination, which hold in general for our semantic clauses. 
$\bbrack{(A \vee B) \wedge (A \vee \neg B)}$ is a pair of quantale elements. We compute these values below, with the first element on the left and second element on the right:\footnote{For space we represent $\otimes$ by concatenation. We also liberally apply idempotence of $\otimes$ and use $I$ as the monoidal unit.}

\begin{minipage}{0.55\linewidth}
  \begin{align*}
    \bbrack{(A \vee B) \wedge &(A \vee \neg B)}^+ &&\\
  \bbrack{A\vee B}^+ &\bbrack{A\vee \neg B}^+ &&\text{Evaluate}\\
  (\texttt{a}_+ \vee \texttt{b}_+ \vee (\texttt{a}_+ \texttt{b}_+))& (\texttt{a}_+ \vee \texttt{b}_- \vee (\texttt{a}_+ \texttt{b}_-)) &&\text{Evaluate}\\
  \texttt{a}_+ \vee \texttt{a}_+\texttt{b}_+ \vee \texttt{a}_+\texttt{b}_-  &\vee \texttt{a}_+\texttt{b}_+\texttt{b}_- \vee \texttt{b}_+\texttt{b}_- &&\text{Distributivity}\\
  \texttt{a}_+  (I \vee \texttt{b}_+\vee \texttt{b}_-&\vee \texttt{b}_+\texttt{b}_-) \vee \texttt{b}_+\texttt{b}_-  &&\text{Factor out }\texttt{a}_+\\
  \texttt{a}_+  (I &\vee \texttt{b}_+\vee \texttt{b}_-) &&\text{Containment}\\
\end{align*}
\end{minipage}
\begin{minipage}{0.45\linewidth}
  \begin{align*}
    \bbrack{(A \vee B) &\wedge (A \vee \neg B)}^- &&\\
    \bbrack{A\vee B}^- \ \vee\  &\bbrack{A\vee \neg B}^- &&\\
    \vee\ \bbrack{A&\vee B}^- \bbrack{A\vee \neg B}^- &&\text{Evaluate}\\
  (\texttt{a}_-  \texttt{b}_-)\vee (\texttt{a}_-  &\texttt{b}_+) \vee (\texttt{a}_- \texttt{b}_+\texttt{b}_-) &&\text{Evaluate}\\
  \texttt{a}_-  (\texttt{b}_- &\vee  \texttt{b}_+ \vee \texttt{b}_+\texttt{b}_-) &&\text{Factor out }\texttt{a}_-\\
  \texttt{a}_-  (&\texttt{b}_- \vee  \texttt{b}_+ ) &&\text{Containment}\\
\end{align*}
\end{minipage}
\vspace{-4mm}

Therefore $\bbrack{(A \vee B) \wedge (A \vee \neg B)}={\langle a_+ \otimes (I\vee b_+\vee b_-),\ \texttt{a}_- \otimes (\texttt{b}_- \vee  \texttt{b}_+ )\rangle}$. We next show the desired equality in the Lindenbaum algebra by showing that $\bbrack{(A \vee B) \wedge (A \vee \neg B)} \vDash \bbrack{A}$ and showing that ${\bbrack{A} \vDash \bbrack{(A \vee B) \wedge (A \vee \neg B)}}$.

To prove $\bbrack{(A \vee B) \wedge (A \vee \neg B)} \vDash \bbrack{A}$, we check if $(a_+ \otimes (I \vee \texttt{b}_+\vee \texttt{b}_-)) \otimes a_- \leq \bot$, which is true because of containment: $a_+ \otimes a_-\otimes (I \vee \texttt{b}_+\vee \texttt{b}_-)=0 \leq \bot$. 
Likewise, $\bbrack{A} \vDash \bbrack{(A \vee B) \wedge (A \vee \neg B)}$ amounts to checking $a_+ \otimes (a_- \otimes (\texttt{b}_- \vee  \texttt{b}_+)) \leq \bot$, which also holds due to containment. 

Therefore, independent of our choice of $\bot$ in our starting containment implication frame, we satisfy the structural laws of classical (multisuccedent) logic, and we also satisfy the logical laws too in virtue of being a Boolean algebra. 
\end{proof}

\vspace{-4mm}
\section{Conclusion}
\vspace{-3mm}

\begin{minipage}{0.58\linewidth}
We have shown the implication space semantics arises naturally as the unit to an adjunction between categories of implication frames and Girard quantales. 
The semantic clauses of the MALL connectives arises from restricting to reflexive implication frames, whereas the clauses for classical logic arise from restricting to idempotent and containment-satisfying frames.  
These adjunctions are summarized in the diagram on the right.
The semantic clauses for linear logic formulas arise naturally as operations on pairs of quantale elements that preserve reflexivity, while the classical logic formulas likewise arise on pairs of idempotent quantale elements that preserve containment-satisfaction. 
\end{minipage}\hspace{4mm}
\begin{minipage}{0.38\linewidth}
  \vspace{-3mm}
\[\begin{tikzcd}[cramped]
	{\mathsf{IF}} & {\mathsf{PS}} && {\mathsf{GQ}} \\
	{\mathsf{IF_\pm}} & {\mathsf{PS_\pm}} & {\mathsf{PS^\dagger}} & {\mathsf{GQ}} \\
	{\mathsf{IF^r_\pm}} &&& {\mathsf{GQ}} \\
	{\mathsf{IF^{c}_\pm}} &&& {\mathsf{GQ^{ji}}}
	\arrow[""{name=0, anchor=center, inner sep=0}, "{F^\otimes}", shift left, curve={height=-6pt}, from=1-1, to=1-2]
	\arrow[""{name=1, anchor=center, inner sep=0}, "{U^\otimes}", shift right, curve={height=-6pt}, from=1-2, to=1-1]
	\arrow[""{name=2, anchor=center, inner sep=0}, "{F^\oplus}", shift left, curve={height=-6pt}, from=1-2, to=1-4]
	\arrow[""{name=3, anchor=center, inner sep=0}, "{U^\oplus}", shift right, curve={height=-6pt}, tail, from=1-4, to=1-2]
	\arrow[tail, from=2-1, to=1-1]
	\arrow[""{name=4, anchor=center, inner sep=0}, "{F^\otimes_\pm}", curve={height=-6pt}, from=2-1, to=2-2]
	\arrow[tail, from=2-2, to=1-2]
	\arrow[""{name=5, anchor=center, inner sep=0}, "{U^\otimes_\pm}", curve={height=-6pt}, from=2-2, to=2-1]
	\arrow[""{name=6, anchor=center, inner sep=0}, "{F^\neg}", curve={height=-6pt}, from=2-2, to=2-3]
	\arrow[""{name=7, anchor=center, inner sep=0}, "{U^\neg}", curve={height=-6pt}, from=2-3, to=2-2]
	\arrow[""{name=8, anchor=center, inner sep=0}, "{F^\oplus_\pm}", curve={height=-6pt}, from=2-3, to=2-4]
	\arrow[""{name=9, anchor=center, inner sep=0}, "{U^\oplus_\pm}", curve={height=-6pt}, tail, from=2-4, to=2-3]
	\arrow["{\iota^{\rm r}}", tail, from=3-1, to=2-1]
	\arrow[equals, from=3-4, to=2-4]
	\arrow["{U_\pm}"', from=3-4, to=3-1]
	\arrow[tail, from=4-4, to=3-4]
	\arrow["{\iota^{\rm c}}", tail, from=4-1, to=3-1]
	\arrow["{U^{\rm c}}"', from=4-4, to=4-1]
	\arrow["\dashv"{anchor=center, rotate=-90}, draw=none, from=0, to=1]
	\arrow["\dashv"{anchor=center, rotate=-90}, draw=none, from=2, to=3]
	\arrow["\dashv"{anchor=center, rotate=-90}, draw=none, from=4, to=5]
	\arrow["\dashv"{anchor=center, rotate=-90}, draw=none, from=6, to=7]
	\arrow["\dashv"{anchor=center, rotate=-90}, draw=none, from=8, to=9]
\end{tikzcd}\]
\begin{center}
\end{center}
\end{minipage}

\vspace{2mm}

In \cite{hlobil2025reasons}, the non-contractive and contractive implication space semantics are told as related but disjoint stories; however, we have demonstrated that the contractive implication space semantics, $F^{\rm c}\dashv U^{\rm c}$, is a restriction of non-contractive implication space semantics, $F_\pm\dashv U_\pm$. 
The difference between non-contractive variants of the $\wedge R$ and $\vee L$ NMMS rules and their contractive counterparts (which include a third top sequent) is explained by them sharing the form of a quantale join, where the extra top sequent comes from the joins being computed in the quantale restricted to idempotent elements.
We also showed how imposing the conditions of reflexivity and containment+idempotence on an implication frame respectively generate supralinear and supraclassical semantic consequence relations. These are examples of properties which a domain of reasoning (as modeled by an implication frame) must satisfy for a particular kind of logic to be applicable.

{\bf Software implementation:} ROLE.jl \cite{brown2026library} is a Julia library which implements the concepts in this paper, in particular the declaration of (idempotent) implication frames $\cX:=(X,\bot)$ and the evaluation of semantic consequence between elements of $\hat \cX$, specified by logically complex formulas. The tests in this repository are also a source of worked examples of implication frames and the implication space semantics which are too large to present in this current work.

{\bf Future work}: A formal statement relating between traditional categorical logic and logical expressivism would account for their various (dis)similarities. NMMS and implication space semantics are intended to generalize various forms of {\it propositional} logic; however, there are many directions of possible generalization.
Informally, we could say implication frames are enriched in boolean values, but enriching in sets (yielding proof-relevant, predicate logics) or real numbers (probabilistic frames) would be particularly valuable if they are to model social norms and practices that regulate allowable inferences. 
Framing the underlying data structures and operations categorically is a key first step to this. 
Furthermore, additional exploration is needed of the possible applications of (co)limits and other structures within the category of implication frames, as it is important for the computational scalability of ROLE.jl that frames which consist of largely disjoint vocabularies can be implicitly represented as colimits in $\cat{IF}$ or related categories, such as the Kleisli category of the adjunctions we've described. 

{\bf Special thanks:} I thank Kevin Carlson for providing frequent guidance on diverse topics and Lucy Horowitz for helpful conversations and ROLE.jl contributions. 
The Research on Logical Expressivism working group (including Robert Brandom, Ulf Hlobil, Ryan Simonelli, Rea Golan, Shuhei Shimamura) has also been helpful in articulating and explaining logical expressivism.

\bibliographystyle{splncs04}
\bibliography{bibliography}

\crefalias{section}{appendix}

\appendix

\section{Implication space semantics examples}
\label{sec:ex}

Note in these examples we use the notation developed in the main body of the text rather than the introduction. Therefore, rather than $\sqcup$ we use $\otimes$, and rather than $\sqcap$ we use $\vee$.\footnote{With the caveat that there is the classical propositional connective $\vee$ distinct from the join operation $\vee$ on implicational roles in $\R$, which are shown in the main text to correspond to elements of a Girard quantale.}

\subsection{Idempotent example}
\label{sec:exidem}

In this example, we attempt to construct a simple idempotent implication frame whose implication space semantic consequence relation is supraclassical while also exhibiting a failure of monotonicity.

Consider an idempotent implication frame (\cref{def:signedif}) $\cX:=(X,\bot_\B)$, where $X := \{a,b\}$ (therefore the set ${\cP[X+X]}$ of {\it positions} has elements $0$, $a^+$, $a^-$, $a^+b^-$ and so on) and let the {\it incompatible} positions ${\bot_\B}$ be: $\{0,\ a^-,\ a^-b^-,\ a^+a^-,\ a^+a^-b^-,\ b^+b^-,\ b^+a^-b^-,\ a^+b^+,\ a^+b^+a^-,\ a^+b^+b^-,\ a^+b^+a^-b^-\}$.
That is: inferences like $a \vdash a,b$ are declared by fiat to obtain by this frame, i.e. $\cX$ establishes a norm where it is out-of-bounds to reject the position $\{a,b\}$ while accepting the position $\{a\}$. As another example, the inference $\vdash a$ is declared to obtain, i.e. it is out-of-bounds to simply reject $\{a\}$ given no other context. Moreover, the remaining inferences do {\it not} hold according to the frame, e.g. $b\nvdash a$ and $\nvdash b$. This $\bot_\B$ data is also visualized below, where a $\checkmark$ represents an element in $\bot_\B$:

\begin{center}
\begin{tabular}{||c||c|c|c|c||}
\hline\hline $\bot_\B$    & 0                    & $a^-$                & $b^-$            & $a^-b^-$             \\
\hline\hline $0$      & $\checkmark$         & ${\checkmark}$ & ${\times}$ & ${\checkmark}$ \\
\hline $a^+$    & ${\times}$     & $\checkmark$         & ${\times}$ & $\checkmark$         \\
\hline $b^+$    & ${\times}$     & ${\times}$     & $\checkmark$     & $\checkmark$         \\
\hline $a^+b^+$ & ${\checkmark}$ & $\checkmark$         & $\checkmark$     & $\checkmark$         \\
\hline\hline \end{tabular}
\end{center}

By logical elaboration (\cref{def:elab}), there is another implication frame $\cX':=({\rm BF}(X), \bot')$. To determine what $\bot'$ is based on $\bot_\B$, we can use the NMMS rules of \cref{fig:seq}. For example, the $\neg R$ rule tells us that $\vdash a,b,\neg a$ obtains in $\cX'$ because $\cX$ validates $a \vdash a,b$. Likewise, the $\wedge R^{\rm c}$ rule allows us to derive $a,b\vdash a\wedge b$ in $\cX'$ from the facts that, in $\cX$, we have $a,b \vdash a$ and $a,b\vdash b$ and $a,b\vdash a,b$.

We now show the computation of ranges of subjunctive robustness ($\RSR$, or $(-)^\bot$, \cref{def:rsr}) for the sixteen positions of $\cX$. Each $\RSR$ value is a subset of $\cP[X+X]$. 
For example, $(a^+)^\bot$ and $(a^-)^\bot$ are the sets of positions incompatible when $a^+$ (resp. $a^-$) is added in. These sets are  $X_\pm:=\top\setminus\cP[\{a^+,b^-\}]$ and $X_b:=\top \setminus \{b^+,b^+a^-\}$ respectively, where $\top:=\cP[X+X]$. We can visually depict these sets below.

\begin{minipage}{0.45\linewidth}
\begin{center}
\begin{tabular}{||c||c|c|c|c||}
\hline\hline $X_\pm$   & 0                    & $a^-$                & $b^-$            & $a^-b^-$             \\
\hline\hline $0$  &  $\times$  & 	$\checkmark$	 & 	$\times$	 &  	$\checkmark$	\\
\hline $a^+$     &   $\times$   &   $\checkmark$   &  $\times$		&   $\checkmark$    \\
\hline $b^+$     &   $\checkmark$   &  $\checkmark$    &   $\checkmark$   &     $\checkmark$     \\
\hline $a^+b^+$  &  $\checkmark$    &   $\checkmark$   &  $\checkmark$    &    $\checkmark$      \\
\hline\hline \end{tabular}
\end{center}
\end{minipage}
\begin{minipage}{0.45\linewidth}
\begin{center}
\begin{tabular}{||c||c|c|c|c||}
\hline\hline $X_b$   & 0                    & $a^-$                & $b^-$            & $a^-b^-$             \\
\hline\hline $0$  &  $\checkmark$  & 	$\checkmark$	 & 	$\checkmark$	 &  	$\checkmark$	\\
\hline $a^+$     &   $\checkmark$   &   $\checkmark$   &  $\checkmark$		&   $\checkmark$    \\
\hline $b^+$     &   $\times$   &  $\times$    &   $\checkmark$   &     $\checkmark$     \\
\hline $a^+b^+$  &  $\checkmark$    &   $\checkmark$   &  $\checkmark$    &    $\checkmark$      \\
\hline\hline \end{tabular}
\end{center}
\end{minipage}

We can also define $X_\mp :=\top\setminus\cP[\{a^-,b^+\}]$. 
The value of $(-)^\bot$ for each position in $\cX$ is below:

\begin{center}
\begin{tabular}{||c||c|c|c|c||}
\hline\hline $(-)^\bot$    & 0                    & $a^-$                & $b^-$            & $a^-b^-$             \\
\hline\hline $0$      & $\bot_\B$         & $X_b$ & $X_\pm$ & $\top$ \\
\hline $a^+$    & $X_\pm$     & $\top$         & $X_\pm$ & $\top$         \\
\hline $b^+$    & $X_\mp$     & $X_\mp$     & $\top$     & $\top$         \\
\hline $a^+b^+$ & $\top$ & $\top$         & $\top$     & $\top$     \\
\hline\hline   
\end{tabular}
\end{center}

Note $\RSR$ is defined over {\it sets} of positions of $\cX$, not just individual positions like above. However, we can derive the remaining $\RSR$ values by taking intersections of the five $\RSR$ values we have. This yields one more possible value, $X_\bot=\{a^+,b^+\}^\bot = X_\pm\cap X_\mp$. Being able to compute $\RSR$ on entire sets of positions allows us to apply the operation twice to original positions of $\cX$, shown below.

\vspace{2mm}

\begin{center}
\begin{tabular}{||c||c|c|c|c||}
\hline\hline $(-)^{\bot\bot}$    & 0                    & $a^-$                & $b^-$            & $a^-b^-$             \\
\hline\hline $0$      & $X_b$         & $\bot_\B$ & $X_\mp$ & $X_\bot$ \\
\hline $a^+$    & $X_\mp$     & $\bot_\B$         & $X_\mp$ & $X_\bot$         \\
\hline $b^+$    & $X_\pm$     & $X_\pm$     & $X_\bot$     & $X_\bot$         \\
\hline $a^+b^+$ & $X_\bot$ & $X_\bot$         & $X_\bot$     & $X_\bot$     \\
\hline\hline   
\end{tabular}
\end{center}

\vspace{2mm}

There are six possible values of $\RSR$, so $\R=\{X_\bot,\bot_\B,X_b,X_\pm,X_\mp,\top\}$. Their inclusion poset is drawn below. 

\vspace{2mm}

\[\begin{tikzcd}[cramped]
	& \top & \\
	{X_\pm} && {X_b} \\
	{\bot_\mathfrak{B}} && {X_\mp} \\
	& {X_\bot}
	\arrow[from=2-1, to=1-2]
	\arrow[from=2-3, to=1-2]
	\arrow[from=3-1, to=2-1]
	\arrow[from=3-1, to=2-3]
	\arrow[from=3-3, to=2-3]
	\arrow[from=4-2, to=3-1]
	\arrow[from=4-2, to=3-3]
\end{tikzcd}\]

\vspace{2mm}

The $\vee$ operation table (with unit $X_\bot$) is on the left, and the $\otimes$ operation table (with unit $0^{\bot\bot}=X_b$) is on the right:

\vspace{2mm}

\begin{minipage}{0.45\linewidth}
\begin{center}
\begin{tabular}{||c||c|c|c|c|c|c||}
\hline\hline $\vee$  & $X_b$   & $X_\bot$ & $\bot_\B$ & $X_\pm$ & $X_\mp$ & $\top$ \\
\hline\hline   
$X_b$  & $X_b$   & $X_b$ & $X_b$ & $\top$ & $X_b$ & $\top$ \\ \hline  % 1
$X_\bot$  & $X_b$   & $X_\bot$ & $\bot_\B$ & $X_\pm$ & $X_\mp$ & $\top$ \\ \hline  % 2
$\bot_\B$  & $X_b$   & $\bot_\B$ & $\bot_\B$ & $X_\pm$ & $X_b$ & $\top$ \\ \hline % 3
$X_\pm$  & $\top$   & $X_\pm$ & $X_\pm$ & $X_\pm$ & $\top$ & $\top$ \\ \hline  % 4
$X_\mp$  & $X_b$   & $X_\mp$ & $X_b$ & $\top$ & $X_\mp$ & $\top$ \\ \hline  % 5
$\top$  & $\top$   & $\top$ & $\top$ & $\top$ & $\top$ & $\top$ \\ \hline\hline % 6
\end{tabular}
\end{center}
\end{minipage}
\begin{minipage}{0.45\linewidth}
\begin{center}
\begin{tabular}{||c||c|c|c|c|c|c||}
\hline\hline $\otimes$  & $X_b$   & $X_\bot$ & $\bot_\B$ & $X_\pm$ & $X_\mp$ & $\top$ \\
\hline\hline   
$X_b$  & $X_b$   & $X_\bot$ & $\bot_\B$ & $X_\pm$ & $X_\mp$ & $\top$ \\ \hline  % 1
$X_\bot$  & $X_\bot$   & $X_\bot$ & $X_\bot$ & $X_\bot$ & $X_\bot$ & $X_\bot$ \\ \hline  % 2
$\bot_\B$  & $\bot_\B$   & $X_\bot$ & $\bot_\B$ & $X_\pm$ & $X_\bot$ & $X_\pm$ \\ \hline % 3
$X_\pm$  & $X_\pm$   & $X_\bot$ & $X_\pm$ & $X_\pm$ & $X_\bot$ & $X_\pm$ \\ \hline  % 4
$X_\mp$  & $X_\mp$   & $X_\bot$ & $X_\bot$ & $X_\bot$ & $X_\mp$ & $X_\mp$ \\ \hline  % 5
$\top$  & $\top$   & $X_\bot$ & $X_\pm$ & $X_\pm$ & $X_\mp$ & $\top$ \\ \hline\hline % 6
\end{tabular}
\end{center}
\end{minipage}

\vspace{2mm}

Recall $\C:=\R^2$, the space of semantic values for Boolean formulas built out of $X$. We have base cases of $\bbrack{a}=\langle (a^+)^{\bot\bot},(a^-)^{\bot\bot} \rangle=\langle X_\mp,\bot_\B \rangle$ and $\bbrack{b}=\langle X_\pm,X_\mp \rangle$. We can use these semantic values to demonstrate that $\Gamma\vdash \Delta$ in $\cX$ is true iff $\bbrack{\Gamma}\vDash \bbrack{\Delta}$. For example, one can check if $\bbrack{a}\vDash \bbrack{a},\bbrack{b}$, which computed by checking: 

\begin{align*}
  \pi_1(\langle X_\mp,\bot_\B \rangle) \otimes \pi_2(\langle X_\mp,\bot_\B \rangle) \otimes \pi_2(\langle X_\pm,X_\mp \rangle) &\subseteq \bot_\B \\ 
  X_\mp\otimes \bot_\B \otimes X_\mp  &\subseteq \bot_\B \\ 
 X_\bot &\subseteq \bot_\B
\end{align*}

We also can compute semantic values of complex expressions. For example, we reasoned syntactically (on the basis of $\wedge R^{\rm c}$) that $a,b\vdash a\wedge b$, so we can check that $\bbrack{a},\bbrack{b}\vDash\bbrack{a \wedge b}$ with the following computation:

\begin{align*}
  \pi_1(\bbrack{a}) \otimes \pi_1(\bbrack{b}) \otimes \pi_2(\bbrack{a \wedge b}) &\subseteq \bot_\B \\ 
  \pi_1(\langle X_\mp,\bot_\B \rangle) \otimes \pi_1(\langle X_\pm,X_\mp \rangle) \otimes \pi_2(\langle X_\mp \otimes X_\pm,\ \bot_\B\vee X_\mp \vee (\bot_\B\otimes X_\mp)\rangle) &\subseteq \bot_\B \\ 
  X_\mp \otimes X_\pm \otimes (\bot_\B\vee X_\mp \vee (\bot_\B\otimes X_\mp)) &\subseteq \bot_\B \\ 
  X_\mp \otimes X_\pm \otimes (\bot_\B\vee X_\mp \vee X_\bot) &\subseteq \bot_\B \\ 
  X_\mp \otimes X_\pm \otimes (\bot_\B\vee X_\mp) &\subseteq \bot_\B \\ 
  X_\mp \otimes X_\pm \otimes X_b &\subseteq \bot_\B \\ 
  X_\bot &\subseteq \bot_\B \\ 
\end{align*}

By \cref{prop:supra}, the $\vDash$ consequence relation is supraclassical (because in $\cX$ every position which has overlap between premises and consequences is in $\bot_\B$), so we also know a large class of entailments which hold without having to explicitly calculate them. However, even though it is supraclassical, it is not monotonic due to $\vDash \bbrack{a}$ and $\bbrack{b} \nvDash \bbrack{a}$.

\subsection{Non-idempotent example}
\label{sec:exnonidem}

In this example, we attempt to construct a simple implication frame whose implication space semantic consequence relation is supralinear while also exhibiting a failure of transitivity (and, therefore, a failure of cut).

Let $X=\{\bullet\}$. Multisets of $X$ can be identified with natural numbers, thus the set of positions is $\N^2$. 
For brevity we'll denote pairs with concatenation, e.g. $(m,n)$ is written $mn$. 
Let $R=\{ii\ |\ i \in \N\}$ and define $\bot_\B=\{01,12\}\cup R$. 
Our implication frame of interest is then $\cX:=(X,\bot_\B)$. That is, $\cX$ endorses the implications $\vdash \bullet$ and $\bullet\vdash \bullet,\bullet$ (which could also be written $0\vdash 1$ and $1 \vdash 2$ respectively) as well as all implications which have the same number of $\bullet$ on each side.

We abbreviate $\{(m+i,n+i)\ |\ (m,n)\in\N^2\}$ as $mnR$. 
We also define an operation on $\N^2$ sending pairs to the smallest pair that has the same difference: $\overline{nm}=(n',m')$ where $m'=n-m$ and $n'=0$ if $m\leq n$, otherwise $n'=m-n$ and $m'=0$. 
In the following table, the $i$'th row and $j$'th column contains the value of $ij^\bot$.

\begin{center}
\begin{tabular}{||c||c|c|c|c||}
\hline\hline $(-)^\bot$    & 0                    & 1                & 2            & $m>2$             \\
\hline\hline $0$      & $\{01,12\}\cup R$         & $\{00,11\}\cup 10R$ & $\{10\}\cup 20R$ & $m0R$ \\
\hline 1    & $\{02\}\cup 01R$     & $\{01\}\cup R$         & $\{00\}\cup 10R$ & $(m-1)0R$         \\
\hline $n>1$    & $0nR$     & $(n-1)0R$     & $(n-2)0R$     & $\overline{nm}R$         \\
\hline\hline   
\end{tabular}
\end{center}

Note that $nmR^\bot=\overline{nm}R$.  Let's compute the $(-)^{\bot\bot}$ values:

\begin{itemize}
  \item $00^{\bot\bot}=(01^\bot\cap 02^\bot) \cap R^\bot=02^\bot\cap R=(\{00\}\cup 10R)\cap R=\{00\}$
  \item $01^{\bot\bot}=(00^\bot\cap 11^\bot) \cap 10R^\bot=11^\bot \cap 10R^\bot=(\{01\}\cup R)\cap \overline{10}R=(\{01\}\cup R)\cap01R=\{01\}$
  \item $02^{\bot\bot}=10^\bot\cap 20R^\bot = (\{02\}\cup 01R)\cap\overline{20}R=\{02\}\cap  02R = \{02\}$
  \item $10^{\bot\bot}=02^\bot\cap 01R^\bot = (\{10\}\cup 20R)\cap\overline{01}R=\{10\}\cap  10R = \{10\}$
  \item $11^{\bot\bot}=01^\bot\cap R^\bot = (\{00,11\}\cup 10R)\cap R=\{00,11\}\cap  R = \{00, 11\}$
  \item $12^{\bot\bot}=00^\bot\cap 10R^\bot = (\{01,12\}\cup R)\cap \overline{10}R=\{01,12\}\cap  01R = \{01, 12\}$
\end{itemize}

These can be organized in the following table: 

\begin{center}
\begin{tabular}{||c||c|c|c|c||}
\hline\hline $(-)^{\bot\bot}$    & 0                    & 1                & 2            & $m>2$             \\
\hline\hline $0$      & $\{00\}$         & $\{01\}$  & $\{02\}$ & $0mR$ \\
\hline 1    &   $\{10\}$  &   $\{00, 11\}$      &$\{01, 12\}$ &  $0(m-1)R$     \\
\hline $n>1$    &   $n0R$  &    $0(n-1)R$  &  $0(n-2)R$    &    $\overline{mn}R$      \\
\hline\hline   
\end{tabular}
\end{center}

There are too many possible values of $(-)^\bot$ for us to consider explicitly enumerating $\R$ and showing the tables for $\otimes$ and $\vee$. We'll just note that $mnR \otimes xyR = (m+x)(n+y)R$.

Let's consider the implication space semantics for $\cX$, and we can check that $n\vdash m \iff \bbrack{n}\vDash \bbrack{m}$. The semantic value of $\bbrack{\bullet}$ is $\langle 10^{\bot\bot},01^{\bot\bot}\rangle = \langle \{10\},\{01\}\rangle$.

\begin{itemize}
  \item $\vdash \bullet$ in $\cX$, so we need to check $\vDash \bbrack{\bullet}$. To see that this obtains, note $\{01\}\subseteq \bot_\B$.
  \item $\bullet\vdash \bullet,\bullet$ in $\cX$, so we check $\bbrack{\bullet}\vDash\bbrack{\bullet},\bbrack{\bullet}$. To show this, we observe $(\{10\} \otimes \{01\}\otimes \{01\})^{\bot\bot}=\{12\}^{\bot\bot}=\{01,12\}\subseteq \bot_\B$.
  \item $\nvdash \bullet,\bullet$ in $\cX$, which matches  $\nvDash \bbrack{\bullet},\bbrack{\bullet}$ because $\{02\} \nsubseteq \bot_\B$.
  \item $\bullet,\bullet \nvdash \bullet$ in $\cX$. We see $\bbrack{\bullet},\bbrack{\bullet}\nvDash \bbrack{\bullet}$ because ${(\{10\}\otimes\{10\}\otimes\{01\})^{\bot\bot}=\{21\}^{\bot\bot}=01R\nsubseteq \bot_\B}$.
\end{itemize}

Now we consider computing semantic values for MALL formulas built from our single atomic proposition $\bullet$. Linear negation is swapping premisory and conclusory roles, so $\bbrack{\bullet^\bot}=\langle\{01\},\{10\}\rangle$. Then $\bbrack{\bullet \multimap \bullet}=\bbrack{\neg \bullet \parr \bullet}$ is equal to $\langle \{01\}\parr \{10\},\ (\{10\}\otimes \{01\})^{\bot\bot} \rangle=\langle\varnothing,\ \{00,11\}\rangle$. We compute the premisory role below:

\vspace{-3mm}
\begin{align*}
  \{01\}\parr \{10\}\\
  (\{01\}^\bot \otimes \{10\}^\bot)^\bot \\ 
  ((\{00,11\}\cup 10R )\otimes (\{02\}\cup 01R))^\bot \\
  (\{02,13\}\cup... \cup 11R)^\bot \\
  \{02,13\}^\bot\cap ... \cap \overline{11}R  \\
  02^\bot \cap 13^\bot \cap ... \cap R  \\
  (\{10\}\cup 20R) \cap 20R \cap ... \cap R\\
  20R \cap ... \cap R\\
  \varnothing
\end{align*}

To check whether linear modus ponens is valid, we test $\bbrack{\bullet},\bbrack{\bullet\multimap\bullet}\vDash\bbrack{\bullet}$, which holds because $\{10\}\otimes \varnothing \otimes \{01\} = \varnothing\subseteq \bot_\B$. 
This also could have been determined by \cref{prop:supralin}, since $\vDash$ is supralinear. However, note $\vDash$ is not transitive, as we have $\vDash \bbrack{\bullet}$ and $\bbrack{\bullet}\vDash \bbrack{\bullet},\bbrack{\bullet}$, yet we also have $\nvDash \bbrack{\bullet},\bbrack{\bullet}$.

% We have $\bbrack{\bullet \otimes \bullet}=\langle {10^{\bot\bot}\otimes 10^{\bot\bot}},\ 01^{\bot\bot}\parr 01^{\bot\bot}  \rangle = \langle \{20\},\  \varnothing\rangle$. The second component (conclusory role of $\bullet \otimes \bullet$) of the semantic value is computed below:

% \begin{align*}
%   \{01\}\parr \{01\}\\
%   (\{01\}^\bot \otimes \{01\}^\bot)^\bot \\ 
%   ((\{00,11\}\cup 10R )\otimes (\{00,11\}\cup 10R))^\bot \\
%  ( \{00,11,22\}\cup  (\{00,11\}\otimes 10R ) \cup 20R)^\bot \\
%   (\{00,11,22\}\cup  10R \cup 20R)^\bot \\
%   \{00,11,22\}^\bot\cap  10R^\bot \cap 20R^\bot \\
%   \{00,11,22\}^\bot\cap  01R \cap 02R \\
%   \varnothing
% \end{align*}

%The semantic value of $\bbrack{\bullet \parr \bullet}$ is $\langle 10^{\bot\bot} \parr 10^{\bot\bot},\ 01^{\bot\bot}\otimes 01^{\bot\bot} \rangle = \langle \varnothing,\ \{02\}\rangle$. Then we can show that $\bbrack{\bullet \otimes \bullet}\vDash \bbrack{\bullet \parr \bullet}$ by computing $\{20\}\otimes \{02\} = \{22\}\subseteq \bot_\B$.

\section{Lemmas}
\label{sec:lemmas}

% \subsection*{\cref{lemma:botclosed}}
% \begin{proof}
%   Following \cite{nlab:phase_semantics}: $(-)^\bot$ is a antitone Galois connection on $\cP[X]$. 
%   Therefore the fix points of $(-)^\bot$ are precisely the subsets of $X$ which are equal to $(-)^\bot$ applied to some other subset. 
%   $\bot = \{0\}^\bot$, hence $\bot=\bot^{\bot\bot}$. 
% \end{proof}

\subsection*{\cref{lemma:pscomma}}

\begin{proof}
  We'll construct a bijection $f$ with inverse $f^{-1}$. 
  Let $f$ send $\cX:=(X,+,0,\bot \subseteq X)$ (with implicit $\leq_\cX$ order) to $(\cX,{\rm Gir}(\cX),q_\cX)$. 
  As shown in \cref{def:natgq}, $q_\cX\colon \cX^\downarrow\twoheadrightarrow {\rm Gir}(\cX)$ is surjective.  We also observe the principal lower sets of $\cX^\downarrow$ are closed ($x^\downarrow = x^{\downarrow\bot\bot}$), which entails that $q_\cX$ is an order embedding for the principal lower sets: $q_\cX(a^\downarrow)\leq_{{\rm Gir}(\cX)} q_\cX(b^\downarrow)\iff a^{\bot\bot} \subseteq b^{\bot\bot} \iff b^\bot \subseteq a^\bot \iff a\leq_\cX b$. 
  Recall that hom set bijection of $F^\vee\dashv U^\vee$ sends the quantale morphism $q_\cX$ to the preordered monoid morphism $\tilde q_\cX$ where $\tilde q_\cX(x)=q_\cX(x^\downarrow)$. Therefore $\tilde q_\cX$ is an order embedding. Overall, this means $f$ does send $\cX$ to a valid $\cat{PS}$ object.

  Then $f^{-1}$ takes the data $(\cP,\cQ,\phi\colon F^\vee(\cP)\twoheadrightarrow\cQ)$ and returns $\cP$ equipped with the lower set $\phi_*(\bot) \in F^\vee(\cP)$, which is a lower set computed by applying the right adjoint of $\phi$ to $\bot$. 
  This means $\phi_*(\bot) = \bigcup(\{A \in F^\vee(\cP)\ |\ \phi(A) \leq \bot\})$, i.e. $p \in \bot' \iff \phi(p^\downarrow)\subseteq \bot$.
  
  First we must show that $f\cdot f^{-1}=\id$. 
  Start with a phase space $\cX:=(X,+,0,\bot)$. 
  We return back to $\cX$ iff $q_{\cX,*}(\bot) = \bot$. 
  Because $x \in \bot \iff x^{\bot\bot} \subseteq \bot \iff q_{\cX}(x^{\bot\bot})\subseteq \bot \iff x \in \bot'$, we have $\bot = \bot'$.

  Last we must show that $f^{-1}\cdot f=\id$. 
  Start with a monoidal preorder $\cP:=(P,+,0,\leq_\cP)$, a Girard quantale $\cQ:=(Q,\otimes,\leq_\cQ,\vee,\bot)$, and a quantale morphism $\phi\colon F^\vee(\cP)\twoheadrightarrow \cQ$ such that $\tilde \phi$ is an order embedding.
  We must recover all this data just from applying $f$ to $\cX:=(P,+,0, \bot')$ where $\bot':=\phi_*(\bot)$. We first want to know how to compute when elements are in $\bot'$:
  
  \begin{align*}
   a + x \in \bot' &&\text{Defn of $(-)^\bot$ in phase space}\\
  \phi((a + x)^\downarrow)\leq_Q \bot  &&\text{Defn of }\bot'\\
  \phi(a^\downarrow \otimes_{F^\vee\cP} x^\downarrow)\leq_Q \bot  &&\text{Day is monoidal }\\\
  \phi(x^\downarrow) \otimes_\cQ \phi(a^\downarrow)\leq_Q \bot  &&\phi\text{ is monoidal }\\
  \tilde \phi(x) \otimes_\cQ \tilde \phi(a) \leq_Q \bot &&\text{Defn of }\tilde \phi\\
  \end{align*}

  Call this result {\bf L1}. 
  Now we recover the order, $\leq_\cP$.

  \begin{align*}
    a &\leq_\cP b \\ 
    \tilde \phi(a) &\leq_\cQ \tilde \phi(b) &&\tilde \phi\text{ is an embedding}\\
    \tilde \phi(a)\otimes_\cQ \bigvee(\{c \in Q\ |\ c \otimes \tilde \phi(b)\leq_\cQ \bot\})  &\leq_\cQ \bot  &&\text{Defn of }(-)^\bot\\
     \bigvee(\{\tilde \phi(a)\otimes_\cQ c\ |\ c \otimes_\cQ \tilde \phi(b)\leq_\cQ \bot\})  &\leq_\cQ \bot  &&\text{Distributivity of }\otimes,\ \vee\\
     \forall c \in Q\colon c \otimes_\cQ \tilde \phi(b)\leq_\cQ \bot \implies \tilde \phi(a)\otimes_\cQ c   &\leq_\cQ \bot  &&\text{Property of } \vee\\
    \forall c \in P\colon \tilde \phi(c)\otimes_\cQ \tilde \phi(b) \leq_\cQ  \bot \implies \tilde \phi(c)\otimes_\cQ \tilde \phi(a) &\leq_\cQ  \bot && \text{$c= \bigvee_i \tilde \phi(c_i)$ for $c_i\in P$ b/c $\phi$ surj.}\\
    \forall c \in P\colon c+b \in \bot' \implies c+a &\in \bot' && \text{{\bf L1}}\\
    a &\leq_\cX b &&\text{Defn of }\leq_\cX &&\\
  \end{align*}

  We need to show that ${\rm Gir}(\cX) \cong \cQ$. We now show $x \in A^\bot$ (in $\cX$) iff $x \in \phi_*(\phi(A)^\bot)$.

  \begin{align*}
    \forall a\colon a + x \in \bot' &&\text{Defn of $(-)^\bot$ in phase space}\\
    \forall a\colon \phi(x^\downarrow) \otimes \phi(a^\downarrow)\leq_Q \bot  &&\text{{\bf L1} }\\
    \forall a\colon \phi(x^\downarrow) \leq_Q \phi(a)^\bot  &&\bot\text{ is dualizing}\\
    \phi(x^\downarrow)\leq_\cQ \phi(A)^\bot  &&\phi(A)^\bot\text{ as a meet}\\
    x^\downarrow \leq_{F^\vee\cP} \phi_*(\phi(A)^\bot) &&\phi\dashv \phi_*\colon \phi(a)\leq_\cQ b \iff a \leq_{F^\vee \cP} \phi_*(b)\\ 
    x^\downarrow \subseteq \phi_*(\phi(A)^\bot) &&\text{Defn of }\leq_{F^\vee\cP}\\ 
    x \in \phi_*(\phi(A)^\bot) &&\text{Lower set embedding}\\ 
  \end{align*}

  Call this {\bf L2}. Now we can compute $A^{\bot\bot}$ after noting that the image of $\phi$ for a Galois connection is the set of fix points of $\phi_*\cdot \phi$, and the surjectivity of $\phi$ means that for every element $q \in Q$ we have $\phi(\phi_*(q)) = q$.

  \begin{align*}
    A^{\bot\bot} \\ 
    \phi_*(\phi(\phi_*(\phi(A)^\bot))^\bot) && \text{\bf L2}\\
    \phi_*(\phi(A)^{\bot\bot}) && \phi_*\cdot \phi=\id_Q\\
    \phi_*(\phi(A)) && (-)^\bot \text{ is dualizing}\\
  \end{align*}
  
  Letting ${\rm Gir}(\cX)$ be the set of $(-)^{\bot\bot}$ closed lower sets of $\cX$, we have established its elements are the fixed points of $\phi\cdot \phi_*$, which is the closure operator associated with the Galois connection. Its fixed points are precisely the elements in the image of $\phi_*$. Therefore restricting the domain of $\phi$ and the codomain of $\phi_*$ makes a join and $\otimes$-preserving bijection between ${\rm Gir}(\cX)$ and $\cQ$.

\end{proof}

\subsection*{\cref{lemma:continuityequiv}}

\begin{proof}
  We actually show the following four conditions are equivalent:

    \begin{enumerate}
    \item $\forall A \subseteq X\colon f(A)^{\bot_\cY\bot_\cY} = f(A^{\bot_\cX\bot_\cX})^{\bot_\cY\bot_\cY}$
    \item $\forall A\subseteq X\colon f(A^{\bot_\cX\bot_\cX})\subseteq f(A)^{\bot_\cY\bot_\cY}$ 
    \item $\forall y \in Y\colon f^{-1}(y^{\bot_\cY})^{\bot_\cX\bot_\cX}\subseteq f^{-1}(y^{\bot_\cY})$
    \item $\forall A,B\subseteq X\colon A^{\bot_\cX}\subseteq B^{\bot_\cX} \implies f(A)^{\bot_\cY} \subseteq f(B)^{\bot_\cY}$
  \end{enumerate}

  $\mathbf{1 \implies 2}$: Because $(-)^{\bot_\cX\bot_\cX}$ is increasing, we have $A \subseteq A^{\bot_\cX\bot_\cX}$ and therefore $f(A) \subseteq f(A^{\bot_\cX\bot_\cX})$. Because $(-)^{\bot_\cY\bot_\cY}$ is monotone, we have $f(A)^{\bot_\cY\bot_\cY}\subseteq f(A^{\bot_\cX\bot_\cX})^{\bot_\cY\bot_\cY}$, and applying {\bf 1} to the right hand side yields {\bf 2}.

  $\mathbf{2 \implies 1}$: Apply $(-)^{\bot_\cY\bot_\cY}$ to both sides. Because it is idempotent, that yields that $f(A)^{\bot_\cY\bot_\cY}$ is a superset of $f(A^{\bot_\cX\bot_\cX})^{\bot_\cY\bot_\cY}$. That it is a subset follows from $(-)^{\bot_\cX\bot_\cX}$ being increasing.

  $\mathbf{2 \implies 3}$: Let $A := f^{-1}(y^{\bot_\cY})$. This means $f(A)\subseteq y^{\bot_\cY}$. Applying $(-)^{\bot_\cY\bot_\cY}$ to both sides (and noting the RHS is already closed) we get that {\bf (2a.)} $f(A)^{\bot_\cY\bot_\cY}\subseteq y^{\bot_\cY}$. 
  Let $a' \in A^{\bot_\cX\bot_\cX}$: we need to show $a' \in A$ in order to prove that preimages of facts are facts. Then $f(a') \in f(A^{\bot_\cX\bot_\cX})$, therefore continuity tells us {\bf (2b.)} $f(a')\in f(A)^{\bot_\cY\bot_\cY}$. Composing {\bf (2b)} and {\bf (2a)} gives us $f(a') \in y^{\bot_\cY}$, and applying $f^{-1}$ shows that $a' \in f^{-1}(y^{\bot_\cY})=A$.

  % NOT NEEDED
  % $\mathbf{3 \implies 2}$: let $A\subseteq X$ with $a' \in A^{\bot_\cX\bot_\cX}$, so our goal is to prove $f(a') \in f(A)^{\bot_\cY\bot_\cY}$. Let $y$ be a generic element in $f(A)^{\bot_\cY}$. Then we can rewrite our goal to showing that $f(a') + y \in \bot_Y$. Because $y \in f(A)^{\bot_\cY}$, we know that $\forall a \in A\colon y + f(a) \in \bot_Y$ which means $f(a) \in y^{\bot_\cY}$, i.e. $A \subseteq f^{-1}(y^{\bot_\cY})$. We can apply $(-)^{\bot\bot}$ to both sides and use our hypothesis that preimages of basic facts are facts to get $A^{\bot_\cX\bot_\cX} \subseteq f^{-1}(y^{\bot_\cY})$. Because $a' \in A^{\bot_\cX\bot_\cX}$, we get that $a' \in f^{-1}(y^{\bot_\cY})$. Applying $f$ to both sides gives us $f(a') + y \in \bot_Y$, which was what we needed to prove.
  % Not needed
  % $\mathbf{2 \implies 4}$: For any $B \subseteq A^{\bot\bot}$ (which is tantamount to $A^\bot\subseteq B^\bot$), we have $f(B)\subseteq f(A^{\bot\bot})\subseteq f(A)^{\bot\bot}$. Applying $(-)^\bot$ to both sides yields the consequent $f(A)^\bot\subseteq f(B)^\bot$.
  $\mathbf{3 \implies 4}$: We pick an arbitrary $y \in f(A)^\bot$ and show that $y\in f(B)^\bot$. First, we have {\bf (3a.)} $y \in f(A)^\bot \iff \forall a\colon f(a)+y \in \bot_\cY \iff A \subseteq f^{-1}(y^{\bot_\cY})$. From the assumption  $A^\bot\subseteq B^\bot$, we apply $(-)^{\bot_\cY}$ to both sides to obtain $B\subseteq B^{\bot\bot}\subseteq A^{\bot\bot}$. Therefore $B$ is contained in every closed set containing $A$. By~{\bf 3}, $f^{-1}(y^{\bot_\cY})$ is a closed set, so $B \subseteq f^{-1}(y^{\bot_\cY})$ and by {\bf 3a} $y \in f(B)^\bot$.

  $\mathbf{4 \implies 2}$: by letting $B=A^{\bot\bot}$ (the antecedent becomes true because $x^{\bot\bot\bot}=x^\bot$, and the consequent becomes $f(A)^\bot\subseteq f(A^{\bot\bot})^\bot$, which is tantamount to the statement we are trying to prove because $(-)^\bot$ is order reversing).
  
\end{proof}

\subsection*{\cref{lemma:pullbackprojadjoint}}

{\bf Notation:} Given a functor $G\colon \cat{B\rightarrow C}$, the cartesian lift of a morphism $f\colon c_1\rightarrow c_2$ is choice of a morphism $\overline{f}\in G^{-1}(f)$, with $\overline{f}\colon \overline{c_1}\rightarrow \overline{c_2}$. This satisfies the property that, for every $g\colon \overline{c_3}\rightarrow \overline{c_2}$ in $\cat{B}$ and $w\colon c_3\rightarrow c_1$ such that $w\cdot f = G(g)$, there exists a unique $\overline{w}\in G^{-1}(w)$ such that $\overline{w}\cdot \overline{f} = g$. We refer to this unique morphism induced by $g$ and $w$ as $!(g, w)$.

\[\begin{tikzcd}[cramped]
	{\overline{c_3}} && {c_3} \\
	{\overline{c_1}} & {\overline{c_2}} & {c_1} & {c_2}
	\arrow["{!(w,g)}"', dashed, from=1-1, to=2-1]
	\arrow["{\forall g}", from=1-1, to=2-2]
	\arrow["{\forall w}"', from=1-3, to=2-3]
	\arrow["{G(g)}", from=1-3, to=2-4]
	\arrow["{\overline{f}}"', from=2-1, to=2-2]
	\arrow["f"', from=2-3, to=2-4]
\end{tikzcd}\]
\begin{proof} 
  To show that $R(b) = (UG(b), \dom(\overline{\varepsilon_{G(b)}}))$ is an object in $\cat{A \times_C B}$ we need that $F$ applied to $UG(b)$ is equal to $G$ applied to $\dom(\overline{\varepsilon_{G(b)}})$. This follows because $G(\dom(\overline{\varepsilon_{G(b)}})) = \dom(\varepsilon_{G(b)}) = FUG(b)$. For morphisms, $R$ sends $f\colon b_1 \rightarrow b_2$ to a morphism $(UG(b_1), \dom(\overline{\varepsilon_{G(b_1)}}))\rightarrow (UG(b_2), \dom(\overline{\varepsilon_{G(b_2)}}))$ given by $(UG(f), !(\overline{\varepsilon_{G(b_1)}}\cdot f, FUG(f)))$. 

  \begin{figure}[h!]
\[\begin{tikzcd}[cramped]
	{\overline{FUG(b_1)}} &&& {\overline{FUG(b_2)}} \\
	{(\mathsf{B})} & {b_1} & {b_2} \\
	{(\mathsf{C})} & {G(b_1)} & {G(b_2)} \\
	{FUG(b_1)} &&& {FUG(b_2)}
	\arrow["{!(\overline{\varepsilon_{G(b_1)}}\cdot f, FUG(f))}", dashed, from=1-1, to=1-4]
	\arrow["{\overline{\varepsilon_{G(b_1)}}}"', from=1-1, to=2-2]
	\arrow["{\overline{\varepsilon_{G(b_2)}}}", from=1-4, to=2-3]
	\arrow["f", from=2-2, to=2-3]
	\arrow["{G(f)}"', from=3-2, to=3-3]
	\arrow["{\varepsilon_{G(b_1)}}", from=4-1, to=3-2]
	\arrow["{FUG(f)}"', from=4-1, to=4-4]
	\arrow["{\varepsilon_{G(b_2)}}"', from=4-4, to=3-3]
\end{tikzcd}\]

\caption{The $\cat{B}$ component of $R(f)$ for a morphism $f \in \cat{B}$ via the universal property of the cartesian lift of $G(f)$. These morphisms, drawn with a dashed arrow, are uniquely characterized by having the property of being postcomposed with $\overline{\varepsilon_{G(\cod(f))}}$ to yield a morphism equal to  $\overline{\varepsilon_{G(\dom(f))}}\cdot f$ {\it and} being equal to $FUG(f)$ in $\cat{C}$ after applying $G$. Note the bottom square commutes (necessary to apply the universal property of $!$) by naturality of $\varepsilon\colon FU\rightarrow 1_\cat{C}$.}
\label{fig:rmorphisms}
  \end{figure}

Now we've defined a functor $R$. To show it is right adjoint to $\pi_B$, we need to define the unit and counit transformations. The counit is a natural transformation $\varepsilon'\colon \pi_B R \to 1_\cat{B}$. We need a family of morphisms $\varepsilon'_b\colon \dom(\overline{\varepsilon_{G(b)}})\to b$. These are the morphisms $\overline{\varepsilon_{G(b)}}$, which are natural with the relevant commutative square being the upper square in \cref{fig:rmorphisms}. The unit is a natural transformation $\eta'\colon 1_\cat{A \times_C B}\to  R \pi_B$ where $\eta'_{(a,b)}\colon (a,b)\to (UG(b),\dom(\overline{\varepsilon_{G(b)}}))$. The $\cat{A}$ component of $\eta'_{(a,b)}$  is $\eta_a\colon a \to UF(a)$, which is equal to $a \to UG(b)$. The $\cat{B}$  component of $\eta'_{(a,b)}$ is $!({\rm id_b},F(\eta_a))$.

   \begin{figure}[h!]
\[\begin{tikzcd}[cramped]
	&& b & {(\mathsf{B})} \\
	{(\mathsf{A})} && {\overline{FUG(b)}} & b \\
	a && {F(a)=G(b)} & {(\mathsf{C})} \\
	{UF(a)} && {FUF(a)=FUG(b)} & {G(b)}
	\arrow["{!({\rm id_b},F(\eta_a))}"', dashed, from=1-3, to=2-3]
	\arrow[equals, from=1-3, to=2-4]
	\arrow["{\overline{\varepsilon_{G(b)}}}"', from=2-3, to=2-4]
	\arrow["{\eta_a}", from=3-1, to=4-1]
	\arrow["{F(\eta_a)}"', from=3-3, to=4-3]
	\arrow[equals, from=3-3, to=4-4]
	\arrow["{{\varepsilon_{G(b)}}}"', from=4-3, to=4-4]
\end{tikzcd}\]
\caption{The $\cat{B}$ component of $\eta'$. These are morphisms characterized by having the property of composing with $\overline{\varepsilon_{G(b)}}$ to yield $\id_b$ and living over $F(\eta_a)$.}
\label{fig:reta}
\end{figure}

To show $\eta'$ is natural, let $(f,g)$ be an arbitrary $\cat{A\times_C B}$ morphism $(a,b)\rightarrow (a',b')$. Then $R\pi_B(f,g)$ is a morphism $(UG(b), \dom(\overline{\varepsilon_{G(b)}}))\rightarrow (UG(b'), \dom(\overline{\varepsilon_{G(b')}}))$ and is equal to $(UG(g), !(\overline{\varepsilon_{G(b)}}\cdot f, FUG(g)))$. We show that the two possible compositions are equal by considering the $\cat{A}$ component and $\cat{B}$ components separately. 

For the $\cat{A}$ component, we get $f \cdot \eta_{a'} = \eta_a\cdot UG(g)$ which are equal due to naturality of $\eta\colon 1_\cat{A} \rightarrow UF$ and noting $G(g)=F(f)$. For the second component, we have $g \cdot !({\rm id_{b'}},F(\eta_{a'})) \overset{?}{=} !({\rm id_b},F(\eta_a))\cdot !(\overline{\varepsilon_{G(b)}}\cdot f, FUG(g))$. One way to show these two $\cat{B}$ morphisms are equal are that they live over the same morphism in $\cat{C}$ and that they are equal after composing with a cartesian morphism. First we show that they are equal as morphisms in $\cat{C}$. Applying $G$ to both sides yields $G(g)\cdot F(\eta_a')$ (rewritten to $F(f\cdot \eta_a'))$ and $F(\eta_a\cdot UG(g)) = F(\eta_a \cdot UF(f))$ - these are also equal due to naturality of $\eta$.

The last step of showing $\eta'$ is natural is showing the two $\cat{B}$ morphisms are the same after postcomposing with a cartesian morphism (say: $\overline{\varepsilon_{b'}}$). As they were defined in the figure above, postcomposition with the $\cat{B}$ component of with this morphism is equal to $\id_b'$. So $g \cdot !({\rm id_{b'}},F(\eta_{a'}))\cdot \overline{\varepsilon_{b'}} = g \cdot \id_b' = g$ and 

\begin{align*}
  !({\rm id_b},F(\eta_a))\cdot !(\overline{\varepsilon_{G(b)}}\cdot f, FUG(g))\cdot \overline{\varepsilon_{b'}} &&\text{}\\
  !({\rm id_b},F(\eta_a))\cdot \overline{\varepsilon_{G(b)}}\cdot g &&\text{See \cref{fig:rmorphisms}.}\\
  id_b\cdot g &&\text{See \cref{fig:reta}.}\\
  g &&\text{}\\
\end{align*}

Lastly, after having established natural transformations $\eta'$ and $\varepsilon'$, we verify the triangle equations. First we have $\pi_B(\eta'_{(a,b)}) \cdot \varepsilon'_{\pi_{B}(a,b)}$ to be $\id_b$. This is $!(\id_b,F(\eta_a))\cdot \overline{\varepsilon_{G(b)}}$ which is defined in the top commutative triangle in the figure above to compose to the identity $\id_b$. 

The second triangle identity requires showing $\eta'_{R(b)} \cdot R(\varepsilon'_b)$ is $\id_{R(b)}$. We'll treat the $\cat{A}$ and $\cat{B}$ components independently. $\pi_A(\eta'_{R(b)}) = \eta_{\pi_A(R(b))} = \eta_{UG(b)}$. And $\pi_A(R(\varepsilon'_b)) = UG(\varepsilon'_b) = UG(\overline{\varepsilon_{G(b)}}) = U(\varepsilon_{G(b)})$. That this composite $\cat{A}$ component, $\eta_{U(x)}\cdot  U(\varepsilon_{x})$ (letting $x=G(b)$), is the identity follows from one of the triangle identities for $F\dashv U$. For the $\cat{B}$ components, $\pi_B(\eta'_{R(b)} \cdot R(\varepsilon'_b))$, we first consider what applying $U$ yields. First we have $U(\pi_B(\eta'_{R(b)})) = U(!(...,F(\eta_{\pi_A(R(b))}))) = F(\eta_{\pi_A(R(b))}) = F(\eta_{UG(b)})$, and we also have $U(\pi_B(R(\varepsilon'_b))) = FUG(\varepsilon'_b) =  FUG(\overline{\varepsilon_{G(b)}}) = FU(\varepsilon_{G(b)})$. Composing these, we get $F(\eta_{UG(b)})\cdot FU(\varepsilon_{F(a)})$ and, letting $x = G(b)=F(a)$ we have $F(\eta_{U(x)} \cdot U(\eta_x))$ which is an identity morphism by a triangle identity of $F\dashv U$. So applying $U$ yields the same result as applying $U$ to $\id_{\dom(\overline{\varepsilon_{G(b)}})}$, namely $\id_{FUG(b)}$. We lastly need to check (in order to verify that this composite $\cat{B}$ component is equal to $\id_{\dom(\overline{\varepsilon_{G(b)}})}$) that it has the same result when postcomposed with a cartesian morphism (say: $\overline{\varepsilon_{G(b)}}$). 

\begin{align*}
  \pi_B(\eta'_{R(b)} \cdot R(\varepsilon'_b))\cdot \overline{\varepsilon_{G(b)}} &&\text{} \\
  \pi_B(\eta'_{R(b)}) \cdot \pi_B(R(\varepsilon'_b))\cdot \overline{\varepsilon_{\cod(\varepsilon'_b)}} &&\cod(\varepsilon'_b) = b \\
  \pi_B(\eta'_{R(b)}) \cdot \overline{\varepsilon_{G(\dom(\varepsilon'_b))}} \cdot \varepsilon'_b &&\text{Property of $R(f)$ (\cref{fig:rmorphisms})} \\
  \pi_B(\eta'_{R(b)}) \cdot \overline{\varepsilon_{G \pi_B R(b)}} \cdot \varepsilon'_b && \pi_B R(b) = \dom(\overline{\varepsilon_{G(b)}}) =\dom(\varepsilon'_b) \\ 
  \id_{\dom(\overline{\varepsilon_{G(b)}})} \cdot \varepsilon'_b && \text{Property of $\eta'$ (\cref{fig:reta})}\\ 
   \overline{\varepsilon_{G(b)}}  && \text{Defn of }\varepsilon'\\ 
\end{align*}

\end{proof}

\subsection*{\cref{lemma:ubotfibration}}
\begin{proof}
  We show the forgetful functor $U^\bot_+\colon \cat{PS \rightarrow CMon}$ has cartesian lifts for surjections in $\cat{CMon}$. 
  Given any phase space $\cY:=(Y,+_Y,0_Y,\bot_Y)$, monoid $\cX:=(X,+_X,0_X)$ and surjective monoid homomorphism $f\colon \cX\twoheadrightarrow U^\bot(\cY)$, there is a cartesian lift $\hat f\colon \hat \cX\rightarrow \cY$ with $\hat \cX := (X,+_X,0_X,f^{-1}(\bot_Y))$ (because a cartesian lift must also satisfy $U^\bot(\hat f)=f$ and $U^\bot$ is faithful, we can identify $\hat f$ with $f$). 
  First we need to verify this is a morphism in $\cat{PS}$: it is already a monoid homomorphism, and we need to check $\bot$ is reserved, which follows from $f(f^{-1}(\bot_Y))\subseteq \bot_Y$. We also need to check that it is continuous:

  \begin{align*}
    A^{\bot_\cX} &\subseteq B^{\bot_\cX} \\ 
    f^{-1}(f(A)^{\bot_\cY}) &\subseteq f^{-1}(f(B)^{\bot_\cY}) &&A^{\bot_\cX}=\{x \ |\ \forall a \in A\colon f(a)+f(x) \in \bot_\cY\} = f^{-1}(f(A)^{\bot_\cY})\\
    f(A)^{\bot_\cY}&\subseteq f(B)^{\bot_\cY} &&\text{For surjective }f,\ f^{-1}(A)\subseteq f^{-1}(B) \implies A\subseteq B\\
  \end{align*}
  
  To show $f\colon \hat \cX\rightarrow \cY$ is cartesian, let $\cZ:=(Z,+_Z,0_Z,\bot_Z)$ be a phase space with a morphism $g\colon \cZ \rightarrow \cY$ and $w\colon U^\bot(\cZ)\rightarrow \cX$ be a monoid morphism such that $w\cdot f = g$.\footnote{Again, we use faithfulness to identify the $\cat{PS}$ morphism $g$ with its underlying monoid morphism. 
  Faithfulness of $U^\bot$ is also why we only need to show existence of $w$, not uniqueness.} 
  Cartesianness of $f$ is that $w$ is a $\cat{PS}$ morphism $\cZ\rightarrow \cX$. We must show that $w$ preserves $\bot$. We have $f(w(\bot_Z))\subseteq \bot_Y$ because $g$ is a $\cat{PS}$ morphism. 
  Then we can apply $f^{-1}$ to both sides to obtain $w(\bot_Z)\subseteq f^{-1}(\bot_Y) = \bot_X$. 
  
  We lastly show $w$ is continuous. 
  We can express $w(A)^{\bot_\cX}$ as $\{x\ |\ \forall a \in A\colon g(a)+f(x)\in \bot_\cY\} = f^{-1}(g(A)^{\bot_\cY})$. 
  We show continuity for arbitrary $A,B \subseteq Z$ with $A^{\bot_\cZ}\subseteq B^{\bot_\cZ}$. 
  Note because $g$ is continuous we have $g(A)^{\bot_\cY}\subseteq g(B)^{\bot_\cY}$. 
  We can then apply $f^{-1}$ which preserves inclusions to get $f^{-1}(g(A)^{\bot_\cY}) \subseteq f^{-1}(g(B)^{\bot_\cY})$ which is precisely the desired $w(A)^{\bot_\cX} \subseteq w(B)^{\bot_\cX}$.
\end{proof}

\subsection*{\cref{lemma:commaadjoint}}
  \begin{proof}  
    
    $R$ sends $b\mapsto (UG(b),b,\varepsilon_{G(b)})$ and $f$ to $(UGf,f)$. 
    This is an adjoint with identities for counit morphisms $\varepsilon'$ and unit morphisms $\eta'(a,b,f)\mapsto (\eta_a \cdot Uf, \id_b)$. 
    We must for show these are morphisms $(a,b,f)\mapsto R(b)$ in $F \downarrow G$, i.e. the square below commutes:

  \begin{minipage}{0.25\linewidth}
\[\begin{tikzcd}[cramped]
	{F(a)} & {FUG(b)} \\
	{G(b)} & {G(b)}
	\arrow["{F(\eta_a\cdot Uf)}", shift left, from=1-1, to=1-2]
	\arrow["f"', from=1-1, to=2-1]
	\arrow["{\varepsilon_{G(b)}}", from=1-2, to=2-2]
	\arrow[equals, from=2-1, to=2-2]
\end{tikzcd}\]
  \end{minipage}
  \begin{minipage}{0.7\linewidth}
\begin{align*}
  F(\eta_a)\cdot FUf \cdot \varepsilon_{G(b)} && \text{}\\ 
  F\eta_a \cdot \varepsilon_{F(a)} \cdot f && \text{Naturality of $\varepsilon$ applied to $f\colon F(a)\rightarrow U(b)$}\\ 
  f  && \text{Triangle identity}\\ 
\end{align*}  
\end{minipage}

We also need to show these morphisms form a natural transformation. 
Let $(\phi,\psi)\colon (a,b,f)\rightarrow (a',b',f')$ be an arbitrary morphism in $F \downarrow G$. 
We need to verify $(\phi,\psi)\cdot \eta_{(a',b',f')} = \eta_{(a,b,f)}\cdot R(\psi)$. 
Unpacking these definitions, we have: $(\phi \cdot (\eta_{a'}\cdot Uf'), \psi\cdot id_{b'}) = ((\eta_{a}\cdot Uf) \cdot (UG\psi), \id_b \cdot \psi)$. 
Clearly the second component is equal, so we focus on the first:

\begin{minipage}{0.48\linewidth}
  \begin{align*}
  \phi \cdot \eta_{a'}\cdot Uf' \\
  \eta_a \cdot UF\phi \cdot Uf' &&\text{Naturality of }\eta\\
  \eta_a \cdot U(F\phi \cdot f') &&\text{Functoriality of }U\\
\end{align*}
\end{minipage}
\begin{minipage}{0.48\linewidth}
\begin{align*}
  (\eta_{a}\cdot Uf) \cdot (UG\psi) \\ 
  \eta_{a}\cdot U(f \cdot G\psi) && \text{Functoriality of }U\\ 
  \eta_{a}\cdot U(F\phi \cdot f' ) && (\phi,\psi)\text{ is a $F\downarrow G$ morphism}\\ 
\end{align*}
\end{minipage}

\begin{minipage}{0.4\linewidth}
Now we show a triangle identity:

\begin{align*}
  \pi_\cat{B}(\eta'_{(a,b,f)})&\cdot \varepsilon'_{\pi_\cat{B}(a,b,f)} &&\text{}\\ 
  \pi_\cat{B}(\eta'_{(a,b,f)})&\cdot \varepsilon'_{b} &&\text{Defn of }\pi_\cat{B}\\ 
  \pi_\cat{B}((\eta_a\cdot Uf,\id_b))&\cdot \id_b &&\text{Defn of }\eta'\\ 
  \id_b&\cdot \id_b &&\text{Defn of }\pi_\cat{B}\\ 
\end{align*}
\end{minipage}
\begin{minipage}{0.59\linewidth}
And the second triangle identity:
\begin{align*}
  \eta'_{R(b)} &\cdot R(\varepsilon'_b) &&\text{}\\ 
  \eta'_{(UG(b),b,\varepsilon_{G(b)})} &\cdot R(\id_b) &&\text{Defns of }\varepsilon',\ R\\
  (\eta_{UG(b)}\cdot U\varepsilon_{G(b)},\ \id_b) &\cdot R(\id_b) &&\text{Defn of }\eta'\\
  (\eta_{UG(b)}\cdot U\varepsilon_{G(b)},\ \id_b) &\cdot (\id_{UG(b)}, \id_b) &&\text{Defn of } R\\
  (\id_{UG(b)},\ \id_b) &\cdot (\id_{UG(b)}, \id_b) &&\text{Triangle identity}\\
\end{align*}
\end{minipage}

Therefore, $\pi_B\dashv R$. Because the counit morphisms are isomorphisms, $B \rightarrowtail (F\downarrow G)$ is a reflective subcategory.
\end{proof}

\subsection*{\cref{lemma:commaadjointrestrict}}

\begin{proof}
  By \cref{lemma:commaadjoint}, $R\colon \cat{GQ}\rightarrowtail (F^\vee\downarrow U^\bot)$, where $F^\vee \dashv U^\vee$ is the free join completion on a preordered monoid and $U^\bot$ is the forgetful functor $\cat{GQ\rightarrow Quant}$ discarding dualizing information from a quantale. 
  We need to show that $R$ restricts to the full subcategory $\iota\colon \cat{PS}\rightarrowtail  (F^\vee\downarrow U^\bot)$. 
  This is a matter of checking that for any Girard quantale $\cQ$, we have $R(\cQ)$ equal to $\iota(\cX)$ for some phase space $\cX$. 
  The general formula for $R$ is $(U^\vee U^\bot(\cQ), \cQ, \varepsilon_{U^\bot(\cQ)})$. 
  Because $\varepsilon$ is surjective, \cref{lemma:pscomma} shows how we can view this triple as just a particular preordered monoid $U^\vee U^\bot(\cQ)$ equipped with the lower set $\varepsilon_*(\bot)$. 
  We must now show that $a \leq_\cQ b$ iff $\forall c \in Q\colon c+b \in \varepsilon_*(\bot) \implies c+a \in \varepsilon_*(\bot)$.

  In the forward direction, by monotonicity of $\leq$ and $\otimes$ we have $a \leq b \implies \forall c\colon a + c \leq b+c$ which implies $b+c \leq \bot \implies a+c \leq \bot$. 
  In the reverse direction, we have $b+b^{\bot} \leq \bot$ which, if we apply the hypothesis with $c \mapsto b^\bot$, gives us $b^\bot +a \leq \bot$. 
  Because $(-)^\bot$ is dualizing, we then have $a \leq b$.
\end{proof}

\subsection*{\cref{lemma:invmonoid}}
\begin{proof}

  We show $F^\dagger$ is functorial: that $F f=f\times f$ preserves $+$ and $0$ follows from its componentwise definition, and that it is the same action in both components means it does commute with swap. So $F f$ is a valid $\cat{CMon^\dagger}$ morphism, and $F^\dagger$ preserves identities and composition in virtue of its componentwise definition.

  We check that $\eta^\dagger$ is a natural transformation $1_\cat{CMon}\Rightarrow U^\dagger F^\dagger$. Given $\cX:=(X,+,0)$ and $\hat \cX=(X^2,+^2,0^2)$, the mapping $x \mapsto (x,0)$ is a monoid homomorphism. We check naturality:

  $$F^\dagger f(\eta^\dagger_\cX(x)) =(f(x),0) =\eta^\dagger_\cY(f(x))$$

  We check that $\varepsilon^\dagger$ is a natural transformation $ F^\dagger U^\dagger \Rightarrow1_\cat{CMon^\dagger}$. Given $\hat \cY:=(Y^2,+^2,0^2,\sigma)$ and $\cY=(X,+,0,\dagger)$, the mapping $(y_1,y_2)\mapsto y_1+y_2^\dagger$ sends $(0,0) \mapsto 0+0 =0 $, respects addition as $\varepsilon^\dagger_\cY(y_1,y_2)+\varepsilon^\dagger_\cY(z_1,z_2)=y_1+y_2^\dagger+z_1+z_2^\dagger = y_1+z_1+(y_2+z_2)^\dagger = \varepsilon^\dagger_\cY((y_1,y_2)+(z_1,z_2))$. And it respects involutions: $\varepsilon^\dagger_\cY(y_1,y_2)^\dagger = (y_1+y_2^\dagger)^\dagger= y_1^\dagger+y_2 = \varepsilon^\dagger_\cY((y_2,y_1))=\varepsilon^\dagger_\cY((y_1,y_2)^\sigma)$. We check naturality: 

  $$ \varepsilon^\dagger_\cY(F^\dagger f(x_1,x_2)) =\varepsilon^\dagger_\cY(f(x_1),f(x_2)) = f(x_1)+f(x_2)^\dagger =f(x_1+x_2^\dagger)  = f(\varepsilon^\dagger_\cX(x_1,x_2))$$

  We check that $\eta_\dagger$ is a natural transformation $1_\cat{CMon^\dagger}\Rightarrow F^\dagger U^\dagger $. Given $\cX:=(X,+,0,\dagger)$ and $\hat \cX=(X^2,+^2,0^2)$, the mapping $x \mapsto (x,x^\dagger)$ preserves the unit as $0\mapsto (0,0^\dagger)=(0,0)$. Checking addition: $(x_1+x_2)$ is sent to $(x_1+x_2, (x_1+x_2)^\dagger)$ which does equal $(x_1,x_1^\dagger)+(x_2,x_2^\dagger)$. The involution is also preserved: $\eta_{\dagger,\cX}(x)^\sigma = (x,x^\dagger)^\sigma = (x^\dagger,x^{\dagger\dagger})=\eta_{\dagger,\cX}(x^\dagger)$. We check naturality:

  $$F^\dagger f(\eta_{\dagger,\cX}(x)) =(f(x),f(x^\dagger)) = (f(x),f(x)^\dagger) = \eta_{\dagger,\cY}(f(x))$$

  We check that $\varepsilon_\dagger$ is a natural transformation $ U^\dagger F^\dagger  \Rightarrow1_\cat{CMon}$. Given $\hat \cY:=(Y^2,+^2,0^2,\sigma)$ and $\cY=(X,+,0)$, the mapping $(y_1,y_2)\mapsto y_1$ is clearly a monoid homomorphism. We check naturality: 

  $$ \varepsilon_{\dagger,\cY}(F^\dagger f(x_1,x_2)) =\varepsilon_{\dagger,\cY}(f(x_1),f(x_2)) = f(x_1) = f(\varepsilon_{\dagger_\cX}(x_1,x_2))$$

  Now we verify the triangle identities for  $\eta^\dagger$ and $\varepsilon^\dagger$.

  \begin{minipage}{0.5\linewidth}
\begin{align*}
  \varepsilon^\dagger_{F^\dagger \cX}(F^\dagger(\eta^\dagger_\cX)(x_1,x_2))\\
  \varepsilon^\dagger_{F^\dagger \cX}((x_1,0),(x_2,0))\\
  (x_1,0)+(x_2,0)^\sigma \\
  (x_1,x_2)\\
\end{align*}
  \end{minipage}
  \begin{minipage}{0.5\linewidth}
    \begin{align*}
      U^\dagger(\varepsilon^\dagger_\cY)(\eta^\dagger_{U^\dagger \cY}(y))\\
      \varepsilon^\dagger_\cY(y,0)\\
      y+0 \\
      y
    \end{align*}
  \end{minipage}

  Lastly we verify the triangle identities for  $\eta_\dagger$ and $\varepsilon_\dagger$.

  \begin{minipage}{0.5\linewidth}
\begin{align*}
  \varepsilon_{\dagger,U^\dagger \cX}(U^\dagger(\eta_{\dagger,\cX})(x))\\
  \varepsilon_{\dagger,U^\dagger \cX}(x,x^\dagger) \\
  x\\
\end{align*}
  \end{minipage}
  \begin{minipage}{0.5\linewidth}
\begin{align*}
      F^\dagger(\varepsilon_{\dagger_\cY})(\eta_{\dagger, F^\dagger \cY}(y_1,y_2))\\
       F^\dagger(\varepsilon_{\dagger_\cY})((y_1,y_2),(y_2,y_1))\\
      (\varepsilon_{\dagger_\cY}(y_1,y_2),\varepsilon_{\dagger_\cY}(y_2,y_1))\\
       (y_1,y_2)
\end{align*}
  \end{minipage}
\end{proof}

\subsection*{\cref{lemma:forgetprefib}}
\begin{proof}
  Given any $\cY:=(Y,+,0,\leq) \in \cat{PreOrdCMon}$, the cartesian lift of a monoid morphism $f\colon \cX\rightarrow U^\leq(\cY)$ is given by the `same' map $f$ (because $U^\leq$ is faithful) from $\hat \cX$ into $\cY$, where $\hat \cX$ equips $\cX$ with the pullback order: $x_1 \leq_{\cX} x_2 := f(x_1)\leq_\cY f(x_2)$. First we need to establish that $\hat \cX$ is a valid $\cat{PreOrdCMon}$ object: this means checking that $\leq$ is a preorder and that it is compatible with $+_X$. Its reflexivity and transitivity derive from the reflexivity and transitivity of $\leq_\cY$. For compatibility:
  
  \begin{align*}
    a \leq_\cX b \wedge c \leq_\cX d\\
    f(a) \leq_\cY f(b) \wedge f(c) \leq_\cY f(d) &&\text{Defn of }\leq_\cX\\
    f(a)+f(c) \leq_\cY f(b)+f(d) &&\cY\text{ has compatible  }+,\ \leq\\
    f(a+c) \leq_\cY f(b+d) &&f\text{ is monoidal}\\
    a+c \leq_\cX b+d  &&\text{Defn of }\leq_\cX\\
  \end{align*}

  That $f$ is a valid $\cat{PreOrdCMon}$ morphism in addition to already being a $\cat{CMon}$ morphism just requires checking monotonicity, but it is monotone by the definition of $\leq_{\cX}$.
  
 Now that we have verified the proposed cartesian lift exists in $\cat{PreOrdCMon}$, we must test it has the required universal property by considering an arbitrary $\cat{PreOrdCMon}$ morphism $g\colon \cZ\rightarrow \cY$ and a $\cat{CMon}$ morphism $w\colon U^\bot(\cZ)\rightarrow \cX$ such that $w \cdot f = g$ (note we use faithfulness to elide the difference between morphisms in $\cat{CMon}$ and $\cat{PreOrdCMon}$). Cartesianness means that $w$ must moreover be a $\cat{PreOrdCMon}$ morphism. This requires that it additionally be monotone:

  \begin{align*}
    z_1 \leq_\cZ z_2 \\
    g(z_1) \leq_\cY g(z_2) &&g \text{ is monotone}\\ 
    f(w(z_1)) \leq_\cY f(w(z_2)) &&g=w\cdot f\\ 
    w(z_1)\leq_\cX w(z_2) &&\text{Defn of }\leq_\cX\\
  \end{align*}
\end{proof}

\subsection*{\cref{lemma:reflrestrict}}

\begin{proof}
  Consider a $\cat{GQ}$ morphism $f\colon \cQ_\cX\to \cQ_\cY$, a function $X\to Y$ between the underlying sets. We first check $U_\pm f$ is well-defined as a function. Let $R_\cX\subseteq X^2$ be the reflexive elements of $U^{\otimes\neg\oplus}(\cX)$, i.e. $\{(a,b) \in X^2\ |\ a\otimes b\leq \bot_\cX\}$. Then $f(a)\otimes f(b)\leq f(\bot_\cX) \leq \bot_\cY$ (using monotonicity and weak $\bot$ preservation of $f$ as a $\cat{GQ}$ morphism), so $U^{\otimes\neg\oplus} f(a,b) = (f(a),f(b))$ is in $R_\cY$. $U_\pm f$ preserves $\bot$ because it is a restriction of $U^{\otimes\neg\oplus} f$, which preserves $\bot$. We must check the continuity condition for all sets of positions $A,B\subseteq \N[R_\cX+R_\cX]$. We need to show, from assuming $(A^\bot \cap \N[R_\cX+R_\cX])\subseteq (B^\bot \cap \N[R_\cX+R_\cX])$, that we can prove $(f(A)^\bot\cap \N[R_\cY+R_\cY]) \subseteq (f(B)^\bot \cap \N[R_\cY+R_\cY])$. We just need to show $A^\bot\subseteq B^\bot$, since then, by continuity of $U^{\otimes\neg\oplus} f$, we have $f(A)^\bot\subseteq f(B)^\bot$, which can be restricted to the intersection with $\N[R_\cY+R_\cY]$ to prove this goal.

  Let $(\Gamma,\Delta)$ be an element $\N[X^2+X^2]$, though not necessarily in $\N[R_\cX+R_\cX]$. We can think of $\Gamma$ as a multiset of pairs (of $X$ elements) on the left and $\Delta$ as a multiset of pairs on the right. There exists a position $(\Gamma',\Delta')$ with the same $\bot$-behavior which is in $\N[R_\cX+R_\cX]$: we replace all left pairs $(\gamma_+,\gamma_-) \in X^2$ with $(\gamma_+,\gamma_+^\bot) \in R_\cX$ and all the right pairs $(\delta_+,\delta_-) \in X^2$ with $(\delta_-^\bot,\delta_-) \in R_\cX$. This has the same $\bot$ behavior because checking if $(\langle \texttt{a}_+, \texttt{a}_-\rangle,\langle \texttt{b}_+, \texttt{b}_-\rangle) \in \bot$ only depends on $\texttt{a}_+$ and $\texttt{b}_-$. This allows us to derive $A^\bot\subseteq B^\bot$:

  \begin{align*}
    (\Gamma,\Delta) &\in A^\bot &&\\
    (\Gamma',\Delta') &\in A^\bot \cap \N[R_\cX+R_\cX] && (\Gamma',\Delta')\text{ has same incompatibilities}\\
    (\Gamma',\Delta') &\in B^\bot \cap \N[R_\cX+R_\cX] &&(A^\bot \cap \N[R_\cX+R_\cX])\subseteq (B^\bot \cap \N[R_\cX+R_\cX])\text{ by hypothesis}\\
    (\Gamma,\Delta) &\in B^\bot && (\Gamma,\Delta)\text{ has same incompatibilities}\\
  \end{align*}
  
  Having shown that restricting the domain and codomain of morphisms $U^{\otimes\neg\oplus} f$ in $\cat{IF_\pm}$ are valid $\cat{IF^r_\pm}$ morphisms, functoriality follows from functoriality of $U^{\otimes\neg\oplus}$.  Next we show that $U_\pm$ is right adjoint to $F_\pm$. There is a natural bijection:
  
  $$\cat{GQ}(F_\pm \cX,\ \cQ) = \cat{GQ}(F^{\otimes\neg\oplus}(\iota^{\rm r}\cX),\ \cQ)\cong \cat{IF_\pm}(\iota^{\rm r}\cX,\ U^{\otimes\neg\oplus} \cQ)\cong \cat{IF^r_\pm}(\cX,\ U_\pm \cQ)$$

  The first equality comes from $F_\pm=\iota^{\rm r}\cdot F^{\otimes\neg\oplus}$, and the first bijection is from  $F^{\otimes\neg\oplus}\dashv U^{\otimes\neg\oplus}$. We establish the third natural bijection by first noting  there is a natural transformation $\alpha\colon \iota^{\rm r}U_\pm\Rightarrow U^{\otimes\neg\oplus}$ whose components are the inclusion of reflexive elements into a general implication frame. Showing this inclusion function is continuous is tantamount to showing for $A,B\subseteq \N[R_\cX+R_\cX]$ and assuming $(A^\bot \cap \N[R_\cX+R_\cX])\subseteq (B^\bot \cap \N[R_\cX+R_\cX])$, that  $A^\bot \subseteq B^\bot$. This follows from the same reasoning as above where every non-reflexive position has a corresponding reflexive position with the same $\bot$-behavior. 
  
  We can postcompose with these components to obtain a function $\cat{IF^r_\pm}(\cX,\ U_\pm\cQ)\rightarrowtail \cat{IF_\pm}(\iota^{\rm r}\cX, U^{\otimes\neg\oplus} \cQ)$. In the other direction, we corestrict the function to reflexive elements. Thus our purported bijection is $\alpha_\cQ \circ -$ and $(-)|_{R}$. These are inverse because corestriction and inclusion do not change how functions act on elements of the domain.
\end{proof}

\subsection*{\cref{lemma:rsrequiv}}
\begin{proof}
  An element of $\R$ is an element $R \in \cP[\N[X+X]]$ for which $R=\RSR(\RSR(R))$. An element $A\in \hat \cX$ is a lower set of the involutive phase space $F^{\otimes}F^\neg(\cX)$ such that, in the quantale of lower sets, $A^{\bot\bot}=A$. The elements of $F^{\otimes}F^\neg(\cX)$ are in bijection with $\N[X+X]$, so its lower sets are a subset of $\cP[\N[X+X]]$. 
  
  First we must check that every element of $\R$ is a lower set. Suppose $x \leq y$ and $y \in R$ for some $R \in \R$. Because $\R=\im(\RSR)$, there exists some $S \subseteq \N[X+X]$ such that $R=\RSR(S)$. Then we infer that $\forall s \in S\colon s+y \in \bot$. However, $x \leq y$ means that anything which sums with $y$ to be in $\bot$ must sum with $x$ to be in $\bot$, therefore $\forall s \in S\colon s+x \in \bot$, which implies $x \in R$. 
  
  So elements of $\R$ and $\hat \cX$ are both lower sets, and we just need to verify that the property of being a fix point of $\RSR^2$ is equivalent to being a fix point of $(-)^{\bot\bot}$. This is true because, as operations on lower sets, they are identical: $\RSR(X)=X^\bot=\{y\in \N[X+X]\ |\ \forall x \in X \colon x+y \in \bot \}$.
\end{proof}

\subsection*{\cref{lemma:twistrestrict}}

\begin{proof}

  Swapping elements is invariant for property of $\texttt{a}_+\otimes \texttt{a}_-\leq \bot$ because $\otimes$ is commutative.
  For $\texttt{A} (\otimes \times \parr) \texttt{B}$ to be closed we need:
\begin{align*}
  \langle \texttt{a}_+,\texttt{a}_-\rangle &(\otimes \times \parr) \langle \texttt{b}_+,\texttt{b}_-\rangle \in \hat \cX \\ 
  \langle \texttt{a}_+ \otimes \texttt{b}_+&,\ \texttt{a}_-\parr \texttt{b}_-\rangle \in \hat \cX  &&\text{Defn of }(\otimes \times \parr)\\ 
  \texttt{a}_+ \otimes \texttt{b}_+&\leq (\texttt{a}_-\parr \texttt{b}_-)^\bot &&\text{Elementhood of }\hat \cX\\ 
  \texttt{a}_+ \otimes \texttt{b}_+&\leq \texttt{a}_-^\bot\otimes \texttt{b}_-^\bot &&\text{Defn of }\parr\\ 
\end{align*}

This follows from $\otimes$ being monotonic and $\texttt{A}, \texttt{B} \in \hat \cX$. Now we check if $\texttt{A}(\vee \times \wedge)\texttt{B}$ is closed.

\begin{align*}
  \langle \texttt{a}_+,\texttt{a}_-\rangle &(\vee \times \wedge) \langle \texttt{b}_+,\texttt{b}_-\rangle \in \hat \cX \\ 
    \langle \texttt{a}_+ \vee \texttt{b}_+&,\ \texttt{a}_-\wedge \texttt{b}_-\rangle \in \hat \cX  &&\text{Defn of }(\vee \times \wedge)\\ 
    \texttt{a}_+ \vee \texttt{b}_+&\leq (\texttt{a}_-\wedge \texttt{b}_-)^\bot &&\text{Elementhood of }\hat \cX\\ 
    \texttt{a}_+ \vee \texttt{b}_+&\leq \texttt{a}_-^\bot\vee \texttt{b}_-^\bot &&\text{Property of }(-)^\bot,\vee\\ 
\end{align*}

This follows from $\vee$ being monotonic and $\texttt{A}, \texttt{B} \in \hat \cX$. 

\end{proof}

\subsection*{\cref{lemma:undercont}}

\begin{proof}
  We first show that $\iota^{\rm c}\cdot F_\pm$ corestricts to $\cat{GQ^{ji}}$. Every principal lower set is idempotent in virtue of the starting frame being idempotent, and every other element is a join of principal lower sets.
  
  $U_\pm^{\rm c}$ is a functor. We first check $U_\pm^{\rm c}$ sends $\cat{GQ^{ji}}$ morphisms $f\colon \cQ\to \cQ'$ to $\cat{IF^c_\pm}$ morphisms, which requires confirming that IC (idempotent and containment-satisfying) elements of $U^{\rm c}_\pm \cQ$ are sent to IC elements of $U^{\rm c}_\pm \cQ'$ by $U^{\rm c}_\pm f = f \times f$. Firstly, because $f$ is a monoid homomorphism, it must send idempotent elements to idempotent elements. For any join-idempotent Girard quantale $\cQ$, an element $\langle p,q\rangle$ of $U_\pm^{\rm c} \cQ$ satisfies containment if $p \otimes q \otimes r \leq \bot$ for all $r \in \cQ$, which is tantamount to $p\otimes q = 0$ (where $0$ is the lattice bottom). So we must check that $f(p)\otimes f(q) = 0_\cQ'$, which holds because $0$ (the empty join) is preserved by $f$. Once again, $\bot$ is preserved by $U_\pm^{\rm c} f$ because it is the restriction of $U_\pm f$, which preserves $\bot$. 

  Let $C\subseteq X^2$ be the set of IC elements in $U^{\rm c}_\pm \cQ$ for $\cQ:=(X,\otimes,\vee, \bot_\cQ)$. Much like the proof of \cref{lemma:reflrestrict}, we show continuity by using the continuity of $U_\pm f$. This requires us to prove, for any  $A,B\subseteq \N[C+C]$ (with an arbitrary element of $A$ being $\langle \Gamma,\Delta\rangle$ with $\Gamma,\Delta \in \N[C]$) and assuming that $(A^\bot \cap \N[C+C])\subseteq (B^\bot\cap \N[C+C])$, that $A^\bot\subseteq B^\bot$. For any multiset $\Xi \in \N[X^2]$ of pairs of elements in $\cQ$ in $A$, let $p_\Xi:=\bigotimes_{(\xi_+,\xi_-)\in \Xi}\xi_+$ and  $n_\Xi:=\bigotimes_{(\xi_+,\xi_-)\in \Xi}\xi_-$. 
  
  Now, suppose $(\Theta,\Omega) \in A^\bot$, i.e. $\forall \langle \Gamma,\Delta\rangle \in A\colon p_\Gamma \otimes p_\Theta \otimes n_\Delta \otimes n_\Omega \leq \bot_\cQ$. Because $\cQ$ is join-idempotent, we have $p_\Theta=\bigvee_{e \in I\cap p_\Theta^\downarrow} e$ and $n_\Omega=\bigvee_{d \in I\cap n_\Omega^\downarrow} d$. Therefore, our hypothesis is $\bigvee_{e \in I\cap p_\Theta^\downarrow}\bigvee_{d \in I\cap n_\Omega^\downarrow} e \otimes d \otimes p_\Gamma \otimes n_\Delta \leq \bot_\cQ$ and we need to prove, for an arbitrary $\langle \Gamma',\Delta'\rangle\in B$ that $\bigvee_{e \in I\cap p_{\Theta}^\downarrow}\bigvee_{d \in I\cap n_{\Omega}^\downarrow} e \otimes d \otimes p_{\Gamma'} \otimes n_{\Delta'} \leq \bot_\cQ$. We can show this latter inequality by showing it for an arbitrary choice of $e \in I\cap p_{\Theta}^\downarrow$ and $d \in I\cap n_{\Omega}^\downarrow$. 
  
  Consider $\mathtt{x} = \langle \{(e,0)\}, \{(0,d)\} \rangle$, which is in $ \N[C+C]$ because $e$,$d$, and $0$ are all idempotent elements of $\cQ$ and because $0$ is an absorbing element, so $e\otimes 0=0\otimes d=0$. 
  
  \begin{align*}
    e\otimes d &\leq p_\Theta \otimes n_\Omega && e \leq p_\Theta \wedge d\leq n_\Omega\\
    p_\Gamma \otimes e \otimes n_\Delta \otimes d &\leq p_\Gamma\otimes p_\Theta \otimes n_\Delta \otimes n_\Omega && - \otimes (p_\Gamma \otimes n_\Delta)  \text{ is monotone}\\ 
    p_\Gamma \otimes e \otimes n_\Delta \otimes d &\leq \bot_\cQ && (\Theta,\Omega) \in A^\bot\\
    \mathtt{x} &\in A^\bot \\
    \mathtt{x} &\in B^\bot && (A^\bot \cap \N[C+C])\subseteq (B^\bot\cap \N[C+C])\\
    \forall \langle \Gamma',\Delta'\rangle \colon e \otimes d \otimes p_{\Gamma'} \otimes n_{\Delta'} &\leq \bot_\cQ \\
  \end{align*}
  
  Therefore this inequality holds for the join of all such $e,d$, which was needed to show $(\Theta,\Omega) \in B^\bot$.
  
  The argument for extending the adjunction to $\cat{IF^c_\pm}$ follows \cref{lemma:reflrestrict}. There is a natural bijection $\cat{IF^r_\pm}(\iota^{\rm c}\cX,U_\pm \cQ) \cong \cat{IF^c_\pm}(\cX,U^{\rm c}_\pm \cQ)$ given by restriction to IC elements and the inclusion of IC elements.  
\end{proof}

\subsection*{\cref{lemma:idemsubquant}}
\begin{proof}
  
By construction, every element of $Q_\otimes$ is idempotent. We next need to show that joins are given by $\tilde \bigvee_{i\in I} x_i := \bigvee_{\varnothing \subset J \subseteq_{\rm fin} I} \bigotimes_{j\in J} x_j$. First we must show this is idempotent:

\begin{align*}
  (\bigvee_{\varnothing \subset J \subseteq_{\rm fin} I} \bigotimes_{j\in J} x_j) \otimes (\bigvee_{\varnothing \subset K \subseteq_{\rm fin} I} \bigotimes_{k\in K} x_k) \\
  \bigvee_{\varnothing \subset J,K \subseteq_{\rm fin} I} \bigotimes_{j\in J} x_j \otimes  \bigotimes_{k\in K} x_k &&\otimes \text{ distributes over joins}\\
  \bigvee_{\varnothing \subset J \subseteq_{\rm fin} I} \bigotimes_{j\in J} x_j &&\text{See below}\\
\end{align*}

The last step is sound because for any choice of $K$, multiplying the product $\bigotimes_j x_j$ simply produces $\bigotimes_{j'\in J'}x_{j'}$ for some different $J'$ in between $\varnothing$ and $I$. 

To show this is an upper bound of $x_i$ for $i \in I$, it is a join of all of the singleton sets $\{x_i\}$ and is therefore above them all in $Q$. To see it is the {\it least} upper bound when restricting to idempotent elements, let $c$ be an idempotent element above all $x_i$. Using the monotonicity of $\otimes$ and $\leq$ we have, for any nonempty $J \subseteq I$,  $\bigotimes_{j \in J} x_j \leq \bigotimes_{j \in J} c$, and the idempotency of $c$ means $\bigotimes_{j \in J} x_j \leq c$. Therefore, because all elements in the join that constitutes $\tilde \bigvee_{i\in I} x_i$ are below $c$ in $\cQ$, we have $\tilde \bigvee_{i\in I} x_i \leq c$. Lastly, $\otimes$ distributes over this join because it is a join in $Q$. Note for the $I=\{1,2\}$ case, this formula evaluates to $x\ \tilde \vee\ y := x \vee y \vee (x\otimes y)$. 

\end{proof}

\subsection*{\cref{lemma:mixrestrict}}

\begin{proof}
  Let $\hat \cX = U^{\rm c}_\pm(\cQ)$ for some Girard quantale $\cQ$. For $A (\otimes \times \tilde\vee) B$ to be closed (i.e. preserve containment satisfaction), we need $ \langle \texttt{a}_+,\texttt{a}_-\rangle (\otimes \times \tilde \vee)  \langle \texttt{b}_+,\texttt{b}_-\rangle$ to satisfy containment, i.e. $(\texttt{a}_+ \otimes  \texttt{b}_+) \otimes (\texttt{a}_-\ \tilde \vee\ \texttt{b}_-)$ to equal $0_\cQ$:
  
  \begin{align*}
     (\texttt{a}_+ \otimes  \texttt{b}_+) &\otimes (\texttt{a}_-\ \tilde \vee\ \texttt{b}_-) &&\text{} \\
     (\texttt{a}_+ \otimes  \texttt{b}_+) &\otimes (\texttt{a}_-  \vee \texttt{b}_- \vee (\texttt{a}_-\otimes \texttt{b}_-)) && \text{Defn of }\tilde\vee \\
     (\texttt{a}_+ \otimes  \texttt{b}_+ \otimes \texttt{a}_-) \vee (\texttt{a}_+ \otimes  \texttt{b}_+ \otimes\texttt{b}_-) &\vee (\texttt{a}_+ \otimes  \texttt{b}_+ \otimes\texttt{a}_-\otimes\texttt{b}_-) &&\otimes \text{ distributes over joins} \\
     (0_\cQ \otimes  \texttt{b}_+) \vee (\texttt{a}_+ \otimes  0_\cQ) &\vee (0_\cQ \otimes 0_\cQ) &&\langle \texttt{a}_+,\texttt{a}_-\rangle, \langle \texttt{b}_+,\texttt{b}_-\rangle\text{ satisfy containment} \\
     0_\cQ \vee 0_\cQ &\vee 0_\cQ &&0_\cQ\text{ is absorbing} \\
     &0_\cQ  &&\vee \text{ is idempotent} \\
  \end{align*}
  
\end{proof}

\end{document}